\newtheorem{theorem}{Theorem}[section]
\newtheorem{proposition}[theorem]{Proposition} 
\newtheorem{corollary}[theorem]{Corollary}
\newtheorem{lemma}[theorem]{Lemma}
\newtheorem{conjecture}[theorem]{Conjecture}
\newtheorem*{theorem*}{Theorem}
\newtheorem*{conjecture*}{Conjecture}
\newtheorem*{corollary*}{Corollary}
\newtheorem*{proposition*}{Proposition}
\newtheorem{problem}[theorem]{Problem}
\theoremstyle{definition}
\theoremstyle{remark}
\newcommand{\muint}{\mu_{\mathrm{int}}}
\definecolor{gruen}{cmyk}{1.0,0.2,0.7,0.07}
\definecolor{mag}{cmyk}{0.0,0.9,0.3,0.0}
\begin{document}

\title{Improper interval edge colorings of graphs}

\author{Carl Johan Casselgren, Petros A. Petrosyan}

\author{
%{\sl Armen S. Asratian}\thanks{{\it E-mail address:} 
%armen.asratian@liu.se}\\ 
%Department of Mathematics \\
%Link\"oping University \\ 
%SE-581 83 Link\"oping, Sweden
%%
%\and
{\sl Carl Johan Casselgren}\thanks{{\it E-mail address:} 
carl.johan.casselgren@liu.se. \, Casselgren was supported by a grant from the Swedish
Research Council (2017-05077)}\\ 
Department of Mathematics \\
Link\"oping University \\ 
SE-581 83 Link\"oping, Sweden
\and
{\sl Petros A. Petrosyan}\thanks{{\it E-mail address:} 
petros\_petrosyan@ysu.am} \\
Department of Informatics \\ and Applied Mathematics,\\
Yerevan State University \\ 0025, Armenia
}

\maketitle

% If you use the option headings,
% the title is also used as the running title,
% and the authors are also used as the running authors.
% You can change that by using \runtitle and \runauthor.

\begin{abstract}
A \emph{$k$-improper edge coloring} of a graph $G$ is a mapping 
$\alpha:E(G)\longrightarrow \mathbb{N}$ such that at most $k$ 
edges of $G$ with a common endpoint have the same color. 
An improper edge coloring of a graph $G$ is called an \emph{improper interval edge coloring} if the colors of the 
edges incident to each vertex of $G$ form an integral interval. 
In this paper we introduce and
investigate a new notion, the {\em interval coloring impropriety}
(or just {\em impropriety})
of a graph $G$ defined as the smallest $k$ such that $G$ has a $k$-improper interval edge coloring; 
we denote the smallest such $k$ by $\muint(G)$.
We prove upper bounds on $\muint(G)$ for 
general graphs $G$ and for particular families
such as bipartite, complete multipartite and outerplanar  
graphs;
we also determine $\muint(G)$ exactly for $G$ belonging to
some particular classes of graphs.
Furthermore,
we provide several families of graphs with large impropriety; in particular, we prove that for each positive integer $k$, 
there exists a graph $G$ with $\muint(G) =k$.
Finally, for graphs with at least two vertices we prove a
new upper bound on the number of colors used in an improper interval edge coloring.
\end{abstract}

	\section{Introduction}

A proper $t$-edge coloring of a graph $G$ is called an
\emph{interval $t$-coloring} if the colors of the edges incident to every
vertex $v$ of $G$ form an interval of integers.
 This notion was introduced by
Asratian and Kamalian \cite{AsrKam}
(available in English as \cite{AsrKamJCTB}),
motivated by the problem of constructing timetables
 without ``gaps'' for teachers and classes.
Generally, it is an NP-complete problem
to determine whether a bipartite graph
has an interval coloring \cite{Seva}. However some classes of
graphs have been proved to admit interval colorings; it is known,
for example, that trees, regular and complete bipartite graphs
\cite{AsrKam,Hansen,Kampreprint},  
bipartite graphs with maximum degree at most three \cite{Hansen},
doubly convex bipartite graphs
\cite{AsrDenHag,KamDiss}, grids \cite{GiaroKubale1}, 
and 
outerplanar bipartite graphs \cite{GiaroKubale2} have interval colorings. 
Additionally, all $(2,b)$-biregular graphs \cite{Hansen,HansonLotenToft,KamMir} and
 $(3,6)$-biregular graphs \cite{CarlJToft} admit interval colorings, where an
\emph{$(a,b)$-biregular} graph is a bipartite graph
where the vertices in one part all have degree $a$ and the vertices
in the other part all have degree $b$.
	
{\em Improper} (or {\em defective}) colorings was first considered
independently by
Andrews and Jacobson \cite{AndrewsJacobson}, Harary and Jones
\cite{HararyJones},
and Cowen et al. \cite{CowenGoddardJesurum}.
This coloring model is a well-known
generalization of ordinary graph coloring
with applications in various scheduling and assignment problems, see e.g.
the recent survey \cite{Wood}, or \cite{CowenGoddardJesurum}.
	 
	Motivated by scheduling and assignment problems with
	compactness requirements, but where a certain degree of conflict
	is acceptable,
	we consider {\em improper interval edge colorings} in this paper.
	An improper edge coloring of a graph is called an {\em improper interval
	(edge) coloring} if the colors on the edges incident with every vertex of
	the graph form a set of consecutive integers.
	This edge coloring model seems to have been first considered by
	Hudak et al. \cite{Hudak}, although their investigation
	has a different focus than ours.
	
	Note that unlike the case for interval colorings, every graph 
	trivially
	has an
	improper interval edge coloring.
	An improper interval coloring is {\em $k$-improper} if at most $k$ edges 
	with a common endpoint have the same color.
	We denote by $\muint(G)$ the smallest $k$ such that $G$
	has a $k$-improper interval edge coloring.
	The parameter $\muint(G)$ is called the 
	{\em interval coloring impropriety} (or just {\em impropriety}) of $G$.
	
	Improper interval edge colorings
	have immediate applications in scheduling problems,
	where an optimal schedule
	without waiting periods or idle times is desirable,
	but a certain level of conflict is allowed.
	For a bipartite graph $G$, representing a scheduling problem,
	the parameter $\muint(G)$ has a natural interpretation as the minimum
	degree of conflict necessary in a schedule with no waiting periods.
	Moreover, in view of the fact that not 
	every graph has an interval coloring,
	the parameter $\muint(G)$ may be viewed as a natural measure of how far from 
	being interval colorable a graph is. 
	
	Trivially, if $G$ has an interval coloring, then $\muint(G)=1$.
	In this paper,
	we provide several families of graphs with large impropriety;
	in particular, we prove that for each positive integer $k$,
	there is a graph $G$ with $\muint(G)=k$.
	
	We prove general upper bounds on $\muint(G)$
	and determine $\muint(G)$ exactly for some families of graphs $G$;
	in particular we prove that

	\begin{itemize}
	
		\item $\muint(G) \leq 2$ if $\Delta(G) \leq 5$,
		and $$\muint(G) \leq 
		\min\left\{2\left\lceil\frac{\Delta(G)}{\delta(G)}\right\rceil,
		\left\lceil\frac{\Delta(G)}{2}\right\rceil\right\},$$
		for any graph $G$ with $\Delta(G) \geq 6$,
	where $\Delta(G)$ and $\delta(G)$ denotes the maximum and minimum degree
	of a graph $G$, respectively;
	
	\item $\muint(G) \leq \left\lceil\frac{\Delta(G)}{4}\right\rceil$ 
	if $G$ is bipartite and has no vertices of degree three,
	and
	$$\muint(G) \leq \min\left\{\left\lceil\frac{\Delta(G)}{\delta(G)}\right\rceil,
		\left\lceil\frac{\Delta(G)}{3}\right\rceil\right\},$$
	for any bipartite graph $G$;
	
	\item $\muint(G) \leq \left\lceil\frac{r}{2}\right\rceil$ 
	if $G$ is a complete $r$-partite graph.
	
	\end{itemize}
	Furthermore, we conjecture that outerplanar graphs have impropriety at most
	$2$ and we prove this conjecture for graphs with maximum degree at most $8$.
	Finally, we consider the number of colors in an improper
	interval edge coloring and obtain a new upper bound on 
	the number of colors used in such a coloring.

	\section{Preliminaries}
	
	The degree of a vertex $v$ of a graph $G$ is denoted by $d_{G}(v)$.
	 $\Delta(G)$ and $\delta(G)$ denote the maximum and minimum
degrees of $G$, respectively. 
For two positive integers $a$ and $b$ with $a\leq b$, we denote by
$\left[a,b\right]$ the interval of integers $\left\{a,\ldots,b\right\}$. 
	
We shall need a classic result from factor theory.
A \emph{$2$-factor} of a multigraph $G$ (where loops are allowed) is a
$2$-regular spanning subgraph of $G$.
\begin{theorem} (Petersen's Theorem).
\label{th:Petersen}
 Let $G$ be a $2r$-regular
multigraph (where loops are allowed). Then $G$ has a decomposition
into edge-disjoint $2$-factors.
\end{theorem}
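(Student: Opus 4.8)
The plan is to use the classical Eulerian-orientation argument together with König's edge-colouring theorem for bipartite graphs. First I would reduce to the connected case: since the components of $G$ are edge-disjoint, it suffices to decompose each component separately, so assume $G$ is connected. Every vertex of $G$ has even degree $2r$ (a loop contributing $2$ to the degree of its vertex), hence $G$ admits an Eulerian circuit. Traversing such a circuit and orienting each edge in the direction of traversal turns $G$ into a directed multigraph $\vec{G}$ in which every vertex $v$ has in-degree and out-degree both equal to $r$: each passage of the circuit through $v$ uses exactly one incoming and one outgoing edge, and a loop at $v$ simply contributes one of each.

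Next I would pass to a bipartite auxiliary graph. Form $H$ with vertex set $\{v^{+}:v\in V(G)\}\cup\{v^{-}:v\in V(G)\}$, and for every arc $u\to v$ of $\vec{G}$ (including loops, where $u=v$) insert an edge joining $u^{+}$ and $v^{-}$. Then every $u^{+}$ has degree equal to the out-degree of $u$, and every $v^{-}$ has degree equal to the in-degree of $v$; thus $H$ is an $r$-regular bipartite multigraph.

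By König's theorem (equivalently, by repeatedly extracting perfect matchings, which every regular bipartite multigraph admits via Hall's condition), $H$ decomposes into $r$ edge-disjoint perfect matchings $M_{1},\dots,M_{r}$. I would then translate each $M_{i}$ back to $G$: an edge $u^{+}v^{-}\in M_{i}$ corresponds to the arc $u\to v$, i.e.\ to an edge of $G$, giving a subgraph $F_{i}$. Since $M_{i}$ is perfect, each vertex $u$ is covered exactly once at $u^{+}$ (one outgoing arc) and exactly once at $u^{-}$ (one incoming arc); forgetting orientations, every vertex of $G$ is incident with exactly two edges of $F_{i}$ (a loop accounting for both). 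Hence each $F_{i}$ is a $2$-factor, and since $M_{1},\dots,M_{r}$ partition $E(H)$, the factors $F_{1},\dots,F_{r}$ partition $E(G)$.

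The only delicate points, and the part I would treat most carefully, are the bookkeeping around loops and parallel edges: one must verify that the arc-to-edge correspondence is a bijection on $E(G)$ (so that distinct matchings yield edge-disjoint factors that together cover every edge), and that a loop $u\to u$ contributes exactly the single pair $u^{+}u^{-}$ in $H$ and hence degree $2$ at $u$ in whichever factor contains it. With the Eulerian orientation fixed first, these checks are routine, and the substantive content is carried entirely by König's theorem applied to the $r$-regular bipartite graph $H$.
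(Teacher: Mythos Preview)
Your argument is the standard, correct proof of Petersen's theorem via an Eulerian orientation and K\"onig's edge-colouring theorem on the associated bipartite double cover; the treatment of loops and multiple edges is handled properly by fixing the orientation first and keeping track of arcs rather than edges.

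Note, however, that the paper does not actually prove this statement: Theorem~\ref{th:Petersen} appears in the Preliminaries section as a classical result quoted without proof, so there is no ``paper's own proof'' to compare against. Your proposal supplies exactly the textbook argument one would expect for this result.
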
 
	
If $\alpha $ is an edge coloring of $G$ and $v\in V(G)$, then
$S_{G}\left(v,\alpha \right)$ (or $S\left(v,\alpha \right)$) denotes
the set of colors appearing on edges incident to $v$; 
the
smallest and largest colors of the spectrum $S\left(v,\alpha \right)$
are denoted by $\underline S\left(v,\alpha \right)$ and $\overline S\left(v,\alpha \right)$, respectively.\\

The {\em chromatic index $\chi'(G)$} of a graph $G$
	is the minimum number  $t$ for which there exists a proper
	$t$-edge coloring of $G$.

	\begin{theorem} (Vizing's Theorem)
\label{th:Vizing}
For any 
graph $G$, $\chi'(G) = \Delta(G)$
	or $\chi'(G) = \Delta(G) +1$. 
	\end{theorem}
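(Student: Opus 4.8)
The lower bound $\chi'(G) \ge \Delta(G)$ is immediate: at a vertex $v$ with $d_G(v) = \Delta(G)$, the $\Delta(G)$ incident edges must receive pairwise distinct colors in any proper edge coloring. The substance of the theorem is the upper bound $\chi'(G) \le \Delta(G) + 1$, and the plan is to prove it by induction on $|E(G)|$. The base case $|E(G)| = 0$ is trivial, so assume the bound holds for all graphs with fewer edges, fix an edge $e = xy_0$, and apply the inductive hypothesis to $G - e$ to obtain a proper edge coloring $\alpha$ of $G - e$ using the palette $[1, \Delta(G)+1]$. It then suffices to show that $\alpha$ can be modified so that $e$ can also be colored. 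The key observation enabling this is that since every vertex has degree at most $\Delta(G)$ while $\Delta(G)+1$ colors are available, \emph{every} vertex misses at least one color under $\alpha$; I will call such a color \emph{missing} at that vertex.

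First I would set up the \emph{Vizing fan} at $x$: a maximal sequence $y_0, y_1, \ldots, y_k$ of distinct neighbors of $x$ such that, for each $i \ge 1$, the edge $xy_i$ carries a color that is missing at $y_{i-1}$. Such a sequence is constructed greedily, extending it as long as possible. One checks that the fan may be \emph{rotated}: recoloring each edge $xy_j$ with the color of $xy_{j+1}$ for $j<i$ leaves the coloring proper (each new color at $y_j$ was missing there by the defining property of the fan, and the multiset of colors at $x$ is unchanged), and the effect is to shift the uncolored edge from $xy_0$ to $xy_i$.

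The heart of the argument is a case analysis handled by \emph{Kempe chains}, that is, maximal paths whose edges alternate between two fixed colors. Let $a$ be a color missing at $x$ and, for each $i$, let $\gamma_i$ be a color missing at $y_i$. If some $\gamma_i$ is \emph{also} missing at $x$, then rotating the fan to leave $xy_i$ uncolored and assigning that edge the common missing color extends the coloring, and we are done. Otherwise, consider the maximal path $P$ starting at $x$ that alternates between $a$ and a suitable missing color of a fan vertex; interchanging the two colors along $P$ alters the set of missing colors at the far endpoint of $P$ without creating a conflict elsewhere, and after this swap (possibly combined with a rotation of the fan) the situation is reduced to the first case, freeing a color at $x$ for $e$.

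The main obstacle is the bookkeeping in this last step: one must argue that the fan vertex reached (or not reached) by the Kempe chain and the vertex whose missing color is altered by the color interchange do not interfere, so that after recoloring a color genuinely becomes available at $x$ for the edge $e$. This is exactly where the distinctness of the fan vertices and the maximality of the fan are used, and it requires splitting into subcases according to whether the alternating path from $x$ terminates at the last fan vertex, at an earlier fan vertex, or elsewhere — the delicate part of Vizing's original argument that I would carry out with care.
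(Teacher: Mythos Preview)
The paper does not supply a proof of this statement: Vizing's Theorem is listed in the Preliminaries section as a classical result and is used later without justification. So there is no ``paper's own proof'' to compare your attempt against.

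That said, your outline is the standard Vizing fan argument and is on the right track. The lower bound, the induction on $|E(G)|$, the construction and rotation of the fan, and the use of Kempe chains are all correct in spirit. Your sketch becomes vague precisely where the real work lies: in the final paragraph you say you would split into subcases according to where the $(a,\gamma_k)$-alternating path from $y_k$ (not from $x$, which is the more common and cleaner formulation) terminates, but you do not actually carry out any of these subcases. In particular, the crucial observation that if the path ends at some earlier fan vertex $y_j$ then $\gamma_k$ must equal the color $\alpha(xy_{j+1})$ missing at $y_j$, and hence the maximality of the fan forces $y_{j+1}$ to already lie in the fan, is what makes the argument close; your proposal gestures at ``distinctness of the fan vertices and the maximality of the fan'' without pinning this down. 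As written it is an accurate roadmap rather than a proof, but since the paper itself only cites the theorem, no more is needed here.
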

	A graph $G$  is said to be \emph{Class $1$} if $\chi'(G) = \Delta(G)$, and  
	\emph{Class $2$}
	 if $\chi'(G) = \Delta(G) +1$.
The next result gives a sufficient condition for a graph to be Class 1 (see, for example,  \cite{Fournier}).

\begin{theorem}
\label{th:Fournier}
	If $G$ is a graph where no two vertices of maximum degree are adjacent,
	then $G$ is Class 1.
\end{theorem}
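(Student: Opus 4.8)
The plan is to prove the statement by induction on the number of edges, using Vizing's Theorem (Theorem~\ref{th:Vizing}) to dispose of degenerate cases. Write $\Delta = \Delta(G)$ and let $D$ denote the set of vertices of degree $\Delta$; by hypothesis $D$ is an independent set. The base case (no edges) is trivial, so assume $G$ has an edge and fix a vertex $v \in D$ together with a neighbor $u$ of $v$. Since $D$ is independent, $u \notin D$, so $d_G(u) \le \Delta - 1$. I would delete the edge $uv$ and set $G' = G - uv$, in which $d_{G'}(u) \le \Delta - 2$ and $d_{G'}(v) = \Delta - 1$, while all other degrees are unchanged.

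There are two cases for $G'$. If $|D| \ge 2$, then some vertex other than $v$ still has degree $\Delta$, so $\Delta(G') = \Delta$, and the vertices of maximum degree in $G'$ are exactly those of $D \setminus \{v\}$ (both $u$ and $v$ now have degree below $\Delta$, while every other vertex keeps its degree). As a subset of $D$ this set is independent, so $G'$ satisfies the hypothesis with fewer edges, and the induction hypothesis gives $\chi'(G') = \Delta$. If instead $D = \{v\}$, then $\Delta(G') = \Delta - 1$, and Vizing's Theorem yields $\chi'(G') \le \Delta(G') + 1 = \Delta$. In either case $G'$ admits a proper edge coloring $\varphi$ with colors from $[1,\Delta]$, and it remains to extend $\varphi$ to the edge $uv$ without introducing a new color.

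Here the independence hypothesis pays off: since $d_{G'}(u) \le \Delta - 2$, the vertex $u$ is missing at least two colors under $\varphi$, while $v$ is missing at least one. Let $M(u)$ and $M(v)$ be the corresponding sets of missing colors. If $M(u) \cap M(v) \ne \emptyset$, I color $uv$ with a common missing color and the coloring is complete. Otherwise I would invoke the standard alternating-path (Kempe chain) recoloring: picking $\beta \in M(v)$ and a color $\alpha \in M(u)$, consider the maximal $\alpha\beta$-alternating path starting at $v$; if it avoids $u$, then swapping colors along it makes $\alpha$ missing at both $u$ and $v$, after which $uv$ can receive color $\alpha$. The delicate situation is when this path reaches $u$; this is exactly where having a \emph{second} free color at $u$ is used, to reroute the recoloring (in the manner of Vizing's fan argument) so that a color common to $u$ and $v$ is ultimately liberated.

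I expect this extension step to be the main obstacle: one must argue that the recoloring process always terminates and frees a common color rather than cycling. The cleanest way to organize it is via a maximal Vizing fan rooted at the uncolored edge $uv$, rotating the fan when two missing colors coincide and otherwise performing a single Kempe swap. The role of the hypothesis that no two vertices of maximum degree are adjacent is precisely to guarantee that the endpoint $u$ of the deleted edge is non-maximal and hence retains a spare color throughout the process; this is what allows the entire argument to go through with only $\Delta$ colors rather than $\Delta + 1$.
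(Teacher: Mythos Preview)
The paper does not prove Theorem~\ref{th:Fournier}; it is quoted as a known preliminary result with a reference to Fournier's original paper. So there is no in-paper argument to compare your proposal against.

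That said, your inductive setup is the standard one and is fine: delete an edge $uv$ with $v$ of maximum degree and $u$ a neighbor (hence $d_G(u)\le\Delta-1$), obtain a proper $\Delta$-edge-coloring of $G-uv$ by induction or by Vizing, and then extend across $uv$. The gap is in how you justify the extension. You locate the crux in the fact that $u$ misses \emph{two} colors and propose to ``reroute'' a Kempe chain using that spare color. But two missing colors at $u$ alone do not suffice: one can build examples with $\Delta=4$ in which both the $\alpha_1\beta$- and the $\alpha_2\beta$-chains from $v$ terminate at $u$ (they share the unique $\beta$-edge at $u$ and then diverge), so the simple ``try the other missing color'' trick loops. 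More structurally, a Vizing fan needs every leaf to miss a color; if you center the fan at $u$, the leaves are neighbors of $u$, which may well have degree $\Delta$ in $G-uv$ and hence miss nothing.

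The correct use of the hypothesis is stronger than what you state: since $v\in D$ and $D$ is independent, \emph{every} neighbor of $v$ has degree at most $\Delta-1$ in $G$, hence also in $G-uv$, and therefore misses at least one color in any $\Delta$-edge-coloring. This is exactly the condition that makes the Vizing fan centered at $v$ (with leaves $u=u_0,u_1,\dots$ among the neighbors of $v$) run to completion using only $\Delta$ colors, just as in the proof of Vizing's theorem with $\Delta+1$ colors. Once you center the fan at $v$ and invoke this observation, the standard rotation/Kempe-swap dichotomy goes through verbatim. So your outline becomes a correct proof after this correction; the role of the hypothesis is not that a \emph{single} neighbor $u$ is light, but that \emph{all} neighbors of the chosen maximum-degree vertex are.
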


Every bipartite graph is Class 1, as the following well-known
proposition, known as K\"onig's edge coloring theorem, states.

\begin{theorem} (K\"onig's edge coloring theorem)
\label{th:Konig}
	If $G$ is bipartite, then $\chi'(G) = \Delta(G)$.
\end{theorem}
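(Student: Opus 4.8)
The lower bound $\chi'(G) \geq \Delta(G)$ is immediate, since the $\Delta(G)$ edges incident to a vertex of maximum degree must receive pairwise distinct colors. The substance of the statement is therefore the reverse inequality $\chi'(G) \leq \Delta(G)$, which I would establish by induction on the number of edges of $G$, the base case being trivial.

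For the inductive step, write $\Delta = \Delta(G)$, choose an arbitrary edge $uv$, and apply the induction hypothesis to $G - uv$ to obtain a proper edge coloring using colors from $[1,\Delta]$. Since $d_{G-uv}(u) \leq \Delta - 1$ and $d_{G-uv}(v) \leq \Delta - 1$, there is a color $a$ missing at $u$ and a color $b$ missing at $v$. If $a = b$, I assign that color to $uv$ and the step is complete. Otherwise, consider the subgraph $H$ consisting of all edges colored $a$ or $b$; since the coloring is proper, every vertex is incident to at most one edge of each of these colors, so $H$ is a disjoint union of paths and cycles. Let $P$ be the component of $H$ containing $u$. As $a$ is missing at $u$, the vertex $u$ has degree at most one in $H$, so $u$ is an endpoint of a path whose edges alternate between $b$ and $a$, beginning with a $b$-edge (if $u$ is isolated in $H$, then $b$ is also missing at $u$ and coloring $uv$ with $b$ finishes immediately).

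The crux is to show that $P$ does not end at $v$, and this is exactly where bipartiteness enters. Since $uv \in E(G)$ and $G$ is bipartite, $u$ and $v$ lie in different parts, so any path from $u$ to $v$ has odd length. An alternating path that starts at $u$ with a $b$-edge and has odd length terminates with a $b$-edge; if this endpoint were $v$, then $v$ would be incident to an edge colored $b$, contradicting that $b$ is missing at $v$. Hence $v \notin V(P)$. I would then interchange the colors $a$ and $b$ along $P$: this keeps the coloring proper, leaves all colors at $v$ untouched, and makes $b$ missing at $u$. Now $b$ is missing at both $u$ and $v$, so coloring $uv$ with $b$ yields a proper $\Delta$-coloring of $G$, completing the induction.

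The main obstacle, and the only place the hypothesis is used, is the parity argument ruling out that the $ab$-alternating Kempe chain from $u$ reaches $v$; in a general graph this chain may have even length and end at $v$, which is precisely why the analogous bound fails and Vizing's Theorem (Theorem~\ref{th:Vizing}) gives only $\Delta+1$. An alternative route would be to embed $G$ into a $\Delta$-regular bipartite graph and then repeatedly peel off perfect matchings, each guaranteed by Hall's (equivalently K\"onig's) matching theorem, assigning a distinct color to each matching; I would nonetheless regard the Kempe-chain induction as the cleaner self-contained option.
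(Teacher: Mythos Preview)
The paper does not give a proof of this statement: K\"onig's edge coloring theorem is quoted in the preliminaries as a well-known result and used without argument. Your Kempe-chain induction is the standard textbook proof and is correct; the parity argument showing the $ab$-alternating path from $u$ cannot end at $v$ is exactly the right use of bipartiteness, and the alternative via embedding into a $\Delta$-regular bipartite graph and peeling off perfect matchings is likewise a well-known valid route.
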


We shall also need some preliminary results on interval edge coloring.
The following was proved by Hansen \cite{Hansen}.

\begin{theorem}
\label{th:Hansen}
	If $G$ is a bipartite graph with maximum degree $\Delta(G)\leq 3$,
	then $G$ has an interval coloring.
\end{theorem}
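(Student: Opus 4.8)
The plan is to dispose of the easy regimes first and then reduce the maximum degree by deleting a single matching. Since the components of $G$ can be colored independently, I may assume $G$ is connected. If $\Delta(G)\le 2$ then $G$ is a path or an even cycle (recall $G$ is bipartite, so every cycle is even), and coloring the edges alternately with $1$ and $2$ (or consecutively $1,2,3,\dots$ along a path) is an interval coloring; so I may assume $\Delta(G)=3$.

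The key reduction is to delete a matching that meets every vertex of degree $3$. Let $D_3$ denote the set of vertices of degree $3$ and let $X,Y$ be the parts of $G$. For any $S\subseteq D_3\cap X$ there are exactly $3|S|$ edges leaving $S$, and since every vertex of $Y$ has degree at most $3$ these edges meet at least $|S|$ vertices; hence $|N(S)|\ge |S|$ and Hall's condition holds. Thus $G$ has a matching saturating $D_3\cap X$, and symmetrically one saturating $D_3\cap Y$; combining them by a standard symmetric-difference argument (the Mendelsohn--Dulmage lemma) yields a single matching $M$ saturating all of $D_3$. Then $G'=G-M$ satisfies $\Delta(G')\le 2$, so $G'$ is a disjoint union of paths and even cycles, and every vertex of $D_3$ has degree exactly $2$ in $G'$.

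It remains to interval-color $G'$ and extend the coloring across $M$. Here I would use that the edges of a path-component may be colored by any sequence of integers whose consecutive terms differ by $\pm 1$ (and an even cycle alternately by two consecutive colors), so that at each interior vertex the two incident colors form an interval; this leaves a free choice of ``offset'' for each component. Each edge $uv\in M$ must then receive a color consecutive with the interval already present at $u$ and at $v$; since each such edge has an endpoint in $D_3$ whose two $G'$-edges already form a pair $\{k,k+1\}$, its color is forced to equal $k-1$ or $k+2$ there, and one must choose the offsets of the components so that all these forced values are mutually compatible. I expect this last coordination step --- propagating a coherent choice of offsets across the components linked by the edges of $M$, and verifying that the parity constraints around any cycle of such dependencies can always be met --- to be the main obstacle, and it is essentially the technical core of the argument.
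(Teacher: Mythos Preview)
The paper does not prove this theorem at all: it is quoted as a result of Hansen (an unpublished thesis) and used as a black box elsewhere. So there is no ``paper's own proof'' to compare against; I can only assess your proposal on its own merits.

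Your outline is the natural one --- strip off a matching covering the degree-$3$ vertices and then repair --- but as you yourself say, the ``coordination step'' at the end is the whole content of the proof, and you have not carried it out. This is not a minor technicality. Concretely: if $uv\in M$ with $u,v\in D_3$, then in $G'$ each of $u,v$ sees a pair $\{a,a+1\}$ and $\{b,b+1\}$, and the $M$-edge must receive a color in $\{a-1,a+2\}\cap\{b-1,b+2\}$, which is nonempty only when $|a-b|\in\{0,3\}$. Thus every $M$-edge imposes a congruence-type constraint between the offsets (and, for path-components, also the up/down pattern) of the two $G'$-components it joins. These constraints propagate around cycles in the auxiliary graph obtained by contracting the components of $G'$, and you have given no reason why they are always simultaneously satisfiable; the phrase ``parity constraints around any cycle of such dependencies can always be met'' is exactly the lemma that needs to be stated and proved. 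Without it the argument is a plan, not a proof.

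A second, smaller point: your Hall-type argument for a matching saturating $D_3\cap X$ is correct, but the ``standard symmetric-difference'' step to merge the two matchings into one saturating all of $D_3$ (Mendelsohn--Dulmage) should be spelled out, since some readers may not recognize it under that name. Alternatively one can argue directly via K\"onig's edge-coloring theorem: any color class in a proper $3$-edge-coloring of a $3$-regular bipartite supergraph restricts to a matching of $G$ covering $D_3$.
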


	\section{Improper interval edge colorings of some 
	non-interval-colorable graphs}

	In this section we determine the impropriety of some well-known families
	of graphs that in general do not admit interval colorings; in particular
	we describe constructions of bipartite graphs
	with arbitrarily large impropriety.
	
		\subsection{The impropriety of some non-interval-colorable graphs}
		
		Regular Class 1 graphs are trivially interval colorable,
		while no Class 2 graphs are \cite{AsrKam,Axen,GiaroKubaleMalaf2};
		however, all regular graphs have small impropriety.

		\begin{proposition}
		\label{prop:WheelReg}
			If $G$ is a regular graph, then 
			$$\muint(G) =
			\begin{cases}
				1, \text{ if $G$ is Class 1}, \\
				2, \text{ if $G$ is Class 2.}
			\end{cases}
			$$
		\end{proposition}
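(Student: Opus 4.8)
The plan is to treat the two cases of the statement separately, using Petersen's Theorem (Theorem~\ref{th:Petersen}) as the main tool for building $2$-improper interval colorings. First I would dispose of the Class $1$ case, which is immediate. If $G$ is $r$-regular and Class $1$, then $\chi'(G) = r$, so $G$ admits a proper $r$-edge-coloring $\alpha$ with colors from $[1,r]$. Since $G$ is $r$-regular and $\alpha$ is proper, the $r$ edges at each vertex $v$ receive $r$ pairwise distinct colors from $[1,r]$; hence $S(v,\alpha) = [1,r]$ at every vertex. Thus $\alpha$ is an interval coloring and in particular a $1$-improper interval coloring, so $\muint(G) = 1$.

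For the Class $2$ case I would first establish the lower bound $\muint(G) \ge 2$. Observe that $\muint(G) = 1$ holds if and only if $G$ has an interval coloring, since a $1$-improper coloring is exactly a proper coloring. As recorded earlier in the paper, a regular graph admitting an interval coloring is necessarily Class $1$: at each vertex the interval $S(v,\alpha)$ consists of $r$ consecutive colors, so reducing $\alpha$ modulo $r$ turns this interval into a full set of residues and yields a proper $r$-edge-coloring. Consequently a regular Class $2$ graph has no interval coloring, and $\muint(G) \ge 2$.

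The substance of the argument is the matching upper bound $\muint(G) \le 2$, which I would prove for every $r$-regular graph by a $2$-factorization. If $r = 2m$ is even, Theorem~\ref{th:Petersen} decomposes $G$ into edge-disjoint $2$-factors $F_1, \dots, F_m$; coloring every edge of $F_i$ with color $i$ makes each color appear exactly twice at each vertex, with spectrum $[1,m]$ everywhere, so this is a $2$-improper interval coloring. If $r = 2m+1$ is odd, then $|V(G)|$ is even by the handshake lemma, so I can adjoin a perfect matching $M$ of brand-new (possibly parallel) edges to form the $(2m+2)$-regular multigraph $H = G + M$. Applying Theorem~\ref{th:Petersen} to $H$ produces $2$-factors $F_1, \dots, F_{m+1}$; I color each $F_i$ with color $i$ and then delete the edges of $M$. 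In $H$ every color appears exactly twice at each vertex, and deleting the single matching edge at a vertex $v$ lowers exactly one color from two occurrences to one while leaving all of $[1,m+1]$ present. The resulting coloring of $G$ is therefore a $2$-improper interval coloring, which together with the lower bound gives $\muint(G) = 2$.

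The main obstacle is the odd-degree case: a genuine perfect matching of $G$ need not exist, since an odd-regular graph can fail to have one, so one cannot simply remove a matching and $2$-factorize the remaining $2m$-regular graph. The device that circumvents this is to attach a \emph{fictitious} perfect matching to equalize the degrees before invoking Petersen's Theorem, and then to check that deleting these auxiliary edges afterwards destroys neither the interval property nor the bound of two edges per color at each vertex.
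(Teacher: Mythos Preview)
Your proof is correct and follows essentially the same strategy as the paper: cite the known equivalence between interval colorability and Class~1 for regular graphs, then use Petersen's theorem on a $2$-factorization to produce a $2$-improper interval coloring. The only difference is in the odd-degree step: the paper takes two copies $G_1,G_2$ of $G$ and joins corresponding vertices to obtain a simple $2k$-regular supergraph, then restricts the resulting coloring to $G_1$; you instead stay on $V(G)$ and add an arbitrary perfect matching (allowing parallel edges) to reach a $(2m{+}2)$-regular multigraph before deleting the auxiliary edges. Both devices exploit the same observation---removing a single matching edge at each vertex drops one color's multiplicity from two to one without destroying the interval $[1,m{+}1]$---so the arguments are interchangeable; yours avoids doubling the vertex set at the cost of working in multigraphs, which Theorem~\ref{th:Petersen} permits anyway.
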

		\begin{proof}
	  Let $G$ be a regular graph. It is well-known
		that $G$ is interval colorable if and only if $G$ is Class 1.
		Hence, it suffices to prove that $\muint(G) \leq 2$;
		we shall give an explicit $2$-improper interval coloring
		of $G$.
		
		Suppose first that the vertex degrees of $G$ are even, say $d_G(v) = 2k$
		for every vertex $v \in V(G)$. By Petersen's theorem
		$G$ has a decomposition into $2$-factors $F_1,\dots,F_k$. By coloring
		all edges of $F_i$ by color $i$, $i=1,\dots,k$, we obtain a $2$-improper
		interval coloring of $G$.
		
		Suppose now that $d_G(v) =2k-1$ for all $v \in V(G)$.
		By taking two copies $G_1$ and $G_2$
		of $G$ and adding an edge between corresponding vertices of 
		$G_1$ and $G_2$,
		we obtain a $2k$-regular supergraph $H$. By the preceding paragraph,
		$H$ has a $2$-improper interval coloring. By taking the restriction of
		this coloring to $G_1$, it follows that $\muint(G) \leq 2$.		
		\end{proof}
		
		Note that Proposition \ref{prop:WheelReg} implies that
		for %simple 
		cycles $C_n$ ($n\geq 3$) and complete graphs $K_n$ it holds that
		$$\muint(C_n) = \muint(K_n) =
			\begin{cases}
					1, \text{ if $n$ is even}, \\
					2, \text{ if $n$ is odd.}
			\end{cases}
		$$
		
	\begin{figure}[h]
	\begin{center}
		\includegraphics[width=20pc]{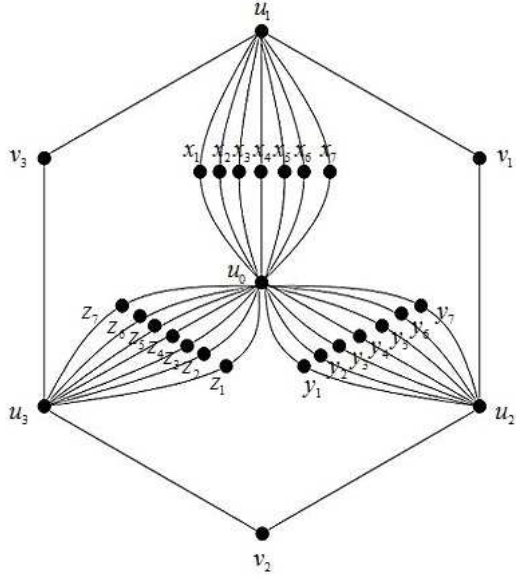}\\
		\caption{The graph $S_{7,7,7}$.}\label{fig1}
	\end{center}
	\end{figure}

Next, we consider generalizations of
two families of bipartite graphs with
no interval colorings introduced by Giaro et al. \cite{GiaroKubaleMalaf1}. 
For any $a,b,c\in \mathbb{N}$, define the graph $S_{a,b,c}$ as follows:
\begin{center}
$V(S_{a,b,c})=\{u_{0},u_{1},u_{2},u_{3},v_{1},v_{2},v_{3}\}\cup \{x_{1},\ldots, x_{a},y_{1},\ldots, y_{b},z_{1},\ldots,z_{c}\}$ and

\medskip

$E(S_{a,b,c})=\{u_{1}v_{1},v_{1}u_{2},u_{2}v_{2},v_{2}u_{3},u_{3}v_{3},v_{3}u_{1}\}\cup \{u_{0}x_{i},u_{1}x_{i}:1\leq i\leq a\}$\\
$\cup\{u_{0}y_{j},u_{2}y_{j}:1\leq j\leq b\}\cup\{u_{0}z_{k},u_{3}z_{k}:1\leq k\leq c\}$.
\end{center}
Figure \ref{fig1} shows the graph $S_{7,7,7}$.

Next we define a family of graphs $M_{a,b,c}$  ($a,b,c\in \mathbb{N}$).
We set
\begin{center}
$V(M_{a,b,c})=\{u_{0},u_{1},u_{2},u_{3}\}\cup \{x_{1},\ldots, x_{a},y_{1},\ldots, y_{b},z_{1},\ldots,z_{c}\}$ and 

$E(M_{a,b,c})=\{u_{0}x_{i},u_{1}x_{i},u_{2}x_{i}:1\leq i\leq a\}\cup\{u_{0}y_{j},u_{2}y_{j},u_{3}y_{j}:1\leq j\leq b\}$\\
$\cup\{u_{0}z_{k},u_{3}z_{k},u_{1}z_{k}:1\leq k\leq c\}$.
\end{center}
Figure \ref{fig2} shows the graph $M_{5,5,5}$.

Clearly, $S_{a,b,c}$ and $M_{a,b,c}$ are connected bipartite graphs. 
Giaro et al.  \cite{GiaroKubaleMalaf1} showed that 
the graphs $S_k=S_{k,k,k}$ and $M_l=M_{l,l,l}$ 
do not admit interval colorings if $k\geq 7$, and $l \geq 5$, respectively.
 
Here we shall prove that all graphs in the families 
$\{S_{a,b,c}\}$ and $\{M_{a,b,c}\}$ satisfy that $\muint(S_{a,b,c}) \leq 2$ and $\muint(M_{a,b,c}) \leq 2$, respectively.

\begin{figure}[h]
\begin{center}
\includegraphics[width=25pc]{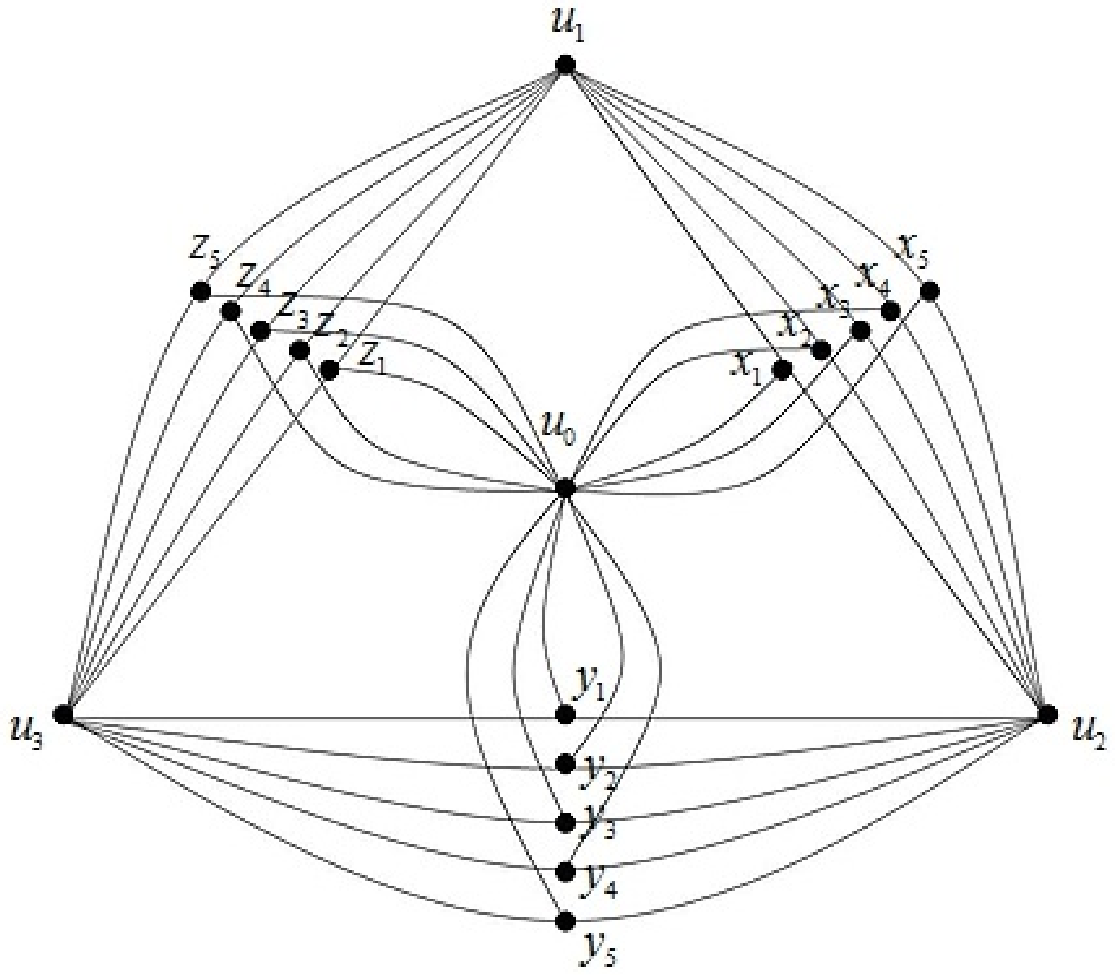}\\
\caption{The graph $M_{5,5,5}$.}\label{fig2}
\end{center}
\end{figure}

\begin{theorem}\label{rosettes}
For any $a,b,c\in \mathbb{N}$, $\muint(S_{a,b,c}) \leq 2$ and 
$\muint(M_{a,b,c}) \leq 2$.
\end{theorem}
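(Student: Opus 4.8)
The plan is to construct explicit $2$-improper interval colorings for each of the two families, exploiting the highly symmetric structure of these graphs. For the family $S_{a,b,c}$, note that the vertex $u_0$ is the high-degree hub, incident to all of the $x_i, y_j, z_k$ (degree $a+b+c$), while each $u_1,u_2,u_3$ is incident to its two cycle-edges plus the pendant-like attachments to one of the three groups. The first step is to assign colors to the edges at $u_0$: since $u_0$ sees $a+b+c$ edges and we are allowed $2$ edges per color at a vertex, I would use roughly $\lceil (a+b+c)/2 \rceil$ consecutive colors at $u_0$, grouping the edges in pairs so the spectrum $S(u_0,\alpha)$ is an interval. The delicate part is that the \emph{same} edge $u_0 x_i$ also touches $x_i$, which has degree $2$ (its other edge is $u_1 x_i$), so whatever color-interval I build at $u_0$ must be reconcilable, through the degree-$2$ vertices, with an interval at each $u_1,u_2,u_3$.

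Concretely, I would arrange the colors so that the three blocks of edges ($x$-block, $y$-block, $z$-block) at $u_0$ occupy three consecutive color-intervals, and then color the mirror edges $u_1 x_i$, $u_2 y_j$, $u_3 z_k$ so that at each $u_s$ the attachment-colors again form an interval; the two cycle-edges at each $u_s$ are then colored to extend that interval by one on each side (or to fill a single gap), keeping the per-color multiplicity at each vertex at most $2$. Because each $x_i,y_j,z_k$ has degree exactly $2$, its spectrum is automatically an interval precisely when the two colors it receives are equal or consecutive, which is a mild constraint I can satisfy by choosing the pairing at $u_0$ and at $u_s$ compatibly. The cycle $u_1 v_1 u_2 v_2 u_3 v_3$ is a $6$-cycle on the degree-$2$ vertices $v_1,v_2,v_3$ together with $u_1,u_2,u_3$; since a $6$-cycle is even it has an interval ($\muint=1$) coloring, and the real work is to dovetail that cyclic coloring with the interval-constraints coming from the attachments at $u_1,u_2,u_3$.

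For the family $M_{a,b,c}$ the structure is cleaner: there are only four hub vertices $u_0,u_1,u_2,u_3$ and no separate $6$-cycle, each $x_i$ joining $u_0,u_1,u_2$, each $y_j$ joining $u_0,u_2,u_3$, and each $z_k$ joining $u_0,u_3,u_1$, so every $x_i,y_j,z_k$ has degree $3$ and must receive three consecutive colors. I would again lay out the edges at $u_0$ in three consecutive blocks and then propagate: for each triangle-type vertex assign the triple of consecutive colors, distributing them across its three hub-endpoints so that at each $u_s$ the accumulated colors form an interval with multiplicity at most $2$. The main obstacle in both cases is this simultaneous interval-and-multiplicity bookkeeping at the degree-$(a+b+c)$ hub $u_0$ together with the three secondary hubs: I must choose the offsets between the three color-blocks and the internal pairing so that \emph{all four} (respectively all four) high-degree spectra are intervals at once, which forces a careful case analysis on the parities of $a,b,c$ and on how the blocks overlap. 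I expect that a uniform rule — pairing consecutive edges within each block and shifting block $y$ and block $z$ by fixed amounts relative to block $x$ — works, but verifying the interval property and the $2$-improper bound at $u_1,u_2,u_3$ in the boundary cases where $a,b,c$ are small or of mixed parity will be the step requiring the most care.
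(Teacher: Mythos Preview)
Your high-level plan---an explicit construction arranged in blocks---is exactly the right idea, and is what the paper does. But the specific layout you propose for $S_{a,b,c}$ will not close. If the $x$-, $y$-, and $z$-blocks at $u_0$ occupy three \emph{consecutive} (hence essentially disjoint) colour-intervals, with edges paired inside each block, then the mirror edges force the attachment interval at $u_1$ to sit near $[1,\lceil a/2\rceil]$ while that at $u_3$ sits near $[\lceil a/2\rceil+\lceil b/2\rceil+1,\,\dots]$. The degree-$2$ vertex $v_3$ joins $u_1$ to $u_3$ and can only receive two equal or consecutive colours; the two cycle-edges at $u_1$ and the two at $u_3$ can each extend their hub's interval by at most one step. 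Once $b$ is even moderately large this gap cannot be bridged, so the $6$-cycle cannot connect three widely separated intervals. The parity case analysis you anticipate is a symptom of this layout, not the real obstacle.

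The paper's construction sidesteps the problem by letting the blocks at $u_0$ \emph{overlap} rather than abut: assuming $a\le b\le c$, one sets $\alpha(u_0x_i)=\alpha(u_1x_i)=i$, $\alpha(u_0y_j)=\alpha(u_2y_j)=j$, and $\alpha(u_0z_k)=\alpha(u_3z_k)=a+k$. Now the three secondary hubs carry the intervals $[1,a]$, $[1,b]$, $[a+1,a+c]$, which are pairwise overlapping or adjacent, and the six cycle-edges bridge them with single-step extensions (e.g.\ $\alpha(u_3v_3)=a$, $\alpha(v_3u_1)=a+1$). Every degree-$2$ vertex $x_i,y_j,z_k$ receives the \emph{same} colour on both its edges, so there is no parity bookkeeping whatsoever: one uniform formula covers all $a,b,c$. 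The same overlap trick handles $M_{a,b,c}$, and there each degree-$3$ vertex receives only two distinct colours (one of them repeated), not three as you assumed.
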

\begin{proof}
Without loss of generality, we may assume that $a\leq b\leq c$.

We first construct an edge coloring $\alpha$ of the graph $S_{a,b,c}$. We define this coloring as follows:

\begin{description}
\item[($1$)] for $1\leq i\leq a$, let
$\alpha \left(u_{0}x_{i}\right)=\alpha \left(u_{1}x_{i}\right)=i$;

\item[($2$)] for $1\leq j\leq b$, let
$\alpha \left(u_{0}y_{j}\right)=\alpha \left(u_{2}y_{j}\right)=j$;

\item[($3$)] for $1\leq k\leq c$, let
$\alpha \left(u_{0}z_{k}\right)=\alpha \left(u_{3}z_{k}\right)=a+k$;

\item[($4$)] $\alpha \left(u_{1}v_{1}\right)=\alpha \left(v_{1}u_{2}\right)=1,\alpha \left(u_{2}v_{2}\right)=b+1,\alpha \left(v_{2}u_{3}\right)=b+2,$\
$\alpha \left(u_{3}v_{3}\right)=a,\alpha \left(v_{3}u_{1}\right)=a+1$.
\end{description}

It is straightforward that $\alpha$ is a $2$-improper interval coloring of $S_{a,b,c}$.

Next we define an edge coloring $\beta$ of the graph $M_{a,b,c}$ as follows: 

\begin{description}
\item[($1^{\prime}$)] for $1\leq i\leq a$, let
$\beta \left(u_{0}x_{i}\right)=i$;

\item[($2^{\prime}$)] for $1\leq i\leq a$, let
$\beta \left(u_{1}x_{i}\right)=\beta \left(u_{2}x_{i}\right)=i+1$;

\item[($3^{\prime}$)] for $1\leq j\leq b$, let
$\beta \left(u_{0}y_{j}\right)=j$;

\item[($4^{\prime}$)] for $1\leq j\leq b$, let
$\beta \left(u_{2}y_{j}\right)=\beta \left(u_{3}y_{j}\right)=j+1$;

\item[($5^{\prime}$)] for $1\leq k\leq c$, let
$\beta \left(u_{0}z_{k}\right)=a+k$;

\item[($6^{\prime}$)] for $1\leq k\leq c$, let
$\beta \left(u_{3}z_{k}\right)=\beta \left(u_{1}z_{k}\right)=a+k+1$.
\end{description}
It is easy to verify that $\beta$ is a $2$-improper interval coloring of $M_{a,b,c}$. 
%
%Fig.** shows the cyclic interval $10$-coloring $\beta$ of $\Delta_{2,3,4}$. 
%
We conclude that $\muint(S_{a,b,c}) \leq 2$ and $\muint(M_{a,b,c}) \leq 2$. 
~$\square$
\end{proof}

\bigskip

Lastly, let us consider two elementary classes of graphs
that have been proved not to always admit interval colorings.
Recall that a wheel graph $W_{n}$ on $n$ vertices  ($n\geq 4$) 
is defined as %a graph that can be obtained by taking 
the	join of $C_{n-1}$ and $K_{1}$.
It is well-known that only few wheels are interval colorable, but they
all have small impropriety (which in fact is implicit in \cite{Hudak}).

\begin{proposition}	
If $W_{n}$ is a wheel graph on $n$ vertices, then
			$$\muint(W_{n}) =
			\begin{cases}
				1, \text{ if $n=4,7,10$}, \\
				2, \text{ otherwise.}
			\end{cases}
			$$
\end{proposition}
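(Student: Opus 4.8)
The plan is to analyze the wheel $W_n = C_{n-1} + K_1$ by separating the question of which wheels admit an interval coloring (giving $\muint = 1$) from the task of producing a $2$-improper interval coloring for all remaining wheels (giving $\muint \leq 2$, hence equality, since $W_n$ is never a disjoint union of edges so $\muint \geq 1$ always, and we must show $\muint > 1$ for $n \notin \{4,7,10\}$). The central vertex (the hub) has degree $n-1$, while every rim vertex has degree $3$, so $\Delta(W_n) = n-1$ and $\delta(W_n) = 3$ once $n \geq 5$. I would first dispose of the small cases by exhibiting explicit interval colorings for $n = 4, 7, 10$; here the hub spectrum $[1, n-1]$ must be an interval of length $n-1$ with each color used exactly once at the hub, and each rim vertex must also see an interval of three consecutive colors, which constrains things tightly and is why only these three values of $n$ work.

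The heart of the argument is the negative direction: showing $\muint(W_n) \geq 2$, i.e. $W_n$ has no interval coloring, for $n \notin \{4,7,10\}$. The standard tool is a counting/parity argument on the cycle $C_{n-1}$. In any interval coloring of $W_n$, the hub forces all of the spoke colors to be distinct and to fill an interval of length $n-1$; then each rim vertex $v_i$ has its two cycle-edges and one spoke, and its three colors must form a consecutive triple containing the spoke color. I would track how the spoke colors vary as one walks around the rim: the constraint that each rim vertex sees a length-three interval limits the difference between consecutive spoke colors, and summing these differences around the cycle (which must return to the starting value) produces a divisibility or parity condition on $n-1$. The claim is that this condition is satisfiable precisely when $n \in \{4,7,10\}$ (note $n-1 \in \{3,6,9\}$, suggesting a condition modulo $3$), and fails otherwise.

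For the upper bound $\muint(W_n) \leq 2$ in the remaining cases, I would give an explicit $2$-improper interval coloring. A natural construction is to color the $n-1$ spokes with the colors $1, 2, \dots, n-1$ (so the hub sees the full interval $[1,n-1]$, each color once, trivially $1$-improper at the hub), and then color the rim edges so that each rim vertex $v_i$ sees an interval: since $v_i$ already carries spoke color $i$, I would choose its two incident rim-edge colors from $\{i-1, i, i+1\}$, duplicating a color at $v_i$ only when forced, which keeps the impropriety at most $2$. One must handle the wrap-around between $v_{n-1}$ and $v_1$ carefully so that the cyclic consistency of the rim-edge colors is maintained while never exceeding two like-colored edges at any vertex; this boundary adjustment is the only delicate point of the construction.

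The main obstacle will be the negative direction for general $n$: making the walk-around-the-cycle counting argument fully rigorous, in particular proving that the accumulated spoke-color differences force exactly the congruence that isolates $n \in \{4,7,10\}$, rather than merely suggesting it. By contrast, the explicit $2$-improper colorings, both for the three interval-colorable wheels and for the generic case, should be routine verifications once the color patterns are written down, modulo the wrap-around bookkeeping on the rim.
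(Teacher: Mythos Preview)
Your approach differs substantially from the paper's, which does not prove either direction from scratch: it simply cites the literature (Axenovich, and Giaro--Kubale--Ma{\l}afiejski) for the fact that $W_n$ is interval colorable if and only if $n\in\{4,7,10\}$, and cites Hud\'ak et al.\ for the explicit $2$-improper interval coloring. So your proposal is more ambitious than what the paper actually does.

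There is, however, a genuine gap in your plan for the negative direction. Your heuristic that the walk-around-the-cycle argument yields ``a condition modulo $3$'' cannot be right as stated: any purely congruential condition on $n-1$ would admit infinitely many solutions, yet the interval-colorable wheels form the \emph{finite} set $\{4,7,10\}$. In particular $n=13$ has $n-1=12\equiv 0\pmod 3$ but $W_{13}$ is \emph{not} interval colorable. The missing ingredient is an upper bound on $n$: tracking rim-edge and spoke colors around $C_{n-1}$ does give step-size constraints of the kind you describe, but one must combine these with the requirement that the spoke colors form a \emph{bijection} onto an interval of length $n-1$ at the hub to force $n-1\le 9$. Without isolating that mechanism, your cycle-walk argument cannot close, and this is exactly why the published proofs (which the paper cites) are somewhat delicate rather than a one-line parity count.

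For the upper bound $\muint(W_n)\le 2$, your sketch is headed in the right direction, and is essentially what Hud\'ak et al.\ do. But the wrap-around between $v_{n-1}$ and $v_1$ is not merely bookkeeping: with your stated scheme (spokes colored $1,\dots,n-1$ and rim edges chosen from $\{i-1,i,i+1\}$), the closing rim edge must lie in an interval near $1$ \emph{and} in an interval near $n-1$, which is impossible once $n$ is moderately large. You will need to modify the spoke-coloring pattern (e.g., let the spoke colors rise and then fall, exploiting that the hub may see each color twice) rather than simply ``adjust'' at the boundary.
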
	
\begin{proof}
			Let $W_{n}$ be a wheel graph. In 
			\cite{Axen,GiaroKubaleMalaf2}, it was shown that $W_{n}$ 
			has an interval coloring if and only if $n=4,7$ or $10$. 
			Hence, it suffices to prove that $\muint(W_n) \leq 2$; 
			this follows from a result in \cite{Hudak}: in fact
			the improper interval $(n-1)$-coloring of 
			$W_{n}$ described in the proof of Theorem 2.8 in \cite{Hudak} 
			is a $2$-improper interval coloring of $W_{n}$.
\end{proof}

In \cite{FengHuang}, the authors considered the
problem of constructing interval edge colorings
of so-called {\em generalized $\theta$-graphs};
a {\em generalized $\theta$-graph}, denoted by $\theta_{m}$, 
is a graph consisting of 
two vertices $u$ and $v$ together with
$m$ internally-disjoint $(u,v)$-paths, 
where $2\leq m<\infty$. 
These graphs also have small impropriety.

\begin{proposition}\label{theta}
For any $m \geq 2$, 
	$$\muint(\theta_{m}) =
			\begin{cases}
				1, \text{ if $\theta_{m}$ is not an Eulerian graph with an odd number of edges}, \\
				2, \text{ otherwise.}
			\end{cases}
	$$
\end{proposition}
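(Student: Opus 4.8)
The plan is to reduce the statement to a question about ordinary interval colorability and then settle the exceptional case by an explicit construction. First I would observe that a $1$-improper interval coloring is exactly a proper interval coloring, so $\muint(\theta_m)=1$ if and only if $\theta_m$ admits an interval coloring, and $\muint(\theta_m)\geq 2$ otherwise. Writing $\ell_1,\dots,\ell_m$ for the lengths (numbers of edges) of the $m$ internally disjoint $(u,v)$-paths, the vertices $u$ and $v$ have degree $m$ while every internal vertex has degree $2$; hence $\theta_m$ is Eulerian precisely when $m$ is even, and it has an odd number of edges precisely when $\sum_{i=1}^m \ell_i$ is odd. Thus ``$\theta_m$ is Eulerian with an odd number of edges'' means exactly ``$m$ even and $\sum_i\ell_i$ odd'', and the task splits into showing $\muint(\theta_m)=1$ in all other cases and $\muint(\theta_m)=2$ in this one.

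Next I would record the local structure of an interval coloring $\alpha$: at each internal (degree-$2$) vertex the two incident edges receive consecutive colors, so along any path the colors of successive edges differ by exactly $1$. Letting $f_i,g_i$ be the colors at $u$ and at $v$ of the end-edges of the $i$-th path, this forces $g_i-f_i\equiv \ell_i-1 \pmod 2$ and $|g_i-f_i|\leq \ell_i-1$; conversely any end-colors satisfying these two conditions are realized by a $\pm 1$ walk of length $\ell_i-1$ (go monotonically from $f_i$ to $g_i$, then oscillate), keeping all colors positive. Since the end-edges at $u$ (at $v$) are exactly the $m$ edges incident with $u$ (with $v$), an interval coloring is equivalent to a choice of end-colors for which $\{f_i\}$ and $\{g_i\}$ are each intervals of $m$ consecutive integers. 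A parity count on the end-colors, using that a sum of $m$ consecutive integers is $\equiv m/2 \pmod 2$ when $m$ is even, then shows that no such choice exists when $m$ is even and $\sum_i\ell_i$ is odd; so $\theta_m$ has no interval coloring in the exceptional case, giving $\muint(\theta_m)\geq 2$ there. Conversely, \cite{FengHuang} establishes that $\theta_m$ does admit an interval coloring in every non-exceptional case, yielding $\muint(\theta_m)=1$ in those cases.

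It then remains to prove $\muint(\theta_m)\leq 2$ in the exceptional case $m$ even, $\sum_i\ell_i$ odd, where both the number $O$ of odd-length paths and the number $E$ of even-length paths are odd (and $O\geq 1$). Here I would build a $2$-improper interval coloring using the equivalence above but relaxing the requirement at $v$: take the colors at $u$ to be $[1,m]$ (proper), and aim for the colors at $v$ to fill $[1,m-1]$ with exactly one color repeated (which is $2$-improper at $v$). Assign the odd-length paths to the colors $[1,O]$ with $g_i=f_i$, and the even-length paths to the colors $[O+1,m]$ with $g_i=f_i\pm1$ chosen so as to swap consecutive colors in pairs; because $E$ is odd, exactly one even-length path is left unpaired, and it is forced to repeat a color at $v$. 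A short check shows the resulting $v$-colors cover $[1,m-1]$ with one repetition and that every difference $|g_i-f_i|\in\{0,1\}$ meets the magnitude bound (even-length paths have $\ell_i\geq 2$). Realizing each path by the corresponding $\pm1$ walk gives the coloring, so $\muint(\theta_m)=2$. The degenerate case $m=2$, where $\theta_2$ is a cycle, is covered by the already established value of $\muint(C_n)$.

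The main obstacle I anticipate is the bookkeeping in this last step: the assignment of paths to end-colors and the signs of the offsets must be chosen so that the $u$-colors form a full interval, the $v$-colors form a full interval with exactly one repeated color and no gaps, and all magnitude constraints $|g_i-f_i|\leq\ell_i-1$ hold simultaneously. The structural facts that make this go through are that, since $E$ is odd, any pairing of the even-length paths into consecutive-color pairs leaves exactly one path unmatched, so precisely one repetition is forced; and a careful choice of the base intervals guarantees that this single repetition closes the $v$-colors into an interval rather than opening a gap. Producing one uniformly described assignment valid for every admissible multiset of path lengths, instead of a proliferation of subcases, is the delicate point.
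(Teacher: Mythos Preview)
Your argument is correct, but it is far more elaborate than what is needed, and the paper's proof is dramatically simpler on the key step. For the upper bound $\muint(\theta_m)\leq 2$, the paper simply colors every edge of the $i$th $(u,v)$-path with the single color $i$. Then $u$ and $v$ each see the interval $[1,m]$ (properly), and every internal vertex sees one color twice; this is a $2$-improper interval coloring with no bookkeeping whatsoever. By contrast, you work hard to build a coloring that is proper at $u$ and at all internal vertices and only $2$-improper at $v$, which forces you into the end-color/parity analysis and the pairing of even-length paths. Your construction does work (the cleanest variant is to set $g_i=f_i$ for the odd-length paths and $g_i=f_i-1$ for all even-length paths, so the $v$-colors are exactly $[1,m-1]$ with $O$ repeated once), but the ``swap consecutive colors in pairs'' description obscures this and invites the very case-proliferation you flag as an obstacle.

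The other difference is that you reprove, via your parity count, the non-colorability direction of the Feng--Huang characterization, whereas the paper simply cites \cite{FengHuang} for both directions. Your parity argument is a nice self-contained replacement for half of that citation; it just is not needed here. In summary: your route is sound and yields a slightly stronger coloring (improper only at a single vertex), but the paper's monochromatic-path trick gives the required bound in one line.
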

\begin{proof}
In \cite{FengHuang}, it was proved that $\theta_{m}$ has an interval coloring if and only if it is not an Eulerian graph with an odd number of edges. Hence, it suffices to prove that $\muint(\theta_{m})\leq 2$; 
for $i=1,\dots,m$,
we color all edges of the $i$th path between $u$ and $v$ by color $i$.
Thus, trivially $\muint(\theta_{m})\leq 2$. 
\end{proof}

%%%%%%%%%%%%%%%%	

	\subsection{Graphs with large impropriety}
	
In this section we describe several families of graphs
with large impropriety. We begin our considerations with constructions based on subdivisions.\\
	
Let $G$ be a graph and $V(G)=\{v_{1},\ldots,v_{n}\}$. Define graphs
$S(G)$ and $\widehat{G}$ as follows:
\begin{center}
$V(S(G))=\{v_{1},\ldots,v_{n}\}\cup \{w_{ij}:v_{i}v_{j}\in E(G)\}$,
\end{center}
\begin{center}
$E(S(G))=\{v_{i}w_{ij},v_{j}w_{ij}:v_{i}v_{j}\in E(G)\}$,
\end{center}
\begin{center}
$V(\widehat{G})=V(S(G))\cup \{u\}$, $u\notin V(S(G))$,
$E(\widehat{G})=E(S(G))\cup \{uw_{ij}:v_{i}v_{j}\in E(G)\}$.
\end{center}

In other words, $S(G)$ is the graph obtained by subdividing every
edge of $G$, and $\widehat{G}$ is the graph obtained from $S(G)$ by
connecting every inserted vertex to a new vertex $u$. Note that
$S(G)$ and $\widehat{G}$ are bipartite graphs.

\begin{theorem}
\label{subdivisions} If $G$ is a connected graph and
\begin{center}
$\vert E(G)\vert
> k\left(1+ {\max\limits_{P\in \mathbf{P}}}{\sum\limits_{v\in
V(P)}}\left(d_{\widehat{G}}(v)-1\right)\right)$,
\end{center}
where $\mathbf{P}$ is a set of all shortest paths in $S(G)$
connecting vertices $w_{ij}$, then $\muint(\widehat{G})>k$.
\end{theorem}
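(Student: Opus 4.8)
The plan is to argue by contradiction: suppose $\widehat{G}$ has a $k$-improper interval edge coloring $\alpha$ using the given bound on $|E(G)|$, and derive a contradiction by showing the colors appearing at the apex vertex $u$ must span too wide a range. First I would observe the local structure: in $\widehat{G}$, each subdivision vertex $w_{ij}$ has degree exactly $3$, being adjacent to $v_i$, $v_j$, and $u$; the original vertices $v_i$ retain degree $d_G(v_i)$; and $u$ has degree $|E(G)|$. The key point is that at each $w_{ij}$ the three incident colors $\alpha(v_iw_{ij})$, $\alpha(v_jw_{ij})$, $\alpha(uw_{ij})$ form an interval of length at most $3$, so in particular the color on $uw_{ij}$ is tightly constrained relative to the colors on the other two edges at $w_{ij}$.

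Next I would set up a counting/spread argument on $S(G)$. The idea is that the colors $\alpha(uw_{ij})$ over all edges $v_iv_j \in E(G)$ must themselves form an interval (the spectrum $S(u,\alpha)$), and since $\alpha$ is $k$-improper at $u$, each color is used at most $k$ times there, so this interval has length at least $|E(G)|/k$. On the other hand, I would bound how much two colors $\alpha(uw_{ij})$ and $\alpha(uw_{i'j'})$ can differ when $w_{ij}$ and $w_{i'j'}$ are joined by a short path in $S(G)$: traversing an edge of $S(G)$ changes the color by a bounded amount controlled by the interval condition at the intermediate vertex, so along a path $P$ the total color change is at most roughly $\sum_{v \in V(P)}(d_{\widehat{G}}(v)-1)$. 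Taking the maximum over shortest paths $P \in \mathbf{P}$ between subdivision vertices, the diameter of the color-set $\{\alpha(uw_{ij})\}$ is at most $\max_{P}\sum_{v\in V(P)}(d_{\widehat{G}}(v)-1)$, hence the interval $S(u,\alpha)$ has length at most $1 + \max_{P}\sum_{v\in V(P)}(d_{\widehat{G}}(v)-1)$.

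Combining the two bounds gives
\[
\frac{|E(G)|}{k} \le \bigl|S(u,\alpha)\bigr| \le 1 + \max_{P\in\mathbf{P}}\sum_{v\in V(P)}\bigl(d_{\widehat{G}}(v)-1\bigr),
\]
so $|E(G)| \le k\left(1 + \max_{P}\sum_{v\in V(P)}(d_{\widehat{G}}(v)-1)\right)$, contradicting the hypothesis. This forces $\muint(\widehat{G}) > k$. The step I expect to be the main obstacle is making precise the claim that the color $\alpha(uw_{ij})$ changes by a controlled amount as one moves from one subdivision vertex to an adjacent one along $S(G)$. Concretely, I would need to show that at each vertex $v$ on the path the interval condition bounds the difference between the color entering $v$ and the color leaving $v$ by something like $d_{\widehat{G}}(v)-1$ (the width of the spectrum at $v$), and then relate the color on the incident $u$-edge at each endpoint back to the colors on the path edges via the length-$3$ interval at each $w_{ij}$. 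Getting the constants and the $+1$ exactly right, and confirming connectivity of $S(G)$ guarantees such shortest paths exist between all relevant pairs $w_{ij}$, is where the careful bookkeeping lies.
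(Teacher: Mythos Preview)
Your proposal is correct and follows essentially the same approach as the paper: assume a $k$-improper interval coloring, note that the spectrum at $u$ has size at least $|E(G)|/k$, pick subdivision vertices $w,w'$ realizing the minimum and maximum colors at $u$, and bound $\alpha(uw')-\alpha(uw)$ by walking along a shortest path in $S(G)=\widehat{G}-u$ using the fact that at each vertex $v$ on the path the interval condition forces the outgoing color to differ from the incoming one by at most $d_{\widehat{G}}(v)-1$. The bookkeeping you flag (the $+1$, and connectivity of $S(G)$) is exactly what the paper handles, and your version matches it.
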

\begin{proof}
Suppose, to the contrary, that $\widehat{G}$ has a $k$-improper interval
$t$-coloring $\alpha$; then $t\geq \frac{\vert E(G)\vert}{k}$,
because there is a vertex $u$ in $\widehat{G}$ that is adjacent to all vertices
in $V(S(G)) \setminus V(G)$.

Consider the vertex $u$, and let $w$ and $w^{\prime}$ be two vertices
adjacent to $u$ satisfying that $\alpha(uw)=\underline{S}(u,\alpha)=s$ and
$\alpha(uw^{\prime})=\overline{S}(u,\alpha)\geq s+\frac{\vert E(G)\vert}{k}-1$.
Since %$\widehat{G}-u=S(G)$ and 
$\widehat{G}-u$ is connected, there is a
shortest path $P(w,w^{\prime})$ in $\widehat{G}-u$ joining $w$ with
$w^{\prime}$, where
\begin{center}
$P(w,w^{\prime})=x_{1},e_{1},x_{2},\ldots,x_{i},e_{i},x_{i+1},
\ldots,x_{r},e_{r},x_{r+1}$
and $x_{1}=w$, $x_{r+1}=w^{\prime}$.
\end{center}

Note that
\begin{center}
$\alpha(x_{i}x_{i+1})\leq s+\underset{j=1}{\overset{i}{\sum
}}(d_{\widehat{G}}(x_{j})-1)$ for $1\leq i\leq r$
\end{center}
and
$$\alpha(x_{r+1}u)=\alpha(w^{\prime}u)\leq
s+\underset{j=1}{\overset{r+1}{\sum }}(d_{\widehat{G}}(x_{j})-1).$$
Hence

$$s+\frac{\vert E(G)\vert}{k}-1\leq \overline{S}(u,\alpha)=\alpha(uw^{\prime})\leq
s+\underset{j=1}{\overset{r+1}{\sum }}(d_{\widehat{G}}(x_{j})-1)\leq
s+{\max\limits_{P\in \mathbf{P}}}{\sum\limits_{v\in V(P)}}\
\left(d_{\widehat{G}}(v)-1\right)$$

and thus
\begin{center}
$\vert E(G)\vert \leq k\left(1+{\max\limits_{P\in
\mathbf{P}}}{\sum\limits_{v\in V(P)}}\
\left(d_{\widehat{G}}(v)-1\right)\right)$,
\end{center}
which is a contradiction.
 
\end{proof}

\begin{corollary}
\label{cor_Kn} If $n^{2}-n>2k(2n+3)$, then $\muint(\widehat{K}_{n})>k$.
\end{corollary}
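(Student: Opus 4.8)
The plan is to apply Theorem~\ref{subdivisions} directly to the complete graph $K_n$, so the work reduces to computing (or suitably bounding) the two quantities that appear on the right-hand side of the hypothesis: the number of edges $|E(K_n)|$ and the maximum, over all shortest paths $P$ joining two subdivision vertices in $S(K_n)$, of $\sum_{v\in V(P)}(d_{\widehat{K}_n}(v)-1)$. Since $|E(K_n)| = \binom{n}{2} = \frac{n^2-n}{2}$, the hypothesis $n^2 - n > 2k(2n+3)$ is exactly $|E(K_n)| > k(2n+3)$. So it suffices to show that the bracketed expression
$$
1 + \max_{P\in\mathbf P}\sum_{v\in V(P)}\bigl(d_{\widehat{K}_n}(v)-1\bigr)
$$
is at most $2n+3$; then Theorem~\ref{subdivisions} gives $\muint(\widehat{K}_n) > k$ immediately.

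First I would record the relevant degrees in $\widehat{K}_n$. The subdivision vertices $w_{ij}$ all have degree $3$ in $\widehat{K}_n$ (they are adjacent to $v_i$, $v_j$, and the apex $u$), so each contributes $d_{\widehat{K}_n}(w_{ij})-1 = 2$. Each original vertex $v_i$ has degree $n-1$ in $\widehat{K}_n$ (it keeps its $n-1$ incident subdivided edges and is not joined to $u$), contributing $n-2$. The next step is to understand the structure of a shortest path $P\in\mathbf P$ in $S(K_n)$ between two subdivision vertices. In the bipartite subdivision $S(K_n)$, with the two sides being the original vertices $\{v_i\}$ and the subdivision vertices $\{w_{ij}\}$, any path alternates between the two sides; a shortest path between two $w$-vertices is short, since any two subdivision vertices $w_{ij}$ and $w_{pq}$ are at distance at most $4$ (they lie within two hops of a common original vertex). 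I would argue that such a shortest path visits at most three original vertices and at most two subdivision vertices as \emph{internal/endpoint} vertices, and then bound the sum accordingly.

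The \emph{main obstacle} is bookkeeping the worst-case composition of $V(P)$ so that the sum $\sum_{v\in V(P)}(d_{\widehat{K}_n}(v)-1)$ is correctly maximized. A shortest path between two $w$-vertices that share no common original endpoint has the form $w \,v\, w' \,v'\, w''$ of length $4$, containing three $w$-vertices and two $v$-vertices; this yields $3\cdot 2 + 2\cdot(n-2) = 2n+2$. Adding the leading $1$ from Theorem~\ref{subdivisions} gives exactly $2n+3$, matching the target. I would check that no shortest path in $\mathbf P$ does worse: paths between $w$-vertices sharing a common original endpoint are shorter (length $2$, of the form $w\,v\,w'$), giving a strictly smaller sum, and the alternating/bipartite structure forbids two consecutive high-degree $v$-vertices, so the count of $v$-vertices on a length-$4$ path cannot exceed two. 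Hence
$$
1 + \max_{P\in\mathbf P}\sum_{v\in V(P)}\bigl(d_{\widehat{K}_n}(v)-1\bigr) \le 2n+3,
$$
and combining this with $|E(K_n)| = \frac{n^2-n}{2} > k(2n+3)$ lets Theorem~\ref{subdivisions} conclude that $\muint(\widehat{K}_n) > k$, as desired.
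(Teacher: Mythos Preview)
Your proposal is correct and follows exactly the intended route: the paper states Corollary~\ref{cor_Kn} without proof, and the only way to obtain it is to specialize Theorem~\ref{subdivisions} to $G=K_n$ with the computations you carry out. One small expository slip: where you write ``at most three original vertices and at most two subdivision vertices'' the roles are swapped --- your subsequent (correct) computation has three $w$-vertices and two $v$-vertices, giving $3\cdot 2 + 2(n-2)=2n+2$ and hence the bound $2n+3$.
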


\begin{corollary}
\label{cor_Kmn} If $mn>k(m+n+5)$, then
$\muint(\widehat{K}_{m,n})>k$.
\end{corollary}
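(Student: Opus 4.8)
The plan is to apply Theorem~\ref{subdivisions} directly with $G = K_{m,n}$, exactly as in Corollary~\ref{cor_Kn}. Writing the two parts of $K_{m,n}$ as $\{a_1,\dots,a_m\}$ and $\{b_1,\dots,b_n\}$, the subdivision vertices of $S(K_{m,n})$ are the vertices $w_{ij}$ arising from the edges $a_ib_j$, and $\widehat{K}_{m,n}$ is obtained by joining all of them to a new vertex $u$. First I would record $|E(K_{m,n})| = mn$ and the relevant degrees in $\widehat{K}_{m,n}$: every $a_i$ has degree $n$, every $b_j$ has degree $m$, every subdivision vertex $w_{ij}$ has degree $3$ (its two neighbours $a_i,b_j$ together with $u$), and $u$ has degree $mn$. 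Thus each $w_{ij}$ contributes $d_{\widehat{G}}(w_{ij})-1 = 2$, each $a_i$ contributes $n-1$, and each $b_j$ contributes $m-1$ to the sum appearing in Theorem~\ref{subdivisions}.

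Next I would determine the shortest paths in $S(K_{m,n})$ between two distinct subdivision vertices $w_{ij}$ and $w_{kl}$. Since $S(K_{m,n})$ is bipartite with the subdivision vertices forming one part, any path joining two subdivision vertices has even length, so the distance between $w_{ij}$ and $w_{kl}$ is either $2$ or $4$. It equals $2$ precisely when the corresponding edges of $K_{m,n}$ share an endpoint: if $i=k$ the path is $w_{ij}, a_i, w_{il}$, with vertex sum $2+(n-1)+2 = n+3$, and if $j=l$ the path is $w_{ij}, b_j, w_{kj}$, with sum $2+(m-1)+2 = m+3$. Otherwise $i\neq k$ and $j\neq l$, no common neighbour exists, and a shortest path has length $4$, e.g. $w_{ij}, a_i, w_{il}, b_l, w_{kl}$; its vertex sum is
\[
2 + (n-1) + 2 + (m-1) + 2 = m + n + 4 .
\]

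Comparing the three values, and using $m,n \geq 1$, the maximum of $\sum_{v\in V(P)}(d_{\widehat{G}}(v)-1)$ over $P \in \mathbf{P}$ equals $m+n+4$, attained on the length-$4$ paths. Substituting into Theorem~\ref{subdivisions}, the hypothesis
\[
mn = |E(K_{m,n})| > k\bigl(1 + (m+n+4)\bigr) = k(m+n+5)
\]
yields $\muint(\widehat{K}_{m,n}) > k$, as required. There is no real obstacle here beyond bookkeeping: the only points requiring care are the parity argument that forces path lengths to be even (so that $4$, not $3$, is the relevant maximal shortest-path length) and keeping the two parts straight, since $a_i$ and $b_j$ have the different degrees $n$ and $m$; a quick sanity check is that the analogous computation for $G=K_n$ reproduces the bound $n^2-n > 2k(2n+3)$ of Corollary~\ref{cor_Kn}.
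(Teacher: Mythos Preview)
Your proposal is correct and is exactly the direct application of Theorem~\ref{subdivisions} that the paper intends (the paper states the corollary without proof). The degree computations and the case analysis on shortest paths in $S(K_{m,n})$ are accurate, and the resulting bound $\max_{P\in\mathbf{P}}\sum_{v\in V(P)}(d_{\widehat G}(v)-1)=m+n+4$ gives precisely the hypothesis $mn>k(m+n+5)$; the only cosmetic point is that when $m=1$ or $n=1$ no length-$4$ shortest paths exist, but then $m+n+4$ is still an upper bound and the hypothesis is vacuous anyway.
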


Our next construction uses techniques first described in \cite{PetrosKhacha} and generalizes the family of so-called Hertz graphs first described in \cite{GiaroKubaleMalaf1}.

Let $T$ be a tree and 
let $\mathcal{P}$ be the set of all paths in $T$. We set
$F(T)=\{v:v\in V(T)\wedge d_{T}(v)=1\}$, and 
define $M(T)$ as follows:

\begin{equation*}
M(T)=\max_{P \in \mathcal{P}} \{|E(P)| + |\{uw: uw \in E(T), u \in V(P),
w \notin V(P)\}|\}.
\end{equation*}

Now let us define the graph $\widetilde{T}$ as follows:
\begin{center}
$V(\widetilde{T})=V(T)\cup \{u\}$, $u\notin V(T)$,
$E(\widetilde{T})=E(T)\cup \{uv:v\in F(T)\}$.
\end{center}

Clearly, $\widetilde{T}$ is a connected graph with
$\Delta(\widetilde{T})=\vert F(T)\vert$. Moreover, if $T$ is a tree
in which the distance between any two pendant vertices is even, then
$\widetilde{T}$ is a connected bipartite graph.\\

\begin{theorem}
\label{trees} If $T$ is a tree and $\vert F(T)\vert >
k\left(M(T)+2\right)$, then $\muint(\widetilde{T})>k$.
\end{theorem}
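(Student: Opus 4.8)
The plan is to mimic the proof of Theorem~\ref{subdivisions}, since $\widetilde{T}$ is built from a tree in much the same spirit as $\widehat{G}$ is built from $G$: we attach a single universal apex vertex $u$ to a large set of ``leaf-like'' vertices, and then exploit the fact that a long spectrum at $u$ forces a long chain of color increments along a path in $\widetilde{T}-u$. Suppose, for contradiction, that $\widetilde{T}$ admits a $k$-improper interval $t$-coloring $\alpha$. The vertex $u$ is adjacent to all $\vert F(T)\vert$ pendant vertices of $T$, so its spectrum $S(u,\alpha)$ has at least $\lceil \vert F(T)\vert / k\rceil$ colors, whence $t \geq \vert F(T)\vert / k$ and the spread at $u$ satisfies $\overline{S}(u,\alpha) - \underline{S}(u,\alpha) \geq \vert F(T)\vert / k - 1$.

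Next I would fix two neighbors $w, w'$ of $u$ realizing the smallest and largest colors at $u$, so $\alpha(uw) = \underline{S}(u,\alpha) = s$ and $\alpha(uw') = \overline{S}(u,\alpha) \geq s + \vert F(T)\vert/k - 1$. Since $\widetilde{T}-u$ contains $T$, which is connected, there is a path $Q$ in $\widetilde{T}-u$ from $w$ to $w'$; because $w,w'$ are leaves of $T$, this path runs entirely inside $T$. The core estimate is the telescoping bound from the previous proof: as we traverse $Q = x_1 e_1 x_2 \cdots x_{r+1}$, the interval property at each internal vertex $x_j$ forces the color to change by at most $d_{\widehat{T}}(x_j)-1$ in absolute value between consecutive edges, so
\begin{equation*}
\alpha(uw') \leq s + \sum_{j=1}^{r+1}\bigl(d_{\widetilde{T}}(x_j)-1\bigr).
\end{equation*}
Combining this with the lower bound on $\alpha(uw')$ yields $\vert F(T)\vert/k - 1 \leq \sum_{j=1}^{r+1}(d_{\widetilde{T}}(x_j)-1)$, and hence $\vert F(T)\vert \leq k\bigl(1 + \sum_{v \in V(Q)}(d_{\widetilde{T}}(v)-1)\bigr)$.

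The remaining—and genuinely new—step is to bound $\sum_{v \in V(Q)}(d_{\widetilde{T}}(v)-1)$ by $M(T)+1$, so that the final inequality reads $\vert F(T)\vert \leq k(M(T)+2)$, contradicting the hypothesis. Here the bookkeeping differs from Theorem~\ref{subdivisions} because degrees in $\widetilde{T}$ exceed degrees in $T$: each vertex $v$ of $T$ gains one extra incident edge to $u$ precisely when $v \in F(T)$. I expect this to be the main obstacle, and the definition of $M(T)$ is tailored exactly for it. The idea is that $Q$ corresponds to a path $P$ in $T$, and $\sum_{v \in V(P)}(d_T(v)-1)$ counts the edges of $P$ itself together with the edges of $T$ leaving $P$ to vertices off $P$; the former contribute $\vert E(P)\vert$ and the latter contribute the $\vert\{uw : uw\in E(T), u\in V(P), w\notin V(P)\}\vert$ term, so the $T$-degree part of the sum is at most $M(T)$. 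The two endpoints $w,w'$ of $Q$ lie in $F(T)$ and so each carry one additional $\widetilde{T}$-edge to $u$ not accounted for in $T$; since these endpoints are leaves of $T$ their contribution $d_{\widetilde{T}}(v)-1$ is just $1$ each, and a careful accounting absorbs the surplus into the ``$+1$'' to give $\sum_{v\in V(Q)}(d_{\widetilde{T}}(v)-1) \leq M(T)+1$. I would verify this degree comparison explicitly, taking care that the apex edges at interior leaf vertices of $P$ (if any) are correctly counted, as that is exactly the point where the $M(T)$ maximum must be shown to dominate.
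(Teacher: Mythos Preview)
Your approach is correct and matches the paper's. Your concern about interior leaves is moot—an internal vertex of a path in $T$ has $d_T\ge 2$, hence is never in $F(T)$ and carries no apex edge; the paper exploits this by telescoping directly in $d_T$ (with a ``$+1$'' absorbing the single apex edge at $x_1=w$), obtaining $\alpha(x_r x_{r+1})\le s+1+\sum_{j=1}^{r}(d_T(x_j)-1)\le s+M(T)$ and then $\alpha(uw')\le s+M(T)+1$, which is exactly your bound $\sum_{v\in V(Q)}(d_{\widetilde{T}}(v)-1)\le M(T)+1$ in disguise.
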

\begin{proof}
Suppose, to the contrary, that $\widetilde{T}$ has a $k$-improper interval
$t$-coloring $\alpha$ for some $t\geq \frac{\vert F(T)\vert}{k}$.

Consider the vertex $u$. Let $v$ and $v^{\prime}$ be two vertices
adjacent to $u$ such that $\alpha(uv)=\underline{S}(u,\alpha)=s$ and
$\alpha(uv^{\prime})=\overline{S}(u,\alpha)\geq s+\frac{\vert F(T)\vert}{k}-1$. Since $\widetilde{T}-u$ is a tree, there is a unique path
$P(v,v^{\prime})$ in $\widetilde{T}-u$ joining $v$ with
$v^{\prime}$, where

\begin{center}
$P(v,v^{\prime})=x_{1},e_{1},x_{2},\ldots,x_{i},e_{i},x_{i+1},\ldots,
x_{r},e_{r},x_{r+1}$ and $x_{1}=v$, $x_{r+1}=v^{\prime}$.
\end{center}

Note that
\begin{center}
$\alpha(x_{i}x_{i+1})\leq s+1+\underset{j=1}{\overset{i}{\sum
}}(d_{T}(x_{j})-1)$ for $1\leq i\leq r$.
\end{center}

From this, we have

$$\alpha(x_{r}x_{r+1})=\alpha(x_{r}v^{\prime})\leq
s+1+\underset{j=1}{\overset{r}{\sum
}}(d_{T}(x_{j})-1)\leq s+M(T).$$

Hence

$$s+\frac{\vert F(T)\vert}{k}-1\leq \overline{S}(u,\alpha)=\alpha(uv^{\prime})\leq
s+1+M(T),$$ 
and thus $\vert F(T)\vert\leq k\left(M(T)+2\right)$, which is a contradiction.
 
\end{proof}

\begin{corollary}
\label{cor_bip} If $T$ is a tree in which the distance
between any two pendant vertices is even and $\vert F(T)\vert >
k\left(M(T)+2\right)$, then the bipartite graph $\widetilde{T}$ has no $k$-improper interval coloring.
\end{corollary}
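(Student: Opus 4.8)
The plan is to derive this essentially as a direct specialization of Theorem \ref{trees}, the only additional ingredient being that the even-distance hypothesis forces $\widetilde{T}$ to be bipartite. First I would observe that a tree $T$ is bipartite with a unique bipartition $(A,B)$, and that two vertices lie in the same part exactly when the (necessarily unique) path joining them has even length. Hence the assumption that every pair of pendant vertices is at even distance means that all vertices of $F(T)$ lie in a single part, say $A$. Since the new vertex $u$ of $\widetilde{T}$ is joined only to vertices of $F(T)\subseteq A$, placing $u$ in $B$ extends the bipartition of $T$ to all of $\widetilde{T}$, so $\widetilde{T}$ is bipartite, exactly as remarked before the statement of Theorem \ref{trees}.

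With bipartiteness in hand, the remaining step is immediate. By Theorem \ref{trees}, the hypothesis $\vert F(T)\vert > k\left(M(T)+2\right)$ yields $\muint(\widetilde{T})>k$. By the definition of $\muint(G)$ as the smallest integer $k$ for which $G$ admits a $k$-improper interval edge coloring, the strict inequality $\muint(\widetilde{T})>k$ is precisely the assertion that $\widetilde{T}$ has no $k$-improper interval coloring. Combining this with the bipartiteness established above gives the claim.

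I do not anticipate any genuine obstacle here: all the combinatorial content---in particular the counting argument that bounds the color at the far end of the path $P(v,v^{\prime})$ by $s+M(T)$---has already been carried out in the proof of Theorem \ref{trees}. The sole purpose of the corollary is to record that the graphs $\widetilde{T}$ produced by that theorem can be taken to be bipartite, thereby furnishing bipartite graphs of arbitrarily large impropriety. The one point deserving care is simply to confirm that the even-distance condition is exactly what is needed to keep every leaf on one side of the bipartition, which is what the first paragraph verifies.
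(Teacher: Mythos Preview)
Your proposal is correct and matches the paper's approach: the corollary is stated without proof in the paper, being immediate from Theorem~\ref{trees} together with the remark (made just before that theorem) that the even-distance condition on pendant vertices ensures $\widetilde{T}$ is bipartite. Your first paragraph supplies exactly the verification of that remark, and the rest is the intended direct application of Theorem~\ref{trees}.
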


The {\em deficiency} of a graph $G$ is the minimum number of edges
whose removal from $G$ yields a graph with an interval coloring. Thus,
the deficiency of a graph is another measure of how far from being
interval colorable a graph is.

As mentioned above,
our constructions by trees generalize the so-called
Hertz's graphs $H_{p,q}$, first described in
\cite{GiaroKubaleMalaf1}. Hertz's graphs
are known to have a high deficiency,
so let us specifically 
consider the impropriety of such graphs.

In \cite{GiaroKubaleMalaf1} the Hertz's graph 
$H_{p,q}$ $(p,q\geq 2)$ was defined as
follows:
\begin{center}
$V(H_{p,q})=\left\{a,b_{1},b_{2},\ldots,b_{p},d\right\}\cup
\left\{c_{j}^{(i)}:1\leq i\leq p, 1\leq j\leq q\right\}$ and
\end{center}
\begin{center}
$E(H_{p,q})=E_{1}\cup E_{2}\cup E_{3}$,
\end{center}
where
\begin{center}
$E_{1}=\left\{ab_{i}:1\leq i\leq p\right\}$,
$E_{2}=\left\{b_{i}c_{j}^{(i)}:1\leq i\leq p, 1\leq j\leq
q\right\}$, $E_{3}=\left\{c_{j}^{(i)}d:1\leq i\leq p, 1\leq j\leq q
\right\}$.
\end{center}

The graph $H_{p,q}$ is bipartite with maximum degree $\Delta (H_{p,q})=pq$ and
$\vert V(H_{p,q})\vert=pq+p+2$. 
We are now able to prove the following result; 
our main result of this section.

\begin{theorem}\label{for_any_k} For any $k \in \mathbb{N}$, there exists a bipartite graph $G$ such that $\muint(G) =k$.
\end{theorem}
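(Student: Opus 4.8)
The plan is to exhibit, for each fixed $k$, a concrete bipartite graph realizing $\muint(G)=k$ by combining a lower bound from Theorem~\ref{trees} (or its corollary) with a matching upper bound. The natural candidate is one of the tree-based graphs $\widetilde{T}$, or equivalently a Hertz graph $H_{p,q}$ with carefully chosen parameters, since these already come with a lower-bound mechanism. First I would choose a tree $T$ (equivalently, parameters $p,q$ for the Hertz family) so that the inequality $\vert F(T)\vert > (k-1)\left(M(T)+2\right)$ holds, which by Theorem~\ref{trees} forces $\muint(\widetilde{T}) > k-1$, i.e. $\muint(\widetilde{T}) \geq k$. The aim is to pick the parameters so that $M(T)$ is controlled relative to $\vert F(T)\vert$; for the Hertz graphs the quantity $M(H_{p,q})$ can be computed explicitly in terms of $p$ and $q$ (every long path passes through $a$ or $d$ and picks up the many parallel branches), so the threshold becomes an explicit arithmetic condition on $p,q$.

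Once the lower bound $\muint(G) \geq k$ is secured, the task is to produce an actual $k$-improper interval coloring to show $\muint(G) \leq k$, giving equality. The plan is to build this coloring by hand, exploiting the layered bipartite structure: the edges meeting the high-degree apex vertex $u$ (with $d(u) = \vert F(T)\vert$, which is large) are the bottleneck, and one colors them with an interval $[1, d(u)]$, then propagates consistent intervals along the branches of $T$ so that each vertex sees a set of consecutive integers. By grouping the $\vert F(T)\vert$ pendant-related edges into blocks of size $k$ and reusing colors across blocks in a controlled way, one arranges that no vertex has more than $k$ edges of the same color; the branch structure of a tree makes the interval condition at the internal vertices straightforward to verify greedily from the apex outward.

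The delicate coordination is matching the two bounds exactly: I would tune the parameters so that the lower-bound inequality is satisfied with $k$ but fails with $k-1$ (so no smaller improper interval coloring exists), while simultaneously the explicit construction achieves impropriety exactly $k$. Concretely, one wants the smallest integer strictly exceeding $\vert F(T)\vert/(M(T)+2)$ to equal $k$, and then one verifies that the hand-built coloring does not exceed impropriety $k$. Choosing a tree whose every pendant-to-pendant distance is even keeps $\widetilde{T}$ bipartite, as noted before Theorem~\ref{trees}, so the resulting example lies in the bipartite class as required by the statement.

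The hard part will be the simultaneous calibration: ensuring the chosen parameters make Theorem~\ref{trees} give a lower bound of \emph{exactly} $k$ (not merely a bound that grows with the parameters) while the explicit coloring attains \emph{exactly} the same value. The lower bound from Theorem~\ref{trees} is clean but yields a strict inequality $\muint > k-1$, so the real work is constructing the matching $k$-improper interval coloring and checking the per-vertex impropriety count along every branch; the bipartite layered structure should make this verification routine but it is where the argument must be executed carefully rather than by appeal to a prior result.
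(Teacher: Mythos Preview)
Your proposal is correct and follows essentially the same approach as the paper: take the Hertz graph $H_{p,k}$ (which is $\widetilde{T}$ for the tree $T=H_{p,k}-d$, with $|F(T)|=pk$ and $M(T)=p+2k$), choose $p$ large enough (the paper uses $p\geq 2k^{2}-1$) so that Theorem~\ref{trees} gives $\muint(H_{p,k})\geq k$, and then exhibit an explicit $k$-improper interval coloring for the matching upper bound. The ``simultaneous calibration'' you worry about is not actually delicate: the lower bound only needs to yield $\geq k$ and the upper bound comes entirely from the explicit coloring, so the two sides are decoupled and no tightness of the Theorem~\ref{trees} inequality is required.
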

\begin{proof} 
For a given $k$, choose $p$ so that $p\geq 2k^{2}-1$. Let us consider the tree $T=H_{p,k}-d$. Since $M(T)=p+2k$, $\vert F(T)\vert=pk$ and %taking into account that 
the graph $H_{p,k}$ is isomorphic to $\widetilde{T}$, by Theorem \ref{trees}, we obtain that $\muint(H_{p,k})>k-1$. On the other hand, let us define an edge coloring $\alpha$ of $H_{p,k}$ as follows:

\begin{description}
\item[($1$)] for $1\leq i\leq p$, let
$\alpha \left(ab_{i}\right)=i+1$;

\item[($2$)] for $1\leq i\leq p$ and $1\leq j\leq k$, let
$\alpha \left(b_{i}c_{j}^{(i)}\right)=\alpha \left(c_{j}^{(i)}d\right)=i$.
\end{description}

It is easy to verify that $\alpha$ is a $k$-improper interval coloring of $H_{p,k}$; thus $\muint(H_{p,k})\leq k$.
\end{proof}

\bigskip	

In the last part of this section we use finite projective planes for constructing
bipartite graphs with large impropriety. This family of graphs was first described in \cite{PetrosKhacha}.

Let $\pi(n)$ be a finite projective plane of order $n\geq 2$, $\{1,2,\ldots,n^{2}+n+1\}$ be the set of points 
and $L= \{l_1,l_2,\dots, l_{n^2+n+1}\}$ the set
of lines of $\pi(n)$.
Let $A_i=\{k \in l_i : 1 \leq k \leq n^2+n+1\}$ for every $1 \leq i \leq n^2+n+1$;
then $|A_i|=n+1$ for every $i$, and $A_i \neq A_j$ if $i \neq j$.
For a sequence of $n^2+n+1$ integers $r_{1}, r_2,\ldots, r_{n^{2}+n+1}\in \mathbb{N}$ ($r_{1}\geq \ldots\geq r_{n^{2}+n+1}\geq 1$), we define the graph
$Erd(r_{1},\ldots,r_{n^{2}+n+1})$ as follows:

\begin{center}
$V(Erd(r_{1},\ldots,r_{n^{2}+n+1}))=\{u\}\cup\{1,\ldots,n^{2}+n+1\}$\\
$\cup \left\{v^{(l_{i})}_{1},\ldots,v^{(l_{i})}_{r_{i}}:% l_{i}\in L,
1\leq i\leq n^{2}+n+1\right\}$,\\
\end{center}
\begin{equation*}
E(Erd(r_{1},\ldots,r_{n^{2}+n+1}))=
\bigcup_{i=1}^{n^{2}+n+1} \left(
\left\{uv^{(l_{i})}_{1},\ldots,uv^{(l_{i})}_{r_{i}} 
%:l_{i}\in L, 1\leq i\leq n^{2}+n+1
\right\}\cup %\\
\left\{v^{(l_{i})}_{1}k,\ldots,v^{(l_{i})}_{r_{i}}k : %l_{i}\in L,
k\in A_{i}, %1\leq k\leq n^{2}+n+1
\right\} \right).
\end{equation*}
The graph $Erd(r_{1},r_2,\ldots,r_{n^{2}+n+1})$ is a 
connected bipartite
graph with  %where the number of vertices  is 
$n^2+n+2+\underset{i=1}{\overset{n^{2}+n+1}{\sum
}}r_{i}$ 
vertices
and maximum degree $\underset{i=1}{\overset{n^{2}+n+1}{\sum}}r_{i}$.

Note that the above  graph with parameters $n=3$ and $r_1=r_2=\dots=r_{13}=1$ was described in 1991 by Erd\H{o}s \cite{JensenToft}.

\begin{theorem}
\label{Erdos} If $\underset{i=n+2}{\overset{n^{2}+n+1}{\sum
}}r_{i}-(k-1)\underset{i=1}{\overset{n+1}{\sum }}r_{i}>2k(n+1)$, then $\muint\left(Erd(r_{1},\ldots,r_{n^{2}+n+1})\right)>k$.
\end{theorem}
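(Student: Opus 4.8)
The plan is to mimic closely the proofs of Theorems \ref{subdivisions} and \ref{trees}: assume for contradiction that the graph has a $k$-improper interval $t$-coloring $\alpha$, fix the universal vertex $u$, and track how much the colors can spread as one walks through the graph away from $u$. The first step is to obtain a lower bound on the spectrum width at $u$. Since $u$ is adjacent to $v^{(l_i)}_p$ for every line $l_i$ and every $1 \leq p \leq r_i$, the degree of $u$ equals $\sum_{i=1}^{n^2+n+1} r_i$. A $k$-improper coloring uses each color on at most $k$ edges at $u$, so the spectrum $S(u,\alpha)$, being an interval, has length at least $\frac{1}{k}\sum_{i=1}^{n^2+n+1} r_i$. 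Thus if $w$ and $w'$ are neighbors of $u$ realizing $\underline S(u,\alpha)=s$ and $\overline S(u,\alpha)$, then $\alpha(uw') \geq s + \frac{1}{k}\sum_{i} r_i - 1$.

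The second step is the crucial upper bound on $\alpha(uw')$ obtained by following a short path in $\widehat{}$ the graph minus $u$. Here I would exploit the projective-plane incidence structure: any two points lie on a common line, and any two lines meet in a point. Concretely, from $w'=v^{(l_j)}_q$ I want to walk back to $w=v^{(l_i)}_p$ through a bounded-length path whose internal vertices are points $k \in A_i \cap A_j$ and other apex-adjacent vertices, picking up at each step an increment controlled by the relevant vertex degrees. The key geometric fact is that a point $k$ has degree in the graph equal to $1 + \sum_{i : k \in A_i} r_i$ (one edge to $u$-side structure is absent; rather, $k$ is joined to every $v^{(l_i)}_p$ with $k \in A_i$), and each line contains exactly $n+1$ points. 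The separation of the sum into $\sum_{i=1}^{n+1}$ (the ``large'' $r_i$, paired with the $(k-1)$ coefficient) versus $\sum_{i=n+2}^{n^2+n+1}$ (appearing with coefficient $1$) in the hypothesis strongly suggests that the bounding path passes through at most $n+1$ of the point-vertices, and that the $k$-improperness is invoked at each such point to absorb up to $k-1$ extra units of color spread per incidence. I would therefore set up the telescoping inequality $\alpha(uw') \leq s + (\text{bounded path contribution})$ so that, after rearrangement, the failure of the contradiction forces exactly $\sum_{i=n+2}^{n^2+n+1} r_i - (k-1)\sum_{i=1}^{n+1} r_i \leq 2k(n+1)$, matching the stated hypothesis with its negation.

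The hard part will be pinning down the exact bounding path and the precise role of the two index-blocks $[1,n+1]$ and $[n+2,n^2+n+1]$. Unlike the tree case in Theorem \ref{trees}, where the path is unique and the bound comes directly from $M(T)$, here the underlying graph is far from a tree, so I must argue that some short path exists whose total degree contribution is small, and simultaneously that the $(k-1)$ discount on the first $n+1$ terms is legitimate. My reading is that the $n+1$ lines through a distinguished point account for the ``local'' structure around the chosen endpoints, while the remaining $n^2+n$ lines contribute to the global color spread that must be reachable from $u$; the factor $k-1$ reflects that, within a $k$-improper coloring, repetition of a color at the shared points lets one save $k-1$ per line. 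I would verify the increment estimate $\alpha(x_i x_{i+1}) \leq s + (\text{partial sum})$ along the path exactly as in the two previous proofs, being careful that the $2k(n+1)$ term on the right absorbs the two apex-edges together with the $(n+1)$ points on a line. Once the telescoping bound is established, the final rearrangement and contradiction are routine, exactly parallel to the end of the proof of Theorem \ref{subdivisions}.
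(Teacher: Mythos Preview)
Your general framework is right, but you have misidentified the structure of the bounding path and the source of the $(k-1)$ coefficient, and this would prevent you from completing the argument.

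The path from $w=v_p^{(l_{i_0})}$ to $w'=v_q^{(l_{j_0})}$ in $G-u$ has length exactly two: since any two lines of a projective plane meet in a point, there is a point $k_0\in A_{i_0}\cap A_{j_0}$, and $w-k_0-w'$ is a path. It does \emph{not} pass through $n+1$ point-vertices. The degree of each $v$-vertex is $n+2$ (one edge to $u$, and $n+1$ edges to the points of its line), so $\alpha(k_0 w)\le s+(n+1)$. Now the crucial step: the point $k_0$ lies on exactly $n+1$ lines, so $d(k_0)=\sum_{i:\,k_0\in A_i} r_i\le \sum_{i=1}^{n+1} r_i$, the last inequality holding because the $r_i$ are sorted decreasingly. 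This is where the block $[1,n+1]$ enters, not because the path touches $n+1$ points. One then gets $\alpha(k_0 w')\le s+n+\sum_{i=1}^{n+1} r_i$ and finally $\alpha(uw')\le s+2n+1+\sum_{i=1}^{n+1} r_i$.

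Combining with your lower bound $\alpha(uw')\ge s+\frac{1}{k}\sum_{i=1}^{n^2+n+1} r_i-1$ and clearing the $k$ gives $\sum_{i=1}^{n^2+n+1} r_i\le 2k(n+1)+k\sum_{i=1}^{n+1} r_i$, i.e.\ $\sum_{i=n+2}^{n^2+n+1} r_i-(k-1)\sum_{i=1}^{n+1} r_i\le 2k(n+1)$. The $(k-1)$ is therefore pure algebra from moving $k\sum_{i=1}^{n+1} r_i$ across and splitting the total sum; it is \emph{not} obtained by ``invoking $k$-improperness at each point to absorb $k-1$ extra units,'' and no such repetition argument is needed or used. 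Also note that point-vertices are not adjacent to $u$, so your degree formula $1+\sum_{i:k\in A_i} r_i$ has a spurious $+1$.
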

\begin{proof}
Suppose, to the contrary, that the graph
$G=Erd(r_{1},\ldots,r_{n^{2}+n+1})$ has a $k$-improper interval
$t$-coloring $\alpha$ for some $t\geq \frac{\underset{i=1}{\overset{n^{2}+n+1}{\sum
}}r_{i}}{k}$.

Consider the vertex $u$ of $G$, and let $v_{p}^{(l_{i_{0}})}$ and
$v_{q}^{(l_{j_{0}})}$ be two vertices adjacent to $u$ such that
$\alpha\left(uv_{p}^{(l_{i_{0}})}\right)=\underline{S}(u,\alpha)=s$ and
$\alpha\left(uv_{q}^{(l_{j_{0}})}\right)=\overline{S}(u,\alpha)\geq s+\frac{\underset{i=1}{\overset{n^{2}+n+1}{\sum
}}r_{i}}{k}-1$.

If $l_{i_{0}}=l_{j_{0}}$, then, by the construction of $G$
%$Erd(r_{1},\ldots,r_{n^{2}+n+1})$, 
there exists $k_{0}$ such that
$k_{0}v_{p}^{(l_{i_{0}})}$, $k_{0}v_{q}^{(l_{j_{0}})}\in
E(G)$.
If, on the other hand $l_{i_{0}}\neq l_{j_{0}}$,
then $l_{i_{0}}\cap l_{j_{0}}\neq \emptyset$;
so again,
by the construction of $G$, 
there exists $k_{0}$ such that
$k_{0}v_{p}^{(l_{i_{0}})}$, $k_{0}v_{q}^{(l_{j_{0}})}\in
E(G)$.

Now, 
we have 
$d\left(v_{p}^{(l_{i_{0}})}\right)=d\left(v_{q}^{(l_{j_{0}})}\right)=n+2$
and
\begin{center}
$\alpha\left(k_{0}v_{p}^{(l_{i_{0}})}\right)\leq
s+d\left(v_{p}^{(l_{i_{0}})}\right)-1=s+n+1$,
\end{center}
and thus
\begin{center}
$\alpha\left(k_{0}v_{q}^{(l_{j_{0}})}\right)\leq
s+n+1+d(k_{0})-1\leq s+n+\underset{i=1}{\overset{n+1}{\sum }}r_{i}$.
\end{center}

This implies that
\begin{center}
$s+\frac{\underset{i=1}{\overset{n^{2}+n+1}{\sum
}}r_{i}}{k}-1\leq \alpha\left(uv_{q}^{(l_{j_{0}})}\right)=\overline{S}(u,\alpha)\leq
s+n+\underset{i=1}{\overset{n+1}{\sum
}}r_{i}+d\left(v_{q}^{(l_{j_{0}})}\right)-1=s+2n+1+\underset{i=1}{\overset{n+1}{\sum
}}r_{i}$.
\end{center}

Hence, $\underset{i=n+2}{\overset{n^{2}+n+1}{\sum }}r_{i}-(k-1)\underset{i=1}{\overset{n+1}{\sum }}r_{i}\leq
2k(n+1)$, which is a contradiction.
\end{proof}

Using Theorem \ref{Erdos} we can generate infinite families of graphs
with large impropriety. For example, if 
$r_1 = r_2 = \dots = r_{n^2+n+1} = r$, where $r$ is some constant, then $\muint\left(Erd(r_{1},\ldots,r_{n^{2}+n+1})\right)>k$ if $n^{2}r-(k-1)r(n+1)>2k(n+1)$.
	
%%%%%%%%%%%%%%%%%%%%%%%%%%%%%%	
	
\section{Upper bounds on the impropriety of graphs}
	
	In this section, we give general upper bounds on $\muint(G)$
	for several different families of graphs.
	There is a prominent line of research on interval colorings
	of bipartite graphs;
	we begin this section by considering improper interval colorings
	of bipartite graphs.

	\subsection{Bipartite graphs}
	
	 As mentioned above, Hansen \cite{Hansen} proved 
	that if $G$ is bipartite and 
	satisfies that $\Delta(G) \leq 3$, then $G$ has an interval coloring,
	while the question 
	of interval colorability
	for bipartite graphs of maximum degree
	$4$ is open. However, using Hansen's result and K\"onig's edge coloring
	theorem, we deduce the following upper bound.

	\begin{theorem}
		If $G$ is bipartite, then 
		\begin{itemize}
			
			\item[(i)] $\muint(G) \leq \left\lceil 
			\frac{\Delta(G)}{\delta(G)} \right\rceil$;
			
			\item[(ii)] $\muint(G) \leq \left\lceil  
			\frac{\Delta(G)}{3} \right\rceil$.
			\end{itemize}
		\end{theorem}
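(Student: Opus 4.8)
The plan is to prove both bounds by partitioning $E(G)$ into edge-disjoint subgraphs and colouring each piece so that the spectrum at every vertex is \emph{forced} to be an interval. The engine I would use is a \emph{balanced} (equitable) edge partition of a bipartite graph: for any $t\geq 1$, the edges of a bipartite graph split into $t$ subgraphs $H_{1},\dots,H_{t}$ with $\lfloor d_{G}(v)/t\rfloor \le d_{H_{i}}(v)\le \lceil d_{G}(v)/t\rceil$ for every vertex $v$ and every $i$. Such a partition exists for all bipartite graphs (it can be obtained by an Euler-tour splitting argument resting on K\"onig's theorem, in the spirit of de Werra's balanced colourings), and both parts are built on top of it.

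For (i) I would take $t=\delta=\delta(G)$ and apply the balanced partition to get $H_{1},\dots,H_{\delta}$. Since $d_{G}(v)\ge \delta$ for every $v$, we have $d_{H_{i}}(v)\ge \lfloor d_{G}(v)/\delta\rfloor \ge 1$, so every vertex is incident with an edge of every $H_{i}$; moreover $d_{H_{i}}(v)\le \lceil d_{G}(v)/\delta\rceil \le \lceil \Delta(G)/\delta\rceil$. Now colour all edges of $H_{i}$ with the single colour $i$. At each vertex $v$ the set of colours appearing is exactly $\{1,\dots,\delta\}=[1,\delta]$, which is an interval, and colour $i$ is used $d_{H_{i}}(v)\le \lceil \Delta(G)/\delta\rceil$ times. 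Hence this is a $\lceil \Delta(G)/\delta\rceil$-improper interval colouring, which gives (i). This is the same device as in the proof of Proposition~\ref{prop:WheelReg}, where the $2$-factors play the role of the balanced parts.

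For (ii), I would first observe that if $\delta(G)\ge 3$ then $\lceil \Delta(G)/\delta(G)\rceil \le \lceil \Delta(G)/3\rceil$, so the bound already follows from (i); the genuine content is the case $\delta(G)\le 2$. Here I would apply the balanced partition with $t=\lceil \Delta(G)/3\rceil$, producing subgraphs $H_{1},\dots,H_{t}$ with $\Delta(H_{i})\le \lceil \Delta(G)/t\rceil \le 3$. Each $H_{i}$ is bipartite with maximum degree at most $3$, so by Hansen's theorem (Theorem~\ref{th:Hansen}) it admits an interval colouring $\alpha_{i}$. The plan is then to assemble the $\alpha_{i}$ into one colouring by placing them in consecutive colour bands and letting consecutive bands overlap just enough to close the gaps that appear at low-degree vertices.

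The main obstacle is exactly this assembly step. Monochromatic colouring (as in (i)) no longer works when $\delta\le 2$: a vertex of degree $2$ may have its two edges in two non-consecutive parts $H_{i},H_{j}$, so its spectrum $\{i,j\}$ need not be an interval. Replacing the single colours by the Hansen colourings is meant to repair such gaps, but because the starting value $\underline{S}(v,\alpha_{i})$ of each interval colouring varies from vertex to vertex, a single global shift of $\alpha_{i}$ cannot align all vertices at once. The crux is therefore to choose the balanced partition and the band offsets together so that, at every vertex, the per-part spectra concatenate into a single interval while no colour is used more than $\lceil \Delta(G)/3\rceil$ times; I expect the delicate point to be this coordination, namely ensuring that the parts in which any fixed low-degree vertex participates are consecutive and that their Hansen spectra actually meet.
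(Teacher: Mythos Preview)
For part (i) your argument is correct and is essentially the paper's proof in different language. The paper splits each vertex $v$ into $\lceil d_G(v)/\delta\rceil$ pieces of degree at most $\delta$, takes a proper $\delta$-edge-colouring of the split graph via K\"onig's theorem, and pulls it back to $G$; the colour classes of that colouring are precisely an equitable $\delta$-partition of $E(G)$, and since every vertex of $G$ contributes at least one piece of full degree $\delta$, every colour appears at every vertex and the spectrum there is $[1,\delta]$. So the two arguments coincide.

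For part (ii) there is a real gap. You reduce to the case $\delta(G)\le 2$, propose to partition $E(G)$ equitably into $t=\lceil\Delta/3\rceil$ subgraphs of maximum degree $3$, interval-colour each one by Hansen's theorem, and then glue the $t$ colourings together in consecutive bands. But, as you yourself concede, the gluing is the entire difficulty and you do not carry it out: the left endpoints $\underline S(v,\alpha_i)$ vary with $v$, so no global shift aligns them all, and nothing in your construction forces the parts meeting a fixed low-degree vertex to be consecutive. The paper sidesteps this completely by reusing the device from (i): split every vertex of $G$ into vertices of degree at most $3$, apply Hansen's theorem \emph{once} to the resulting bipartite graph, and pull that single interval colouring back to $G$. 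Each original vertex is split into at most $\lceil\Delta/3\rceil$ pieces, which bounds the multiplicity of every colour, and because only one interval colouring is in play there is no coordination problem between separate pieces. The idea you are missing is that vertex-splitting (rather than an edge partition into many subgraphs) lets you invoke the black-box interval-colouring result on a single graph and avoid the merging step altogether.
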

		
		\begin{proof}
			Let $G$ be a bipartite graph.
			To prove (i), we construct a new bipartite graph $H$ from $G$
			by proceeding in the following way: for every vertex $v$ of degree
			at least $\delta(G)+1$, we split $v$ into as many vertices
			of degree $\delta(G)$ as possible, and one vertex of degree less than
			$\delta(G)$. Since the graph $H$ has maximum degree $\delta(G)$, 
			by K\"onig's edge coloring theorem, it
			has a proper $\delta(G)$-edge coloring $\varphi$. Let $\varphi_G$
			be the coloring of $G$ induced by this coloring of $H$.
			Since each vertex of $G$ is split into at most
			$\left\lceil 
			\frac{\Delta(G)}{\delta(G)} \right\rceil$ vertices,
			the coloring $\varphi_G$ is a $\left\lceil 
			\frac{\Delta(G)}{\delta(G)} \right\rceil$-improper interval
			coloring of $G$ using $\delta(G)$ colors.
			
			Part (ii) can be proved similarly to part (i), except that 
			we apply Theorem \ref{th:Hansen} to the graph
			obtained from $G$ by splitting
			every vertex of
			$G$ into vertices of degree at most three.
		\end{proof}
		
		If $G$ is bipartite, and, in addition,
		has no vertices of degree $3$, then
		we have the following:
		
		\begin{proposition}
			If $G$ is bipartite and has no vertices of degree three,
			then $\muint(G) \leq \left\lceil 
			\frac{\Delta(G)}{4} \right\rceil$.
		\end{proposition}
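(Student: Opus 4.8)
The plan is to mimic the splitting strategy used for parts (i) and (ii), reducing every vertex to degree at most $4$, but to invest extra care in the coloring so that the vertices of degree $2$ end up with two \emph{consecutive} colors. First I would dispose of the easy case $\Delta(G)\le 3$: since $G$ has no vertex of degree $3$, this forces $\Delta(G)\le 2$, so $G$ is a disjoint union of paths and even cycles, which is interval colorable, giving $\muint(G)=1=\lceil\Delta(G)/4\rceil$. Hence I may assume $\Delta(G)\ge 4$.

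Next I would build a bipartite graph $H$ of maximum degree $4$ by splitting each vertex $v$ with $d_G(v)\ge 5$ into $\lceil d_G(v)/4\rceil$ vertices, at least one of which has degree exactly $4$ (possible since $\lfloor d_G(v)/4\rfloor\ge 1$); vertices of degree $1$, $2$ or $4$ are left untouched. As in part (i), any proper edge coloring of $H$ with color set $\{1,2,3,4\}$ induces a coloring $\varphi_G$ of $G$ whose impropriety at each vertex is at most its number of pieces, hence at most $\lceil\Delta(G)/4\rceil$. Moreover the spectrum at a vertex of $G$ of degree $\ge 4$ is automatically the whole interval $\{1,2,3,4\}$, because such a vertex owns a degree-$4$ piece that sees all four colors while every other piece contributes colors from $\{1,2,3,4\}$; a vertex of degree $1$ sees a single color. \emph{The only spectra that need not form an interval are those of the original degree-$2$ vertices}, and making these consecutive is the entire difficulty.

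To control the degree-$2$ vertices I would not take an arbitrary proper $4$-coloring but rather construct one from a decomposition $E(H)=E_1\sqcup E_2$ into two subgraphs $H_1=(V(H),E_1)$ and $H_2=(V(H),E_2)$, each of maximum degree at most $2$, in which every degree-$4$ vertex is split evenly (two edges in each part) while every original degree-$2$ vertex is kept \emph{monochromatic}, i.e.\ has both of its edges in the same part. Given such a decomposition, I would interval-color $H_1$ with colors $\{1,2\}$ and $H_2$ with colors $\{3,4\}$; since $H$ is bipartite, each component of $H_i$ is a path or an even cycle, so a degree-$2$ vertex of $H_i$ genuinely sees both of its colors. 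Then a degree-$4$ vertex sees $\{1,2\}\cup\{3,4\}=\{1,2,3,4\}$, while an original degree-$2$ vertex sees either $\{1,2\}$ or $\{3,4\}$ --- consecutive in each case --- so the union of the piece-spectra at every vertex of $G$ is an interval, as required.

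It remains to produce the decomposition, which is where the real work lies. My plan is an Eulerian/parity argument: suppress each original degree-$2$ vertex (replacing a maximal path $x,v_1,\dots,v_t,y$ of degree-$2$ vertices by a single edge $xy$), handle separately any component of $G$ consisting entirely of degree-$2$ vertices as an even cycle, and take a balanced $2$-edge-coloring of the resulting multigraph $H'$ --- one in which each vertex receives $\lfloor d/2\rfloor$ and $\lceil d/2\rceil$ edges of the two colors, obtained by closing up the odd-degree vertices and traversing Euler tours. Pulling this coloring back to $H$ colors both edges of a suppressed degree-$2$ vertex alike (monochromatic) and, since $\lceil 4/2\rceil=2$, splits every degree-$4$ vertex exactly $2$--$2$; thus both parts have maximum degree $2$. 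I expect the fiddly points to be the multigraph technicalities (multiple edges and loops created by suppressing ``petal'' cycles through a single high-degree vertex, which I would pre-assign wholesale to one part) and the verification that bipartiteness of $H$ is exactly what guarantees the two colors of each $H_i$ actually appear at its degree-$2$ vertices; the conceptual heart, however, is simply the observation that the obstruction lives entirely at the degree-$2$ vertices and is removed by keeping their two edges together.
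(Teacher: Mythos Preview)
Your plan is sound and follows the same skeleton as the paper's proof: split down to maximum degree $4$, produce a proper $4$-edge-coloring of the split graph in which every degree-$4$ vertex receives all of $\{1,2,3,4\}$ and every degree-$2$ vertex receives $\{1,2\}$ or $\{3,4\}$, then reassemble. The difference is purely in how the decomposition $E_1\sqcup E_2$ is manufactured. You suppress the degree-$2$ vertices and appeal to a balanced $2$-edge-coloring of the resulting multigraph via Euler tours, which (as you anticipate) brings in loops from petals, multi-edges, and start-vertex parity issues---all surmountable, but each needing a sentence. The paper instead takes two copies of the split graph $G'$, joins corresponding vertices of odd degree to obtain a graph $G''$ with all degrees in $\{2,4\}$, then adds a loop at every degree-$2$ vertex to reach a $4$-regular multigraph $H$, and applies Petersen's theorem. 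In the resulting $2$-factor decomposition the loop at a former degree-$2$ vertex occupies one factor, forcing its two real edges into the other---your ``monochromatic degree-$2$'' condition, obtained for free. Since $G''$ is bipartite, each factor restricted to $G''$ is a union of even cycles, and alternating $\{1,2\}$ on one and $\{3,4\}$ on the other finishes. Both routes are correct; the paper's is shorter and absorbs precisely the fiddly points you flagged into the single invocation of Petersen's theorem.
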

		\begin{proof}
			We proceed as in the preceding proof. From the
			bipartite graph $G$, we construct a graph $G'$ by splitting
			every vertex of degree at least five into as many vertices
			of degree four as possible, and one vertex of degree at most three.
			From $G'$, we construct a graph $G''$ with even vertex degrees
			by taking two
			copies of the graph $G'$ and joining any two corresponding vertices
			of degree three or one by an edge.
			Finally, we construct a
			$4$-regular multigraph $H$ by adding a loop at every
			vertex of degree two. Now, by Petersen's theorem,
			$H$ has a decomposition into two $2$-factors $F_1$ and $F_2$.
			In $G''$, the subgraph $F_i$ corresponds to a collection
			of even cycles, $i=1,2$. By coloring the edges of every
			cycle in $G''$ corresponding to a cycle of $F_1$
			alternately by colors $1,2$; and the
			edges of every cycle corresponding to a cycle of $F_2$
			alternately by colors $3,4$,
			we obtain an interval edge coloring $\varphi$ of $G''$, where
			every vertex of degree $2$ has colors $1$ and $2$, or $3$ and $4$,
			on its incident edges.
			
			Since there are no vertices of degree three in $G$,
			and each vertex of $G$ is split into at most $\left\lceil 
			\frac{\Delta(G)}{4} \right\rceil$ different vertices in $G'$,
			the coloring $\varphi$ corresponds to a 
			$\left\lceil  \frac{\Delta(G)}{4} \right\rceil$-improper
			interval edge coloring of $G$.
		\end{proof}

		For bipartite graphs with small vertex degrees we deduce some
		consequences of the above results.
		
		\begin{corollary}
			If $G$ is bipartite and $\Delta(G) \leq 6$
			then $\muint(G) \leq 2$.
		\end{corollary}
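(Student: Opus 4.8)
The plan is to derive this immediately from part (ii) of the preceding theorem on bipartite graphs. Since $G$ is bipartite, that result already gives $\muint(G) \leq \left\lceil \frac{\Delta(G)}{3} \right\rceil$ with no further hypotheses, so the only thing left to do is a numerical estimate. First I would note that $\left\lceil \frac{\Delta(G)}{3} \right\rceil$ is nondecreasing in $\Delta(G)$, so the assumption $\Delta(G) \leq 6$ forces $\left\lceil \frac{\Delta(G)}{3} \right\rceil \leq \left\lceil \frac{6}{3} \right\rceil = 2$. Chaining the two inequalities yields $\muint(G) \leq 2$, which is exactly the claim.

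There is essentially no obstacle here: the corollary is a direct numerical consequence of the bound $\muint(G) \leq \left\lceil \frac{\Delta(G)}{3} \right\rceil$, and the single inequality already covers every admissible value of $\Delta(G)$ at once. If one wanted to make the statement more transparent, one could split into cases according to $\Delta(G)$: for $\Delta(G) \in \{1,2,3\}$ one has $\left\lceil \frac{\Delta(G)}{3} \right\rceil = 1$, so in fact $\muint(G) = 1$ (and indeed $G$ is interval colorable by Theorem~\ref{th:Hansen} when $\Delta(G) \leq 3$), while for $\Delta(G) \in \{4,5,6\}$ one has $\left\lceil \frac{\Delta(G)}{3} \right\rceil = 2$. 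This case split is not strictly needed, however, since the single bound subsumes all of it.

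As an alternative route for the subfamily of bipartite graphs with no vertex of degree three, one could instead invoke the preceding proposition to get the sharper estimate $\muint(G) \leq \left\lceil \frac{\Delta(G)}{4} \right\rceil \leq \left\lceil \frac{6}{4} \right\rceil = 2$. I would not pursue this, though, because part (ii) of the theorem already handles the fully general case, including graphs that do contain vertices of degree three, so appealing to part (ii) gives the shortest and most uniform argument.
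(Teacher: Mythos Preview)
Your proposal is correct and matches the paper's intended derivation: the corollary is stated without proof as an immediate consequence of part (ii) of the preceding theorem, and your computation $\muint(G) \leq \left\lceil \Delta(G)/3 \right\rceil \leq \left\lceil 6/3 \right\rceil = 2$ is exactly that consequence.
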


		\begin{corollary}
			If $G$ is bipartite, Eulerian and $\Delta(G) \leq 8$
			then $\muint(G) \leq 2$.
		\end{corollary}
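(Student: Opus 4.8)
The plan is to observe that the hypothesis that $G$ is Eulerian is precisely what lets us invoke the preceding proposition on bipartite graphs with no vertex of degree three. Recall that in an Eulerian graph every vertex has even degree. In particular, no vertex of $G$ can have degree $3$, so $G$ satisfies the hypothesis of that proposition.

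Concretely, I would argue as follows. Since $G$ is Eulerian, all of its vertex degrees are even; combined with $\Delta(G)\leq 8$, this means every degree lies in $\{0,2,4,6,8\}$, and in particular $G$ has no vertices of degree three. The preceding proposition therefore applies and yields
\[
\muint(G)\leq\left\lceil\frac{\Delta(G)}{4}\right\rceil.
\]
Finally, using $\Delta(G)\leq 8$ gives $\left\lceil\Delta(G)/4\right\rceil\leq\left\lceil 8/4\right\rceil=2$, which is the desired bound.

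There is essentially no genuine obstacle here; the statement is a direct specialization of the earlier proposition once one notices that ``Eulerian'' rules out degree-three vertices. The only point worth checking carefully is that the construction in the proof of that proposition (splitting vertices of degree at least five into degree-four pieces plus one leftover vertex of degree at most three, then pairing up the odd/low-degree vertices across two copies and adding loops to reach $4$-regularity) indeed never needs to treat a degree-three vertex specially for our $G$; but since $G$ has no such vertices, each vertex of $G$ is split into at most $\lceil\Delta(G)/4\rceil\le 2$ pieces, so the resulting interval edge coloring is at worst $2$-improper on $G$. This completes the argument.
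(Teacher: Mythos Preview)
Your proof is correct and matches the paper's intended derivation: the corollary is stated without proof in the paper as an immediate consequence of the preceding proposition, and you have supplied exactly that deduction (Eulerian $\Rightarrow$ all degrees even $\Rightarrow$ no degree-three vertices, hence $\muint(G)\le\lceil\Delta(G)/4\rceil\le 2$). The only cosmetic remark is that an Eulerian graph is connected, so degree $0$ does not actually occur, but this is harmless.
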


		In general, for $k\geq 2$, 
		it would be interesting
		to determine or bound the least integer $f_{\text{bip}}(k)$ for which there
		exists a graph $G$ with maximum degree $f_{\text{bip}}(k)$
		satisfying $\muint(G) =k$. 
		Even the case $k=2$ of this problem is open.
		It is known, however, that 
		$4 \leq f_{\text{bip}}(2) \leq 11$, see e.g. \cite{PetrosKhacha}.
		Moreover, by the results of Hertz graphs, $f_{\text{bip}}(3) \leq 51$,
		and by the above corollary
		$f_{\text{bip}}(3) \geq 7$.

\subsection{General graphs}
					
		Let us now deduce some upper bounds for general graphs.
		As for bipartite graphs, we define $f(k)$ as the smallest
		integer such that there exists a graph $G$ with maximum
		degree $f(k)$ and $\muint(G)=k$.
		The smallest graphs with impropriety $2$ are odd cycles;
		thus $f(2)=2$.
		
		We believe that the following question is of particular interest:

		\begin{problem}
		\label{maxdegree}
			Determine $f(3)$, that is, determine the least integer
			$\Delta$, such that there is a graph $G$ with maximum degree $\Delta$
			satisfying $\muint(G)=3$.
		\end{problem}

		The following result shows that $f(3) > 5$ in Problem
		\ref{maxdegree}.
		
		\begin{theorem}
		\label{prop:maxdeg5}
			If $G$ is a graph with $\Delta(G) \leq 5$, then
			$\muint(G) \leq 2$.			
		\end{theorem}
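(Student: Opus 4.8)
The plan is to split on the value of $\Delta(G)$; throughout I may assume $G$ is connected, handling components separately. If $\Delta(G)\le 4$, I would embed $G$ in a $4$-regular multigraph, apply Petersen's theorem (Theorem~\ref{th:Petersen}) to decompose it into two $2$-factors, and restrict back to $G$; this partitions $E(G)$ into two subgraphs $H_1,H_2$ of maximum degree at most $2$. Coloring $H_i$ with color $i$ yields a coloring in which each color meets every vertex at most twice and every spectrum is a subset of $\{1,2\}$, hence an interval. So $\muint(G)\le 2$, and the whole difficulty is concentrated in the case $\Delta(G)=5$.

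For $\Delta(G)=5$ I would use three colors and designate color $2$ as the ``middle'' color. Since a vertex of degree $5$ forces at least $\lceil 5/2\rceil=3$ colors, the only way the interval property can fail at a vertex is the pattern $\{1,3\}$ (colors $1$ and $3$ present, $2$ absent), and this pattern is impossible at a vertex incident to a color-$2$ edge. Accordingly, the heart of the construction is to fix the color-$2$ subgraph first: I would find a subgraph $R\subseteq G$ with $\Delta(R)\le 2$ that covers every vertex of degree at least $3$, and color all edges of $R$ with color $2$.

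Given such an $R$, consider $G'=G-E(R)$. Every vertex of degree $\ge 3$ loses at least one edge, so $\Delta(G')\le 4$, while every vertex left uncovered by $R$ has degree at most $2$ in $G$. I would suppress each uncovered vertex of degree exactly $2$ in $G'$ (replacing its two incident edges by a single edge joining its neighbours), obtaining a multigraph $G''$ with $\Delta(G'')\le 4$; Petersen's theorem (applied to a $4$-regular supermultigraph, loops allowed, then restricting) decomposes $G''$ into two subgraphs of maximum degree at most $2$, which I color $1$ and $3$ and then pull back to $G'$. By construction each suppressed vertex receives both of its edges in one color, so its spectrum is a single color; a vertex covered by $R$ carries color $2$ together with at most two edges of each of colors $1,3$, so its spectrum lies in $\{\,\{2\},\{1,2\},\{2,3\},\{1,2,3\}\,\}$; and an uncovered vertex of degree at most $2$ sees at most one color, or color $2$ together with one further color. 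In every case the spectrum is an interval and no color appears more than twice, which gives a $2$-improper interval coloring of $G$.

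The main obstacle is the existence of $R$, and this is exactly where the hypothesis $\Delta(G)\le 5$ is used: for larger maximum degree it can fail, as in $K_{3,7}$, where the seven degree-$3$ vertices cannot all be covered by a subgraph of maximum degree $2$ because the three vertices on the other side provide only $6$ incident slots. I would establish existence of $R$ by a degree-constrained-subgraph (alternating-path) argument: start from a subgraph of maximum degree at most $2$ covering the greatest number of vertices of degree $\ge 3$; if some such vertex $v$ were uncovered, then each of its $\ge 3$ neighbours would be saturated in $R$ (degree $2$), and rerouting along an alternating path would either increase the coverage, contradicting maximality, or expose a bottleneck set whose size is incompatible with $\Delta(G)\le 5$. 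Verifying that this augmentation always terminates successfully under $\Delta(G)\le 5$ is the one genuinely delicate point; the remaining steps are routine applications of Petersen's theorem together with bookkeeping on the vertex spectra.
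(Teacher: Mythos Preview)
Your strategy for $\Delta(G)=5$ is genuinely different from the paper's. The paper removes a maximum matching $M$ in the subgraph $G_5$ induced by degree-$5$ vertices, invokes Fournier's theorem (Theorem~\ref{th:Fournier}) to properly $5$-edge-color $G-M$, extracts from that coloring a further matching $M'$ covering any remaining degree-$5$ vertices, colors $G-M-M'$ with two colors via the Petersen argument, and finally assigns the edges of $M$ and $M'$ a third color (or colour $0$, followed by a global shift). Your route instead fixes the \emph{middle} color class first, as a subgraph $R$ with $\Delta(R)\le 2$ covering every vertex of degree $\ge 3$, and then decomposes $G-E(R)$ via Petersen into colors $1$ and $3$. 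This is conceptually cleaner and sidesteps Fournier's theorem entirely; if it works, it also explains transparently why the spectrum at every vertex is forced to contain $2$.

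The gap is precisely where you locate it: the existence of $R$. Your alternating-path sketch is not a proof. After the swap ``remove $uw\in R$, add $uv$'' you may uncover $w\in S$, and you give no termination argument preventing the uncovered token from circulating forever; nor do you specify what the ``bottleneck set'' is or why $\Delta\le 5$ rules it out. The claim does appear to be true --- for example, when $\delta(G)\ge 3$ one can check the $[1,2]$-factor criterion $i(G-T)\le 2|T|$ directly, since each isolated vertex of $G-T$ sends at least $3$ edges into $T$ while $T$ absorbs at most $5|T|$ edge-ends, giving $i(G-T)\le \tfrac{5}{3}|T|<2|T|$; the mixed-degree case needs a reduction on top of this. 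But making this precise requires either the full $(g,f)$-factor machinery or a carefully structured augmenting argument, which is real work of the same order as the paper's proof, not a routine verification. Until that lemma is established, the proposal is incomplete.
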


		\begin{proof}
		If $G$ has maximum degree $2$, then trivially $\muint(G) \leq 2$.			
		Let us now consider the case when $G$ satisfies $3 \leq \Delta(G) \leq 4$;
		again, we shall use Petersen's $2$-factor theorem.
			%Let $G$ be a graph with $\Delta(G) \leq 4$.
			From $G$ we form a new graph $G'$ by taking two copies
			of $G$ and adding an edge between any two corresponding vertices
			of odd degree. From $G'$ we form a new $4$-regular graph $H$ by adding a
			loop at every vertex of degree $2$ in $G'$. By Petersen's theorem,
			$H$ has a decomposition into two $2$-factors $F_1$ and $F_2$.
			In $G'$, $F_i$ corresponds to a collection $A_i$ of cycles, $i=1,2$.
			By coloring edges of all cycles of $A_i$ by color $i$, we obtain
			a $2$-improper interval coloring $\varphi$
			of $G'$, and the result now follows
			by coloring $G$ according to the restriction of $\varphi$ to
			one of the copies of $G$ in $G'$.
		
		  Let us now consider the case when $\Delta(G) = 5$.
			Let $G_{5}$ be the subraph of $G$ induced by
			the vertices of degree $5$ in $G$. Let $M$ be a 
			maximum matching in $G_{5}$. 		Since $M$ is maximum,
			the graph $H= G - M$ either has maximum degree $4$ or 
			no two vertices of degree $5$ in $H$ are adjacent.
			It follows that
			$H$ has a proper $5$-edge coloring; 
			in the former case by Vizing's theorem,
			and in the latter case $H$ is Class 1 by Theorem \ref{th:Fournier}. 
			If $H$ has maximum degree $5$, then we set $H'=H-M'$, 
			where $M'$ is a matching covering all vertices with degree $5$ in $H$.
			If $H$ has maximum degree $4$, then we set $H'=H$ and $M' = \emptyset$.

			Now, by the argument in the preceding paragraph, 
			$H'$ has a $2$-improper interval coloring $\alpha$ with colors $1,2$ 
			such that for any vertex $v$ with degree $4$ or $3$, 
			$S(v,\alpha)=\{1,2\}$, and for any vertex $v$ with degree $2$ or $1$, 
			$S(v,\alpha)=\{1\}$ or $S(v,\alpha)=\{2\}$.
			
			Let us 
			define a new edge coloring $\beta$ of $G-M'$ 
			by coloring the edges of $M$
			as follows: 
			for every $e\in E(G-M')$, let 
		\begin{center}
        $\beta(e)=
        \left\{
        \begin{tabular}{ll}
        $\alpha(e)$, & if $e\in E(H')$,\\
        $3$, & if $e\in M$.\\
        \end{tabular}%
        \right.$
        \end{center}	
				Since $M$ is a maximum matching in $G_5$, the coloring $\beta$
				is a $2$-improper interval $3$-coloring of $G-M'$.
	   	From $\beta$ we define an edge coloring 
			$\gamma$ of $G$ as follows: for every $uv\in E(
			G)$, let 
		\begin{center}
        $\gamma(uv)=
        \left\{
        \begin{tabular}{ll}
        $\beta(uv)$, & if $uv\in E(G-M')$,\\
        $3$, & if $uv\in M'$ and $\overline{S}(u,\beta)=\overline{S}(v,\beta)=2$,\\
        $0$, & otherwise.\\
        \end{tabular}%
        \right.$
        \end{center}
				If there is an edge $e_{0}$ such that $\gamma(e_{0})=0$, 
				then we define an edge coloring $\gamma'$ of $G$ as follows: 
				$\gamma'(e)=\gamma(e)+1$ for every $e\in E(G)$. 
				It is straightforward that %if there is an edge $e_{0}$ 
				%such that $\gamma(e_{0})=0$, then 
				if this holds, then
				$\gamma'$ is a $2$-improper 
				interval $4$-coloring of $G$; otherwise $\gamma$ is a $2$-improper
				interval $3$-coloring of $G$. Thus, $\muint(G) \leq 2$. 
    \end{proof}
		     We note that the upper bound in Theorem  \ref{prop:maxdeg5} is 
		in fact sharp,
		since any regular Class 2 graph is not interval colorable.
		    
		\bigskip
		
		It also seems that graphs $G$ whose vertex degrees are sufficiently
		concentrated satisfy $\muint(G) \leq 2$; for instance,
		as pointed out above,
		any regular graph $G$ satisfies that $\muint(G)\leq 2$.
		We strengthen this observation slightly as follows.
		
		\begin{proposition}
		\label{prop:maxmindeg}
			If $G$ is a graph with $\Delta(G)- \delta(G) \leq 1$, then
			$\muint(G) \leq 2$.			
		\end{proposition}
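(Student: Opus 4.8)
The plan is to reduce to the near-regular situation and then split according to the parity of the degrees, in each case building an even-regular supergraph of $G$ to which Petersen's theorem (Theorem \ref{th:Petersen}) applies. Treating components separately, I may assume $G$ is connected. If $G$ is regular then $\muint(G)\le 2$ by Proposition \ref{prop:WheelReg}, so I may assume the vertices of $G$ have exactly the two degrees $\delta=\delta(G)$ and $\Delta=\delta+1$. The overall strategy is the one used in Proposition \ref{prop:WheelReg} and in the proof of Theorem \ref{prop:maxdeg5}: form an even-regular (multi)graph $H$ containing two copies $G_1,G_2$ of $G$, decompose $H$ into $2$-factors $F_1,\dots,F_m$ by Petersen's theorem, color every edge of $F_i$ with color $i$, and finally restrict to $G_1$. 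In $H$ every vertex then meets each color $1,\dots,m$ exactly twice, so the coloring of $H$ is a $2$-improper interval coloring with spectrum $[1,m]$ at each vertex; the whole difficulty is to guarantee that the restriction to $G_1$ keeps every spectrum an interval.

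First suppose $\delta$ is odd, so $\Delta=\delta+1$ is even. Here I take two copies $G_1,G_2$ and join the two copies of each degree-$\delta$ (i.e.\ odd-degree) vertex by a single edge, obtaining a $(\delta+1)$-regular graph $H$. After coloring its $2$-factors, restricting to $G_1$ deletes nothing at a degree-$(\delta+1)$ vertex and exactly one edge, of some color $c$, at each degree-$\delta$ vertex. Since every color occurred twice in $H$, color $c$ survives, so each spectrum stays the full interval $[1,m]$ with $m=(\delta+1)/2$ and the coloring remains $2$-improper. This case is immediate.

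Now suppose $\delta$ is even, so $\Delta=\delta+1$ is odd; this is the hard case. Reaching an even-regular graph now forces the degree-$\delta$ vertices to gain \emph{two} edges, and if the restriction removes two edges of the same interior color at such a vertex the spectrum acquires a gap. To control this I again take copies $G_1,G_2$, join the two copies of each degree-$(\delta+1)$ (odd-degree) vertex by a single edge, and join the two copies $v_1,v_2$ of each degree-$\delta$ vertex by a \emph{digon} (two parallel edges); the result $H$ is a $(\delta+2)$-regular multigraph. The key point I would establish is that $H$ admits a decomposition into $2$-factors in which the two edges of every digon lie in \emph{different} factors. Granting this, restricting to $G_1$ deletes one edge at each degree-$(\delta+1)$ vertex (safe as before) and the two digon edges, of distinct colors, at each degree-$\delta$ vertex; as each such color occurred twice, both survive and every spectrum remains the full interval $[1,m]$ with $m=(\delta+2)/2$, yielding a $2$-improper interval coloring of $G$.

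Producing this digon-splitting $2$-factorization is the heart of the argument, and the step I expect to be the main obstacle. I would obtain it from the orientation proof of Petersen's theorem: choose a balanced orientation of $H$ (in-degree $=$ out-degree $=m$ everywhere) in which the two edges of each digon point the \emph{same} way, say both $v_1\to v_2$; form the bipartite arc graph $B$ on $V(H)^+\cup V(H)^-$ with an edge $x^+y^-$ for every arc $x\to y$; and decompose the $m$-regular bipartite multigraph $B$ into $m$ perfect matchings by K\"onig's theorem (Theorem \ref{th:Konig}), each matching corresponding to a $2$-factor of $H$. A monotone digon contributes two \emph{parallel} edges $v_1^+v_2^-$ to $B$, and a perfect matching can meet $v_1^+$ only once, so these two edges fall in distinct matchings, i.e.\ distinct $2$-factors, as required. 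The existence of such a balanced orientation reduces to orienting the connected even graph $H$ minus its digons (connected because at least one odd-degree vertex links the copies) with prescribed imbalances $+2$ at each $v_1$ and $-2$ at each $v_2$, and $0$ elsewhere; these prescriptions are even and sum to zero, so a suitable orientation can be reached from any Eulerian orientation by reversing directed paths joining the deficient and excess vertices. Verifying that these reversals can be realized simultaneously — equivalently, checking the cut (Hakimi-type) condition for the prescribed imbalances — is the technical crux.
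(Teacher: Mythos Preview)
Your Petersen-based approach is genuinely different from the paper's, and for $\delta$ odd it works cleanly. The paper instead removes a maximum matching $M$ in the subgraph induced by the vertices of maximum degree, obtains a proper $\Delta(G)$-edge-colouring of $G-M$ via Vizing's theorem or Theorem~\ref{th:Fournier}, and then repairs the interval property by a local recolouring of the colour-$1$ and colour-$\Delta(G)$ edges guided by an orientation of the path/cycle subgraph they span, finally restoring $M$ with colour $\Delta(G)$.

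For $\delta$ even, however, your argument has a real gap, which you yourself flag but do not close. You need a balanced orientation of $H$ in which every digon is monotone, equivalently an orientation of $H'=H-(\text{digons})$ with prescribed imbalances $\pm 2$ at the digon endpoints, and this requires the Hakimi cut condition $|b(S)|\le e_{H'}(S,\bar S)$ for every $S\subseteq V(H')$. This is not automatic, even with the freedom to choose the direction $\epsilon_v\in\{\pm 1\}$ of each digon. For instance, when $\delta=2$ and $v,w$ are adjacent degree-$2$ vertices of $G$, taking $S=\{v_1,w_1\}$ gives $e_{H'}(S,\bar S)=2$ while $|b(S)|=2|\epsilon_v+\epsilon_w|$, forcing $\epsilon_v=-\epsilon_w$; so the digon directions must properly $2$-colour the subgraph of $G$ induced by degree-$\delta$ vertices, and larger cuts (for instance $S=V(G_1)$, where $e_{H'}(S,\bar S)$ is only the number of degree-$\Delta$ vertices) impose further simultaneous constraints. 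Your path-reversal sketch handles one pair at a time and gives no mechanism for satisfying all cuts at once. Without this, the $\delta$-even case is incomplete; the paper's edge-colouring route sidesteps these orientation subtleties entirely.
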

		\begin{proof}
			Let $G$ be a graph satisfying $\Delta(G)- \delta(G) \leq 1$,
			and let $G_{\Delta}$ be the subraph of $G$ induced by
			the vertices of maximum degree in $G$. 
			By the preceding proposition, we may assume that $\Delta(G) > 5$.
			Let $M$ be a maximum matching in $G_{\Delta}$. Since $M$ is maximum,
			the graph $H= G- M$ either has maximum degree $\Delta(G)-1$ or 
			no two vertices of degree $\Delta(G)$ in $H$ are adjacent.
			Then $H$ has a proper $\Delta(G)$-edge 
			coloring $\varphi$; in the former case
			by Vizing's theorem, and in the latter case $H$ is Class 1
			by Theorem \ref{th:Fournier}. 
			
			Let $M_1$ be the set of edges with color $1$ under $\varphi$, 
			$M_{\Delta}$ be the edges of color $\Delta(G)$ under $\varphi$,
			and consider the edge-induced subgraph $H[M_1 \cup M_\Delta]$.
			Since $\delta(H) \geq \Delta(G)-1$, this graph is
			a spanning subgraph, and, furthermore,
			every component of this graph is an even cycle or a path.
			Let $D$ be an orientation of $H[M_1 \cup M_\Delta]$
			where every vertex has indegree at most $1$ and outdegree
			at most $1$. 
			
			We define a new proper edge coloring of $H$ from $\varphi$
			by recoloring some of the edges in $H[M_1 \cup M_{\Delta}]$
			in the following way: for every arc $(a,b)$ of $D$, 
			if there is a color $c \in \{2,\dots, \Delta(G)-1\}$
			which does not appear on an edge incident with $b$ under $\varphi$,
			then we recolor the edge $ab$ with color $c$;
			if there is no such color $c$, then we retain the color of
			the edge $ab$. Denote the obtained coloring by $\varphi'$.
			Finally, we extend the coloring $\varphi'$
			to a coloring $\alpha$ of $G$ by coloring every edge of $M$ by color
			$\Delta(G)$.
			
			Let us prove that $\alpha$ is a $2$-improper interval edge
			coloring of $G$. The color $\Delta(G)$ appears at most twice
			at a vertex of $G$, and if two edges of
			$H[M_1 \cup M_{\Delta}]$, both of which are
			incident with a common vertex $u$,
			are recolored by the same color $j \in \{2,\Delta(G)-1\}$,
			then $j$ does not appear on any edge incident with $u$
			under $\varphi$. Hence, every color appears at most twice
			at any vertex of $G$.
			
			Suppose now that the colors on the edges incident with some
			vertex $v$ of $G$ under $\alpha$ does not form an interval.
			Since %$\delta(G) \geq \Delta(G)-1$ and
			$\alpha$ uses $\Delta(G)$ colors, this means that there is
			some color $j \in \{2,\dots, \Delta(G)-1\}$ that does not appear
			on an edge incident with $v$ under $\alpha$. Moreover, 
			since $\alpha$ is obtained from $\varphi$
			by recoloring only edges of color $1$ or $\Delta(G)$,
			$j$ does not appear at $v$ under $\varphi$.
			Now, since $\varphi$ is a proper $\Delta(G)$-edge coloring
			of $H$ and $\delta(H) \geq \Delta(G)-1$, we must have
			$d_H(v) = \Delta(G)-1$; and so $v$ is incident with an edge
			colored $i$ under $\varphi$, for every 
			$i \in \{1,\dots, j-1, j+1,\dots,\Delta(G)\}$;
			in particular, $v$ has degree $2$ in $H[M_1 \cup M_{\Delta}]$,
			and therefore one of the edges in $H[M_1 \cup M_{\Delta}]$
			would have been recolored $j$ in the process of constructing
			$\varphi'$  of $\varphi$. This is a contradiction, and so
			it follows that $\alpha$ is a $2$-improper interval edge coloring
			of $G$.			
		\end{proof}

		Using the preceding proposition, we can prove the following,
		by splitting vertices.
		
		\begin{proposition}
			If $G$ is a graph, then
			$\muint(G) \leq 2\left\lceil
			\frac{\Delta(G)}{\delta(G)} \right\rceil$.
		\end{proposition}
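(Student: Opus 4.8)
The plan is to reduce to the regular case by splitting the high-degree vertices, exactly as in the bipartite bounds proved above, and then to invoke the (essentially Petersen-based) coloring underlying the preceding results. Write $d = \delta(G)$ and $m = \left\lceil \Delta(G)/\delta(G)\right\rceil$. For each vertex $v$ with $d_{G}(v) = qd + r$ (where $0 \le r < d$), I would split $v$ into $q$ copies of degree $d$ together with, when $r > 0$, one further copy of degree $r$, distributing the incident edges accordingly. Since $d_{G}(v) \ge d$ we have $q \ge 1$, so every original vertex produces at least one \emph{full} copy of degree exactly $d$, and the number of copies of $v$ is $\left\lceil d_{G}(v)/d\right\rceil \le m$. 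Denote the resulting graph by $G'$; it has maximum degree $d$.

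Next I would enlarge $G'$ to a $d$-regular graph $G^{*}$ by adding auxiliary vertices and edges incident only to the copies of degree less than $d$ (the standard embedding of a graph of maximum degree $d$ into a $d$-regular graph, possibly using one extra vertex for parity). Being regular, $G^{*}$ satisfies the hypothesis of Proposition \ref{prop:maxmindeg}; more to the point, the coloring furnished by Petersen's theorem in the manner of Proposition \ref{prop:WheelReg} gives a $2$-improper interval coloring $\alpha$ of $G^{*}$ in which \emph{every} vertex has spectrum exactly $\left[1, \left\lceil d/2\right\rceil\right]$, each color occurring at most twice. For even $d$ one colors the $2$-factors of a Petersen decomposition; for odd $d$ one first passes to two copies joined by a perfect matching, colors as before, and restricts, noting that deleting one edge per vertex cannot destroy any color since each appeared twice.

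I would then restrict $\alpha$ to $G'$, deleting the auxiliary edges. These deletions occur only at copies of degree less than $d$, so every full copy retains its full spectrum $\left[1, \left\lceil d/2\right\rceil\right]$, while the remaining copies keep spectra contained in $\left[1, \left\lceil d/2\right\rceil\right]$. Finally, merging the copies of each vertex $v$ back into $v$ recovers $G$: the spectrum of $v$ is the union of the spectra of its copies, which equals $\left[1, \left\lceil d/2\right\rceil\right]$ because $v$ has a full copy, and is therefore an interval; and since each color appears at most twice among the edges at each of the at most $m$ copies, it appears at most $2m$ times at $v$. Thus the induced coloring of $G$ is a $2\left\lceil \Delta(G)/\delta(G)\right\rceil$-improper interval coloring, giving the claimed bound.

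The main obstacle is the interval condition after merging: an arbitrary $2$-improper interval coloring of the regular graph $G^{*}$ (such as the one produced abstractly by Proposition \ref{prop:maxmindeg}) could assign different color-intervals to different full copies of the same vertex, and the union of these need not be an interval. This is precisely why I would use the explicit Petersen coloring, in which every vertex sees the \emph{same} interval $\left[1,\left\lceil d/2\right\rceil\right]$; the companion point, that restricting from the regular (or doubled) graph preserves each spectrum because every color occurs twice, then makes the deletion of auxiliary and matching edges harmless.
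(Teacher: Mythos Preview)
Your argument is correct and follows essentially the same route as the paper: split each vertex into at most $\left\lceil \Delta(G)/\delta(G)\right\rceil$ pieces of degree at most $\delta(G)$, embed the resulting graph in a $\delta(G)$-regular supergraph, take a $2$-improper interval coloring of that regular graph, and pull it back. The paper is terser---it cites Proposition~\ref{prop:maxmindeg} for the $2$-improper coloring of the regular supergraph and asserts that the induced coloring works---whereas you explicitly invoke the Petersen-based coloring of Proposition~\ref{prop:WheelReg} so that every vertex of the supergraph receives the \emph{same} spectrum $[1,\lceil \delta(G)/2\rceil]$, and you correctly note that this uniformity is exactly what guarantees the merged spectra remain intervals. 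Your extra care here is warranted; the paper's one-line ``this coloring induces'' is really relying on the same Petersen construction underneath.
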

		\begin{proof}
			We proceed as before: from $G$ we form a new graph $G'$
			by splitting every vertex of degree at least $\delta(G)+1$
			into as many vertices of degree exactly $\delta(G)$ as possible, 
			and one vertex of degree at most $\delta(G)$.
			Let $H$ be a $\delta(G)$-regular supergraph of $G'$.
			By Proposition \ref{prop:maxmindeg}, $H$ has a $2$-improper
			interval edge coloring. %It is straightforward that
			This coloring induces a $2\left\lceil
			\frac{\Delta(G)}{\delta(G)} \right\rceil$-improper interval coloring
			of $G$.
		\end{proof}
		
		Finally, we have the following general upper bound.

		\begin{theorem}
		\label{prop:genbound}
			If $G$ is a graph with $\Delta(G) \geq 6$,
			then $\muint(G) \leq \left\lceil
			\frac{\Delta(G)}{2} \right\rceil$.
		\end{theorem}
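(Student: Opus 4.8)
The plan is to reduce the general case to the ``balanced degree'' situation handled by Proposition~\ref{prop:maxmindeg}, using a vertex-splitting argument but with a splitting target chosen more cleverly than $\delta(G)$. The key observation is that to get an improper bound of $\lceil \Delta/2 \rceil$, I would split every vertex so that the resulting graph is (nearly) $2$-regular at each original vertex's image --- that is, I want each original vertex of degree $d$ to be split into roughly $\lceil d/2 \rceil$ pieces, each of degree $2$ (with possibly one leftover piece of degree $1$). Concretely, from $G$ I form a graph $G'$ by splitting every vertex $v$ into $\lceil d_G(v)/2\rceil$ vertices, all of degree $2$ except possibly one of degree $1$ (when $d_G(v)$ is odd). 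Then $G'$ has maximum degree $2$ and minimum degree $1$, so $\Delta(G')-\delta(G')\le 1$, and by Proposition~\ref{prop:maxmindeg} the graph $G'$ admits a $2$-improper interval edge coloring.

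The second step is to transfer this coloring back to $G$ and count the impropriety. Each edge of $G$ corresponds to exactly one edge of $G'$, so a coloring $\varphi$ of $G'$ induces a coloring $\alpha$ of $G$. A color appears at most twice at each vertex of $G'$; since each vertex $v\in V(G)$ is the amalgamation of at most $\lceil d_G(v)/2\rceil \le \lceil \Delta(G)/2\rceil$ split vertices, each color can appear at most $2\lceil\Delta(G)/2\rceil$ times at $v$. This is too weak by a factor of two, so the naive counting does not immediately give the desired bound; the refinement I would pursue is to arrange that each split piece of degree $2$ contributes only \emph{one} new color to the spectrum at $v$, not two, by forcing the two edges at each degree-$2$ piece to be \emph{consecutive} rather than equal. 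In other words, I would not merely invoke Proposition~\ref{prop:maxmindeg} as a black box, but ensure the induced $2$-improper interval coloring of $G'$ uses each color at most once per split piece, so that the spectra at the pieces of $v$ tile an interval of length about $d_G(v)/2$.

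The cleanest way to realize this is to work directly with the Petersen-type decomposition underlying Proposition~\ref{prop:maxmindeg}: the $2$-regular graph $H\supseteq G'$ decomposes into cycles, and coloring a cycle of length $2\ell$ with the pattern $1,2,3,\dots$ or an alternating pattern makes each vertex see exactly two consecutive colors. If each degree-$2$ piece sees an interval $\{t,t+1\}$ and the pieces of $v$ are assigned disjoint (or minimally overlapping) intervals whose union is a single integer interval, then the total number of colors at $v$ is about $d_G(v)/2+1$, and each color repeats at most $\lceil\Delta/2\rceil$ times across the pieces --- but more importantly, because consecutive pieces share the \emph{boundary} color, each specific color is incident to at most two pieces of $v$, giving the bound $\lceil\Delta(G)/2\rceil$ once the arithmetic is tracked carefully. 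The main obstacle I anticipate is exactly this bookkeeping: guaranteeing that the intervals assigned to the distinct pieces of a single vertex $v$ concatenate into one contiguous interval while simultaneously keeping every individual color's multiplicity at $v$ bounded by $\lceil\Delta(G)/2\rceil$, and handling the odd-degree leftover pieces and the hypothesis $\Delta(G)\ge 6$ (which rules out small cases where $\lceil\Delta/2\rceil$ would be too small to absorb the boundary overlaps) without breaking the interval property.
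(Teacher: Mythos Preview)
Your splitting approach runs into a real obstacle that the ``tiling intervals'' refinement does not overcome. After splitting, $G'$ is a disjoint union of paths and cycles, and the pieces of a fixed vertex $v$ may lie in completely different components of $G'$ (or far apart on the same cycle). There is no mechanism in your construction that coordinates which two-element interval $\{t,t+1\}$ each piece of $v$ receives, so you cannot force the spectra of the $\lceil d_G(v)/2\rceil$ pieces to concatenate into a single interval at $v$. Concretely: if piece $p_1$ of $v$ sits on one cycle and piece $p_2$ on another, the colorings of those cycles are independent, and nothing prevents $p_1$ from seeing $\{1,2\}$ while $p_2$ sees $\{5,6\}$. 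The claim ``each specific color is incident to at most two pieces of $v$'' is not justified either; with independent cycles, arbitrarily many pieces of $v$ could all see $\{1,2\}$.

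The only way to salvage the split-and-recombine idea with just two colors is to guarantee that at every degree-$2$ piece the two edges receive \emph{distinct} colors; then each color appears at most once per piece and hence at most $\lceil d_G(v)/2\rceil$ times at $v$, while the spectrum at $v$ is automatically $\{1,2\}$. But that requires a proper $2$-edge-coloring of $G'$, which fails whenever $G'$ contains an odd cycle. The paper's proof is precisely the clean resolution of this parity issue: rather than split, it passes to an Eulerian supergraph $H$ (two copies of $G$ with odd-degree vertices joined), colors a single Eulerian circuit alternately $1,2$, and observes that every vertex except the start of the circuit automatically sees each color exactly $d_H(v)/2$ times. The one possibly unbalanced vertex is repaired by swapping colors along a short subtrail to a vertex of non-maximum degree, with a brief case analysis on the parity of $\Delta(G)$. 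The hypothesis $\Delta(G)\ge 6$ is not used to ``absorb boundary overlaps'' but simply because the bound $\lceil\Delta/2\rceil$ is already covered by Theorem~\ref{prop:maxdeg5} for $\Delta\le 5$.
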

		\begin{proof}
			Since any regular graph has impropriety at most $2$,
			it suffices to consider the case when $\delta(G) < \Delta(G)$.
			Furthermore, without loss of generality, we assume that
			$G$ is connected.
			
			Let $H$ be the graph obtained by taking two copies of $G$ and
			adding an edge between any two corresponding vertices of odd degree.
			We shall consider $G$ as a subgraph of $H$.
			
			Since all vertex degrees in $H$ are even, it has an Eulerian circuit
			$T$. By coloring all edges of $T$ by $1$ and $2$ alternately
			along $T$,
			we obtain an improper interval coloring $\varphi$ of $H$. %Moreover,
			Let $\alpha$ be the improper interval coloring of $G$
			induced by $\varphi$. If every vertex of $G$ is incident with
			at most $\lceil \frac{\Delta(G)}{2} \rceil$ edges with the same
			color, then the desired result follows, so assume that this does not
			hold. Then there is a vertex $v$ which is incident with exactly
			$\lceil \frac{\Delta(G)}{2} \rceil +1$ edges with the same color
			under $\alpha$,
			say $1$. 
			Indeed, since
			the edges of $T$ are colored
			alternately by colors $1$ and $2$, 
			$v$ must be the first vertex of the Eulerian circuit $T$ in $H$.
			 Without loss of generality, we assume that $v$ is a vertex
			of maximum degree in $G$.

			Let us first consider the case when $\Delta(G)$ is odd, that is,
			$\Delta(G) = \Delta(H)-1$.
			Let $T_1$ be a shortest subtrail of $T$ from
			$v$ to a vertex $u$ of degree at most $\Delta(G)-1$ in $G$.
			We define a new coloring $\varphi'$ of $H$ from $\varphi$
			by recoloring the edges on $T_1$ in the following way:
			we set 
					$$\varphi'(e) = 
			\begin{cases}
					1, & \text{ if $\varphi(e)=2$ and $e \in E(T_1)$}, \\
					2, & \text{ if $\varphi(e)=1$ and $e \in E(T_1)$}, \\
					\varphi(e), & \text{ if $e \notin E(T_1)$}.
			\end{cases}
			$$
			Since 
			 $d_G(u) \leq \Delta(G)-1$, 
			and all vertices of $H$ except $v$ and $u$ are incident
			with equally many edges of color $i$ under $\varphi'$ as under
			$\varphi$, $i=1,2$,
			it follows that the restriction of $\varphi'$
			to $G$ is a  $\lceil
			\frac{\Delta(G)}{2}\rceil$-improper interval coloring of $G$.
			
			Let us now consider the case when $\Delta(G)$ is even,
			i.e. $\Delta(H)=\Delta(G)$.
			Let $T_1$ be a shortest subtrail of $T$ from $v$ 
			to a vertex $x$ of degree at most $\Delta(G)-1$ in $G$, and 
			suppose $e_1$ is the last
			edge of $T_1$. 
			We consider some different cases.
			\begin{itemize}
			
				\item[(a)] If $e_1 \notin E(G)$ or $d_G(x) \leq \Delta(G)-2$,
				then we define a new coloring $\varphi'$
				from $\varphi$ by recoloring
				all edges of $T_1$ by setting $\varphi'(e) =1$ if $\varphi(e)=2$,
				$\varphi'(e)=2$ if $\varphi(e)=1$, and retaining the color
				of every other edge of $H$. The coloring $\alpha'$ of $G$
				induced by $\varphi'$ is a 
				$\lceil\frac{\Delta(G)}{2}\rceil$-improper interval coloring of $G$.
				
				\item[(b)] If $e_1 \in E(G)$, $d_G(x) = \Delta(G)-1$,
				$\varphi(e_1) = 1$ $(2)$, and $x$
				is incident with an edge in $E(H)\setminus E(G)$ 
				of color $2$ $(1)$ under $\varphi$, then we proceed as in (a).
				
				\item[(c)] If $e_1 \in E(G)$, $d_G(x) = \Delta(G)-1$,
				$\varphi(e_1) =1$ $(2)$, and $x$
				is incident with an edge $e_2 \neq e_1$ in $H$ of color $1$ $(2)$
				under $\varphi$
				that is not in $G$, then we proceed as follows:
				%let $e_2$ be the edge incident with $x$ that is not in $G$,
				%and 
				let $T_2$ be the subtrail of $T$ beginning with $v$ whose
				last edge is $e_2$. By proceeding as in (a) and switching colors
				on $T_2$, and taking the restriction of the obtained coloring
				to $G$, we obtain a 
				$\lceil\frac{\Delta(G)}{2}\rceil$-improper interval coloring of $G$.
				\end{itemize}
			
			\end{proof}
		
		\subsection{Outerplanar graphs}
		
		\bigskip
		
		In this section we consider outerplanar graphs.
		We do not know of any outerplanar graph $G$
		with $\muint(G) \geq 3$; in fact, 
		we believe that there is no such graph.
		
		\begin{conjecture}
		\label{conj:outerplanar}
			For any outerplanar graph $G$, $\muint(G) \leq 2$.
		\end{conjecture}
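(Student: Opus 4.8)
The plan is to reduce the conjecture to $2$-connected outerplanar graphs and then induct on the structure of such a graph by peeling off one ``ear'' at a time, exploiting the fact that a path of degree-$2$ vertices can be colored with almost complete freedom. For the reduction, the key point is that an interval coloring is invariant under a global shift of all its colors, which makes cut vertices easy to handle. Concretely, I would root the block--cut-vertex tree of $G$ and process the blocks from the root downward; assuming each block admits a $2$-improper interval coloring, I combine the blocks meeting at a cut vertex $v$ by shifting the palette of each so that the intervals they contribute at $v$ occupy disjoint consecutive ranges. Distinct blocks then use disjoint colors, so no color occurs more than twice at $v$; the union of the contributed intervals is again an interval; and a uniform shift of a block preserves its interval property at every other vertex, so the process descends consistently through the tree. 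Bridges are single edges and are trivial. Hence it suffices to prove $\muint(B)\le 2$ for every $2$-connected outerplanar graph $B$.

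For a $2$-connected outerplanar $B$ the outer boundary is a Hamiltonian cycle and the chords are non-crossing, so the weak dual restricted to the inner faces is a tree, and I would induct on the number of inner faces. If there is a single face then $B$ is a cycle and $\muint(B)\le 2$ by Proposition~\ref{prop:WheelReg}. Otherwise I pick a leaf of the weak-dual tree: it is a face bounded by one chord $uv$ together with a path $u,w_1,\dots,w_{k-1},v$ of outer edges ($k\ge 2$) whose internal vertices have degree $2$ in $B$. Deleting $w_1,\dots,w_{k-1}$ yields a smaller $2$-connected outerplanar graph $B'$ in which $uv$ is now a boundary edge, and I color $B'$ by induction.

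The extension step is where the real work lies: after coloring $B'$ I must reinsert the path and color its $k$ edges. Each internal $w_i$ has degree $2$, so consecutive path edges must receive colors differing by at most one, meaning the color sequence along the path is a walk with steps in $\{-1,0,+1\}$, which gives considerable latitude. The crucial observation is that the chord color $c_{uv}$ lies in the spectrum at both $u$ and $v$, so the two intervals $[\underline S(u,\alpha),\overline S(u,\alpha)]$ and $[\underline S(v,\alpha),\overline S(v,\alpha)]$ overlap in $c_{uv}$. When $c_{uv}$ occurs only once at each endpoint I would color every path edge $c_{uv}$: this keeps each internal vertex's spectrum a single color, raises the multiplicity of $c_{uv}$ at $u$ and $v$ to two, and preserves both intervals. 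When one endpoint already carries $c_{uv}$ twice I would instead start the path at a color adjacent to $c_{uv}$ that is still available there, and use the $\pm1$ steps of the path to rejoin $c_{uv}$ before reaching the other end.

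The main obstacle is guaranteeing that such an available color near $c_{uv}$ always exists at a high-degree endpoint. An adversarial coloring of $B'$ could give $c_{uv}-1$, $c_{uv}$, and $c_{uv}+1$ all multiplicity two at $u$, leaving no room to attach without violating either the interval condition or $2$-improperness. For bounded maximum degree the spectra are short and this congestion can be dodged by hand, which is exactly why the degree-restricted case is tractable; the difficulty in the full conjecture is to strengthen the induction hypothesis so as to forbid such congestion \emph{uniformly}. I would attempt to carry an invariant fixing, for the attaching chord of each block and ear, the position and multiplicity of its color within the spectra at both endpoints---for instance that $c_{uv}=\underline S(u,\alpha)=\underline S(v,\alpha)$ and occurs exactly once at each---so that a fresh ear can always be attached one color below. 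Making such an invariant survive the simultaneous demands of all the chords meeting at a single vertex is the crux of the problem, and is precisely the point at which one is forced back to the hypothesis $\Delta(G)\le 8$.
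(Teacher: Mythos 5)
You are attempting Conjecture \ref{conj:outerplanar}, which the paper itself leaves \emph{open}: the authors prove only the restricted case $\Delta(G)\le 8$ (Proposition \ref{prop:outerplanar}), and they do so by a route entirely different from yours --- coloring the Hamiltonian cycle of a $2$-connected outerplanar graph alternately with colors $2$ and $3$ (with a degree-$2$ vertex absorbing the parity defect of an odd cycle), then giving the chord graph, which has maximum degree at most $6$ and is Class 1 by Fiorini's theorem, a proper edge coloring whose colors $5$ and $6$ are remapped to $1$ and $4$. Your preliminary steps are sound and consistent with the paper: the shift-and-glue reduction at cut vertices (the paper invokes the same block reduction without proof), and the structure theory of $2$-connected outerplanar graphs --- outer Hamiltonian cycle, non-crossing chords, weak dual a tree, leaf ear consisting of one chord $uv$ plus a path of degree-$2$ vertices --- is all correct, as is the observation that along such a path consecutive edge colors may differ by at most one. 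The easy attachment case (chord color $c_{uv}$ of multiplicity one at both endpoints) is also handled correctly.

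The gap is exactly where you locate it, and it is fatal rather than merely technical: the strengthened invariant you propose --- that the attaching chord's color satisfies $c_{uv}=\underline S(u,\alpha)=\underline S(v,\alpha)$ and has multiplicity one at each endpoint --- is not just hard to maintain but provably unmaintainable once several chords share an endpoint. Consider a fan (a hub joined to a path), which is outerplanar with the hub $u$ incident to arbitrarily many chords: at most one chord at $u$ can occupy the bottom color $\underline S(u,\alpha)$ with multiplicity one, and even the relaxed scheme of attaching each ear ``one color below'' saturates after two ears, since a third edge of color $\underline S(u,\alpha)-1$ at $u$ violates $2$-improperness. So any invariant prescribing position and multiplicity of every attaching chord's color at a shared vertex collapses as soon as three inner faces meet there. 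Note also that your closing remark is inaccurate in one respect: the bounded-degree case is \emph{not} recovered by dodging congestion ``by hand'' within your ear induction --- your sketch does not carry that out --- whereas the paper's Proposition \ref{prop:outerplanar} obtains $\Delta(G)\le 8$ by the global cycle-plus-chords argument, which sidesteps the local congestion problem altogether. In short: correct reduction, correct structure, honest identification of the crux, but $\muint(G)\le 2$ for all outerplanar $G$ remains unproven by your argument, just as it remains a conjecture in the paper.
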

		
		Since there are examples of outerplanar graphs
		with no interval edge coloring,
		the upper bound in Conjecture \ref{conj:outerplanar} would be sharp
		if true.
		Next, we shall prove that this conjecture holds for
		graphs with maximum degree at most eight.
		
		\begin{proposition}
		\label{prop:outerplanar}
			If $G$ is an outerplanar graph and $\Delta(G) \leq 8$,
			then $\muint(G) \leq 2$.		
		\end{proposition}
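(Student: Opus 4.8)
The plan is to peel the problem down to $2$-connected blocks of bounded degree and then build the colouring locally. By Theorem \ref{prop:maxdeg5} every graph of maximum degree at most $5$ already has impropriety at most $2$, so I would assume $6\le\Delta(G)\le 8$. Next I would reduce to the $2$-connected case through a block-by-block shifting argument: colour each block separately, and if a cut vertex $v$ lies in blocks $B_1,\dots,B_m$, add a constant to all colours of each $B_i$ so that the intervals $S(v,\cdot)$ coming from the different blocks occupy consecutive, pairwise disjoint ranges of integers. Adding a constant preserves the interval property at every vertex of a block, the union of the blocks' intervals at $v$ is again an interval, and since the ranges are disjoint no colour is shared by two blocks at $v$; hence the glued colouring remains $2$-improper. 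Each block is outerplanar with maximum degree at most $8$, so it suffices to treat a single $2$-connected outerplanar graph.

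For a $2$-connected outerplanar graph $G$ I would exploit its canonical shape: the outer boundary is a Hamiltonian cycle and all other edges are pairwise non-crossing chords, so the inner faces form a tree. The goal can be phrased as partitioning $E(G)$ into at most $\lceil \Delta(G)/2\rceil\le 4$ subgraphs of maximum degree at most $2$ whose indices form an interval at every vertex; assigning colour $i$ to the $i$th part then gives a $2$-improper interval colouring that uses only four colours. I would construct such a partition by induction, repeatedly deleting a vertex of degree at most $2$ --- which a $2$-connected outerplanar graph always possesses --- colouring the smaller graph, and then reinserting the vertex. Reinserting a vertex of degree $1$ is harmless, since its single edge can take a colour just below the minimum or just above the maximum of its neighbour's interval, a value that currently occurs zero times there.

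The crux is the reinsertion of a degree-$2$ vertex $v$ with neighbours $a$ and $b$: its two edges must receive colours forming an interval of length one or two at $v$, while each chosen colour has to sit at the boundary of, or in a non-saturated position inside, the interval already present at $a$ (respectively $b$), keeping every colour-multiplicity at most $2$. Such a simultaneous extension need not exist for a fixed colouring of $G-v$ --- this failure of local extendability is precisely what makes interval colourability hard and leaves Conjecture \ref{conj:outerplanar} open --- so the induction must carry extra slack and occasionally recolour near $a$ and $b$. I expect to organise this as a finite case analysis on the local configuration (whether $ab$ is an edge, the relative positions of the interval endpoints at $a$ and $b$, and whether $a$ or $b$ has already reached degree $8$), showing that among the four available colours one can always be placed in the required position. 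The hypothesis $\Delta(G)\le 8$ is exactly what keeps the palette down to four colours and the number of local configurations finite; extending the same bookkeeping to unbounded degree is what a proof of the full conjecture would require, and is where I expect the real difficulty to lie.
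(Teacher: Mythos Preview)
Your reduction to $2$-connected blocks is fine (and in fact more carefully justified than the paper's one-line assertion), but the heart of your argument---the inductive reinsertion of a degree-$2$ vertex---is not actually carried out. You yourself note that a simultaneous extension at both neighbours ``need not exist for a fixed colouring of $G-v$'' and that the induction ``must carry extra slack and occasionally recolour near $a$ and $b$,'' yet no recolouring scheme, no invariant providing the slack, and no case analysis are supplied. Saying you ``expect to organise this as a finite case analysis'' is a hope, not a proof; precisely this kind of local-extension-with-repair is where interval colouring arguments typically break down, and nothing in your write-up shows that the $\Delta\le 8$ bound makes the analysis close. As it stands, this is a genuine gap.

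The paper's proof sidesteps the whole issue with a global, non-inductive construction. Take the Hamiltonian cycle $C$ of the $2$-connected outerplanar $G$ and colour it alternately with colours $2$ and $3$ (if $|V(C)|$ is odd, start and end with colour $2$ at a vertex of degree $2$, which always exists). Then $H=G-E(C)$ is outerplanar with $\Delta(H)\le 6$, hence Class~1 by Fiorini's theorem, so it has a proper $6$-edge-colouring; now fold colour $5$ onto $1$ and colour $6$ onto $4$. The resulting colouring of $G$ uses only colours $\{1,2,3,4\}$, each appearing at most twice at any vertex, and every vertex sees $\{2,3\}$ from the cycle, so its spectrum is automatically an interval contained in $[1,4]$. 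No induction, no reinsertion, no local repair---the Class~1 fact does all the work. Your plan to partition into four degree-$\le 2$ layers is the right target, but the paper reaches it in one stroke rather than vertex by vertex.
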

		
		For the proof of this result, we shall use the well-known
		fact that an outerplanar graph is Class 1 unless it is an
		odd cycle \cite{Fiorini}.
		
		\begin{proof}
			Since a graph $G$ has a $k$-improper interval coloring
			if every block of $G$ has a $k$-improper interval coloring,
			it suffices to consider the case when $G$ is $2$-connected.
		
			Consequently, assume that $G$ is a $2$-connected; then it
			has a Hamiltonian cycle $C$. The graph $G-E(C)$
			has maximum degree at most $6$.
			If $|V(C)|$ is even, then we define a  proper edge coloring $\varphi$
			of $C$ by coloring its edges alternately by colors $2$ and $3$.
			If $|V(C)|$ is odd, then we define $\varphi$ in the following way:
			it is well-known that every $2$-connected
			outerplanar has a vertex $v$ of degree $2$; we color the edges of $C$
			alternately by colors $2$ and $3$, and beginning and ending with
			color $2$ at the edges incident with $v$.
			
			Now, consider the graph $H=G-E(C)$. Since $H$ is 
			an outerplanar graph (or consisting of several outerplanar
			components),
			it has a proper edge coloring $\alpha$ using at most $6$ colors
			$1,\dots,6$.
			From $\alpha$, we define a new edge coloring $\alpha'$ by recoloring
			any edges of colors $5$ and $6$ by colors $1$ and $4$, respectively.
			It is straightforward to verify that the colorings
			$\varphi$ and $\alpha'$ taken together form a $2$-improper
			interval coloring of $G$.
		\end{proof}

		Using the same vertex splitting technique as several times before,
		we deduce the following corollary.  Note that if $G$ is outerplanar
		and $v \in V(G)$, then given integers $k$ and $l$ such that
		$k+l=d_G(v)$,
		it is always possible to split the vertex $v$
		into two new vertices $v'$ and $v''$ of degrees $k$ and $l$,
		respectively, so that the resulting graph
		is outerplanar (or a union of vertex-disjoint outerplanar graphs). 
		We state this observation as a lemma.
		
		\begin{lemma}
		\label{lem:split}
			If $G$ is outerplanar, $v$ is a vertex of $G$ and $k$ and $l$
			are positive integers satisfying $d_G(v) = k+l$, then
			we can split the vertex $v$ into two new vertices of degrees
			$k$ and $l$, respectively, in such a way that the resulting graph
			is outerplanar (or a union of vertex-disjoint outerplanar graphs).
		\end{lemma}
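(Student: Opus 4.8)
The plan is to argue directly from a plane drawing, using the standard fact that a graph is outerplanar precisely when it has a planar embedding in which every vertex lies on the boundary of the outer (unbounded) face. So I would fix such an embedding of $G$ and focus on the vertex $v$. Because $v$ lies on the boundary of the outer face, among the edges incident with $v$ — taken in the cyclic (rotational) order induced by the embedding — at least one of the ``gaps'' between two consecutive edges belongs to the outer face. I will call such a gap an \emph{outer wedge} at $v$; the whole construction will be carried out by splitting $v$ at an outer wedge, which is exactly what guarantees that outerplanarity is preserved.

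First I would relabel the edges incident with $v$ as $f_1, f_2, \dots, f_d$, where $d = d_G(v) = k+l$, listing them in the cyclic order around $v$ so that the chosen outer wedge lies between $f_d$ and $f_1$; in particular both $f_1$ and $f_d$ border the outer face at $v$. Then I split $v$ into two vertices $v'$ and $v''$, assigning the contiguous block $f_1, \dots, f_k$ to $v'$ and the contiguous block $f_{k+1}, \dots, f_d$ to $v''$, so that $d(v') = k$ and $d(v'') = l$ and no edge joins $v'$ to $v''$. Geometrically, I would place $v'$ and $v''$ as two distinct points in a small disc around the former position of $v$, both inside the outer wedge, with $v'$ on the $f_1$ side and $v''$ on the $f_d$ side, and reroute only the short initial segments of the edges near $v$; since each of the two blocks is a contiguous arc of the rotation, this can be done without introducing crossings.

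The key step is to check that the new drawing is again outerplanar, i.e.\ that both $v'$ and $v''$ lie on the outer face. This holds because $v'$ is incident with $f_1$ and $v''$ is incident with $f_d$, and both of these edges border the outer wedge; when $v$ is pulled apart into $v'$ and $v''$, that wedge opens into a region of the outer face touching both new vertices, while every other vertex of $G$ keeps its incidence with the outer face and no crossings are created. I expect the only point requiring care to be the case where $G$ is not $2$-connected, so that $v$ may be a cut vertex with several outer-face wedges, or splitting $v$ may disconnect $G$. This causes no genuine difficulty: an outer wedge exists as soon as $v$ is on the outer boundary, and performing the split at one such wedge produces an outerplanar graph — or, when the split separates a block, a vertex-disjoint union of outerplanar graphs, which is precisely what the statement permits.
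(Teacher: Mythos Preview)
Your argument is correct. The paper itself does not prove this lemma at all: it introduces the statement with the remark that the split ``is always possible'' and then simply records it as an observation, so there is no approach in the paper to compare against. What you have written supplies precisely the justification the paper omits --- fix an outerplanar embedding, locate an outer wedge at $v$, and split $v$ along a contiguous block of the rotation beginning at that wedge, so that the wedge opens into a region of the outer face incident with both $v'$ and $v''$.

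One small point of presentation: saying that $v'$ and $v''$ are placed ``both inside the outer wedge'' is slightly loose, since $v'$ must still see $p_k$ and $v''$ must still see $p_{k+1}$ without their connecting arcs crossing. The clean way to phrase it is to draw a curve in the small disc around $v$ from a point on the outer-wedge arc to a point on the arc between $f_k$ and $f_{k+1}$; this curve separates the disc into two pieces, one containing the endpoints of $f_1,\dots,f_k$ and the other containing those of $f_{k+1},\dots,f_d$, and you place $v'$ in the first and $v''$ in the second. Both pieces meet the outer-wedge arc, so both new vertices lie on the outer face. This is exactly what your picture is doing; it just makes the non-crossing claim immediate.
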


		\begin{corollary}
			If $G$ is an outerplanar graph, then
			$\muint(G) \leq \left\lceil
			\frac{\Delta(G)}{4} \right\rceil +1$.
		\end{corollary}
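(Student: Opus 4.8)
The plan is to combine Proposition~\ref{prop:outerplanar} with the vertex-splitting technique already established in Lemma~\ref{lem:split}. The strategy mirrors the several vertex-splitting arguments used earlier in the paper (for instance in the bipartite bounds): we reduce an arbitrary outerplanar graph $G$ to one of maximum degree at most $8$, apply the already-proven case, and then track how many copies each original vertex is split into.

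Concretely, starting from an outerplanar graph $G$, I would repeatedly apply Lemma~\ref{lem:split} to split every vertex $v$ of degree at least $9$ into vertices of degree exactly $8$, together with one remaining vertex of smaller degree. Since $d_G(v) \leq \Delta(G)$, each original vertex is divided into at most $\left\lceil \frac{\Delta(G)}{8} \right\rceil$ new vertices; call the resulting graph $G'$. By Lemma~\ref{lem:split}, $G'$ is again outerplanar (or a disjoint union of outerplanar graphs), and by construction $\Delta(G') \leq 8$. Hence Proposition~\ref{prop:outerplanar} applies to each component of $G'$, giving a $2$-improper interval coloring $\alpha$ of $G'$. This coloring of $G'$ induces a coloring of $G$: an edge of $G$ corresponds to exactly one edge of $G'$, so we assign it the same color. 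At any original vertex $v$, the incident edges are distributed among the at most $\left\lceil \frac{\Delta(G)}{8} \right\rceil$ split copies, and at each copy the color $2$-improperly forms an interval; combining the intervals from the copies still yields an integral interval at $v$ (since splitting preserves the interval property when the pieces are chosen consecutively), while the maximal number of like-colored edges at $v$ is at most $2\left\lceil \frac{\Delta(G)}{8} \right\rceil$.

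\textbf{The main obstacle} I anticipate is reconciling the clean target bound $\left\lceil \frac{\Delta(G)}{4}\right\rceil + 1$ with what the splitting argument naively produces, namely $2\left\lceil \frac{\Delta(G)}{8}\right\rceil$. A short case check shows $2\left\lceil \frac{\Delta(G)}{8}\right\rceil \le \left\lceil \frac{\Delta(G)}{4}\right\rceil + 1$ for all $\Delta(G)$, so the coarser splitting bound already suffices; but to get exactly the stated corollary I would instead split each vertex into copies of degree at most $8$ while keeping the number of copies to at most $\left\lceil \frac{\Delta(G)}{8}\right\rceil$, and then argue carefully that the intervals at the copies can be arranged to overlap or abut so that the total impropriety does not simply double. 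The delicate point is ensuring that the interval property is genuinely preserved under recombination of split vertices and that the worst-case count of repeated colors is controlled; this is exactly the bookkeeping that the paper defers to the reader as ``the same vertex splitting technique as several times before,'' and the only real content is verifying the arithmetic inequality between the two ceiling expressions.
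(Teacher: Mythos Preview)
Your approach has a genuine gap at exactly the point you flag as ``delicate'': when you split $v$ into several pieces and apply Proposition~\ref{prop:outerplanar} to the resulting graph $G'$ as a black box, there is no reason the spectra at the different pieces of $v$ should recombine into a single interval at $v$. Proposition~\ref{prop:outerplanar} only promises a $2$-improper interval coloring of $G'$; it gives no control over which colors land at which vertices. The pieces of $v$ may well lie in different blocks of $G'$, and the block-by-block construction (which may shift colorings of blocks by arbitrary constants) could give one piece spectrum $[1,4]$ and another $[7,10]$, whose union is not an interval. The earlier splitting arguments in the paper work only because after splitting into pieces of degree exactly $\delta(G)$ one applies a \emph{proper} $\delta(G)$-edge coloring, so every full-degree piece receives the entire color set $[1,\delta(G)]$; Proposition~\ref{prop:outerplanar} offers no such guarantee.

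The paper's proof avoids this by not splitting $G$ at all. It keeps the Hamiltonian cycle $C$ of the $2$-connected $G$ intact, colored with the ``middle'' colors $\{2,3\}$ exactly as in the proof of Proposition~\ref{prop:outerplanar}. Only $G-E(C)$ is split, and into pieces of degree at most $4$; one then uses a \emph{proper} $4$-edge coloring of the resulting outerplanar graph (outerplanar graphs are Class~1), not a $2$-improper one. Because every vertex already carries the anchor $\{2,3\}$ from $C$ and all remaining colors lie in $\{1,2,3,4\}$, the spectrum at each vertex is automatically a subinterval of $[1,4]$ containing $\{2,3\}$. The impropriety at $v$ is then at most $1$ (from $C$) plus the number of degree-$4$ split pieces of $v$ in $G-E(C)$, which is at most $\lceil(\Delta(G)-2)/4\rceil\le\lceil\Delta(G)/4\rceil$.
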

		\begin{proof}
			By Proposition \ref{prop:outerplanar}, we may assume that
			$\Delta(G) \geq 9$.
			As in the preceding proof, it suffices to consider the case when
			$G$ is $2$-connected. Let $C$ be a Hamiltonian cycle of $G$;
			we color $C$ as in the proof of Proposition \ref{prop:outerplanar}.
			The result now follows by splitting all vertices of $G-E(C)$
			into as many vertices of degree
			$4$ as possible, and possibly one additional vertex of degree at most
			$3$; 
			 by repeatedly applying Lemma \ref{lem:split}, this can be done
			so that the resulting graph $J$ is outerplanar (or a union
			of disjoint outerplanar graphs).

			Now, since $\Delta(J) =4$, $J$ has a proper $4$-edge coloring. 
			This proper edge coloring, together with the coloring of $C$
			is the required improper interval edge coloring of $G$.
		\end{proof}
		\bigskip

\subsection{Complete multipartite graphs}

In this section we prove an upper bound for the impropriety
of complete multipartite graphs.
		A graph $G$ is called {\em complete $r$-partite} ($r\geq 2$) if its vertices can be partitioned into $r$ nonempty independent sets $V_1,\ldots,V_r$ such that each vertex in $V_i$ is adjacent to all the other vertices in $V_j$ for $1\leq i<j\leq r$. Let $K_{n_{1},n_{2},\ldots,n_{r}}$ denote a complete $r$-partite graph
        with independent sets $V_1,V_2,\ldots,V_r$ of sizes $n_{1},n_{2},\ldots,n_{r}$.

\begin{theorem}
\label{completemulti} For any $n_{1},n_{2},\ldots,n_{r}\in
\mathbb{N}$, $$\muint\left(K_{n_{1},n_{2},\ldots,n_{r}}\right)\leq \left\lceil\frac{r}{2}\right\rceil.$$
\end{theorem}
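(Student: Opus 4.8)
The plan is to build the coloring in two stages: a coarse, ``part-level'' interval structure that is then blown up. Since every edge of $K_{n_{1},\ldots,n_{r}}$ joins two distinct parts, the edge set decomposes as a disjoint union of complete bipartite graphs $K_{n_{i},n_{j}}$, one for each pair $\{V_{i},V_{j}\}$, and each such piece admits the standard diagonal interval coloring: the edge between the $p$-th vertex of $V_{i}$ and the $q$-th vertex of $V_{j}$ receives color $p+q-1$, so that every vertex gets a contiguous block of colors. I would organize these bipartite pieces by means of a proper edge coloring of the complete graph $K_{r}$ on the parts, treating each part as a super-vertex. When $r$ is even this is a $1$-factorization into $r-1$ perfect matchings $M_{1},\ldots,M_{r-1}$; when $r$ is odd it is a near-$1$-factorization into $r$ matchings, each missing one part. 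In either case, the edges lying over a single matching $M_{\ell}$ form a vertex-disjoint union of complete bipartite graphs, because within $M_{\ell}$ each part is matched to at most one other part; hence each matching yields a \emph{proper} layer that I can interval-color by the diagonal scheme, giving every vertex exactly one contiguous block of colors, with each color used at most once in that layer.

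The remaining task is to stack the $s$ layers ($s=r-1$ or $s=r$) so that, at every vertex, the per-layer blocks concatenate into a single interval, while controlling how often a color is reused. The multiplicity of a color at a vertex equals the number of layer-blocks covering it there, so it suffices to assign the layers color bands whose overlaps are shallow: if the bands are shifted so that no color lies in more than $\lceil r/2\rceil$ of the layer-blocks meeting at any vertex, then the resulting coloring is automatically $\lceil r/2\rceil$-improper. The count $\lceil r/2\rceil$ is exactly the overlap depth one should aim for, since it is roughly half the number $s$ of matchings: each vertex of $V_{i}$ has degree $N-n_{i}$ (with $N=\sum_{j}n_{j}$), which must be packed into an interval of length at least $(N-n_{i})/\lceil r/2\rceil$, and the matchings can be paired so that consecutive layers, rather than all layers, compete for each color. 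For odd $r$ the same scheme runs with the $r$ near-perfect matchings, the only extra bookkeeping being the one part left unmatched in each matching; this does not affect the target overlap depth.

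The main obstacle is precisely this simultaneous alignment. A color is shared by the two endpoints of each edge, so the position of a layer's block inside its band is constrained \emph{at both ends of every edge at once}; forcing the blocks of successive layers to meet with no gap at one vertex may force a color to be reused at the other endpoint, or at a vertex in a different component of some later layer. The heart of the argument is therefore to choose the band widths and offsets (equivalently, to choose an ordering of the matchings and a shift for each) so that (i) the union of blocks at each vertex is genuinely an integral interval and (ii) the overlap depth never exceeds $\lceil r/2\rceil$. Verifying (i) at the boundary vertices, where adjacent layers overlap and where the varying part sizes $n_{j}$ make the block lengths unequal, is the step I expect to require the most care; the factor-pairing that guarantees (ii) and the diagonal coloring that guarantees the per-layer interval property are the comparatively routine ingredients.
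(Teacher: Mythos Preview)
Your framework---diagonally color each bipartite piece $K_{n_i,n_j}$ and then choose a shift $c_{ij}$ for each piece so that the $r-1$ blocks at every vertex form an interval of depth at most $\lceil r/2\rceil$---is sound, and the paper's construction is in fact an instance of it. But what you have written is not yet a proof: the entire content lies in exhibiting shifts that work, and you have not done so. The $1$-factorization of $K_r$ is a red herring, since components of a single matching are vertex-disjoint and may already be shifted independently; the layer structure therefore gives no extra leverage on the system of constraints. Saying ``matchings can be paired so that consecutive layers, rather than all layers, compete for each color'' does not specify any choice of the $c_{ij}$, and with unequal part sizes the $r$ interval conditions (each involving $r-1$ blocks of different lengths, every shift shared with another part) are genuinely coupled. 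You have identified the obstacle but not overcome it.

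The paper bypasses the search by writing the shifts down explicitly, and not via a $1$-factorization. After ordering $n_1\ge\cdots\ge n_r$, it splits the parts into $X=V_1\cup\cdots\cup V_{\lceil r/2\rceil}$ and $Y=V_{\lceil r/2\rceil+1}\cup\cdots\cup V_r$, and concatenates each group into a single global index. Inside $X$, the edge from the $j$-th vertex of $X$ (global index) to the $k$-th vertex of $V_{i+1}$ (local index) gets color $j+k-1$; in your language this is the diagonal with shift $c_{ab}=n_1+\cdots+n_{a-1}$ for $a<b\le\lceil r/2\rceil$, and a short calculation shows every spectrum inside $X$ is then an explicit interval with each color at most $\lceil r/2\rceil-1$ times (there are only $\lceil r/2\rceil-1$ other parts in $X$). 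The same is done inside $Y$. The $X$--$Y$ edges form one complete bipartite graph and are colored by the ordinary diagonal on global indices, contributing one further contiguous block and at most one further repetition per color, for a total of $\lceil r/2\rceil$. The key idea is this nested choice of within-group shifts, depending only on the smaller part index; no matching decomposition is used.
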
 

\begin{proof} Without loss of generality, we may assume that $n_{1}\geq n_{2}\geq \cdots\geq n_{r}$. We partition the independent sets 
$V_1,\dots, V_r$
into two groups: $X_{1},X_{2},\ldots,X_{\left\lceil\frac{r}{2}\right\rceil}$ (first $\left\lceil\frac{r}{2}\right\rceil$ independent sets) of sizes $n_{1},n_{2},\ldots,n_{\left\lceil\frac{r}{2}\right\rceil}$ and 
$Y_{1},Y_{2},\ldots,Y_{\left\lfloor\frac{r}{2}\right\rfloor}$
of sizes $n_{\left\lceil\frac{r}{2}\right\rceil+1},n_{\left\lceil\frac{r}{2}\right\rceil+2},\ldots,n_{r}$ 
(remaining $\left\lfloor\frac{r}{2}\right\rfloor$ independent sets). Let
$X=X_{1}\cup X_{2}\cup\cdots \cup X_{\left\lceil\frac{r}{2}\right\rceil}$ and 
$Y=Y_{1}\cup Y_{2}\cup\cdots \cup Y_{\left\lfloor\frac{r}{2}\right\rfloor}$. We also label 
the vertices of $X$ and $Y$ as follows: $$X=\left\{x_{1},x_{2},\ldots,x_{n_{1}},x_{n_{1}+1},\ldots,x_{n_{1}+n_{2}},\ldots,x_{n_{1}+n_{2}+\cdots+n_{\left\lceil\frac{r}{2}\right\rceil}}\right\}$$ 
and 
$$Y=\left\{y_{1},y_{2},\ldots,y_{n_{\left\lceil\frac{r}{2}\right\rceil+1}},y_{n_{\left\lceil\frac{r}{2}\right\rceil+1}+1},\ldots,y_{n_{\left\lceil\frac{r}{2}\right\rceil+1}+n_{\left\lceil\frac{r}{2}\right\rceil+2}},\ldots,y_{n_{\left\lceil\frac{r}{2}\right\rceil+1}+n_{\left\lceil\frac{r}{2}\right\rceil+2}+\cdots+n_{r}}\right\}.$$ 

Let $s_{i}=\sum_{j=1}^{i}n_{j}$ ($1\leq i\leq \left\lceil\frac{r}{2}\right\rceil$) and $t_{i}=\sum_{j=\lceil\frac{r}{2}\rceil+1}^{\lceil\frac{r}{2}\rceil+i}n_{j}$ ($1\leq i\leq \left\lfloor\frac{r}{2}\right\rfloor$).

Now let us consider the subgraphs $H$ and $H'$ of $K_{n_{1},n_{2},\ldots,n_{r}}$ induced by the vertices of $X$ and $Y$, respectively. We first define an edge coloring $\alpha$ of $H \cup H'$.

%the edges of  $H$ and $H'$ of $K_{n_{1},n_{2},\ldots,n_{r}}$.

\begin{description}
\item[($1$)] For $1\leq i\leq \lceil\frac{r}{2}\rceil-1$, $1\leq j\leq s_{i}$ and $1\leq k\leq n_{i+1}$, let
$$\alpha \left(x_{j}x_{s_{i}+k}\right)=j+k-1.$$

\item[($2$)] For $1\leq i\leq \lfloor\frac{r}{2}\rfloor-1$, $1\leq j\leq t_{i}$ and $1\leq k\leq n_{\lceil\frac{r}{2}\rceil+1+i}$, let
$$\alpha \left(y_{j}y_{t_{i}+k}\right)=j+k-1.$$
\end{description}

By the definition of $\alpha$, we have 

\begin{description}
\item[(a)] for $1\leq k\leq n_{1}$, 
$$S_{H}(x_{k},\alpha)=[k,k+n_{2}-1],$$

\item[(b)] for $1\leq i\leq \lceil\frac{r}{2}\rceil-2$ and $1\leq k\leq n_{i+1}$,
$$S_{H}(x_{s_{i}+k},\alpha)=[k,s_{i}+k-1]\cup [s_{i}+k,s_{i+1}+k-1]=[k,s_{i+1}+k-1],$$

\item[(c)] for $1\leq k\leq n_{\lceil\frac{r}{2}\rceil}$, 
$$S_{H}\left(x_{s_{\lceil\frac{r}{2}\rceil-1}+k},\alpha\right)=\left[k,s_{\lceil\frac{r}{2}\rceil-1}+k-1\right],$$

\item[(d)] for $1\leq k\leq n_{\lceil\frac{r}{2}\rceil+1}$, 
$$S_{H'}(y_{k},\alpha)=\left[k,k+n_{\lceil\frac{r}{2}\rceil+2}-1\right],$$

\item[(e)] for $1\leq i\leq \lfloor\frac{r}{2}\rfloor-2$ and $1\leq k\leq n_{\lceil\frac{r}{2}\rceil+1+i}$,
$$S_{H'}(y_{t_{i}+k},\alpha)=[k,t_{i}+k-1]\cup [t_{i}+k,t_{i+1}+k-1]=[k,t_{i+1}+k-1],$$

\item[(f)] for $1\leq k\leq n_{r}$, 
$$S_{H'}\left(y_{t_{\lfloor\frac{r}{2}\rfloor-1}+k},\alpha\right)=\left[k,t_{\lfloor\frac{r}{2}\rfloor-1}+k-1\right].$$
\end{description}

Note that for every vertex $v$ of $H \cup H'$, each color
can occur at most $\lceil\frac{r}{2}\rceil-1$ times at $v$ 
under the coloring $\alpha$.
Hence, $\alpha$ is an $(\lceil\frac{r}{2}\rceil-1)$-improper interval 
coloring of $H \cup H'$.  

Next, we define an edge coloring $\beta$ of $K_{n_{1},n_{2},\ldots,n_{r}}-E(H\cup H')$ as follows: for $1\leq i\leq s_{\lceil\frac{r}{2}\rceil}$ and $1\leq j\leq t_{\lfloor\frac{r}{2}\rfloor}$, let
$$\beta \left(x_{i}y_{j}\right)=i+j-1.$$

Now we are able to define an edge coloring $\gamma$ of $K_{n_{1},n_{2},\ldots,n_{r}}$ %as follows: 
by taking the colorings $\alpha$ and $\beta$ together; that is,
for any $e\in E(K_{n_{1},n_{2},\ldots,n_{r}})$, we set

$$\gamma(e) =
			\begin{cases}
				\alpha(e), \text{ if $e\in E(H\cup H')$}, \\
				\beta(e), \text{ otherwise.}
			\end{cases}
$$

By the definition of $\gamma$, we have 

\begin{description}
\item[(a')] for  $1\leq k\leq n_{1}$, 
$$S(x_{k},\gamma)=[k,k+n_{2}-1]\cup \left[k,k+t_{\lfloor\frac{r}{2}\rfloor}-1\right],$$

\item[(b')] for $1\leq i\leq \lceil\frac{r}{2}\rceil-2$ and $1\leq k\leq n_{i+1}$,
$$S(x_{s_{i}+k},\gamma)=[k,s_{i+1}+k-1]\cup \left[s_{i}+k,s_{i}+k+t_{\lfloor\frac{r}{2}\rfloor}-1\right],$$

\item[(c')] for $1\leq k\leq n_{\lceil\frac{r}{2}\rceil}$, 
$$S\left(x_{s_{\lceil\frac{r}{2}\rceil-1}+k},\gamma\right)=[k,s_{\lceil\frac{r}{2}\rceil-1}+k-1]\cup \left[s_{\lceil\frac{r}{2}\rceil-1}+k,s_{\lceil\frac{r}{2}\rceil-1}+k+t_{\lfloor\frac{r}{2}\rfloor}-1\right],$$

\item[(d')] for $1\leq k\leq n_{\lceil\frac{r}{2}\rceil+1}$, 
$$S(y_{k},\gamma)=\left[k,k+n_{\lceil\frac{r}{2}\rceil+2}-1\right]\cup \left[k,k+s_{\lceil\frac{r}{2}\rceil}-1\right],$$

\item[(e')] for $1\leq i\leq \lfloor\frac{r}{2}\rfloor-2$ and $1\leq k\leq n_{\lceil\frac{r}{2}\rceil+1+i}$,
$$S(y_{t_{i}+k},\gamma)=[k,t_{i+1}+k-1]\cup \left[t_{i}+k,t_{i}+k+s_{\lceil\frac{r}{2}\rceil}-1\right],$$

\item[(f')] for $1\leq k\leq n_{r}$, 
$$S\left(y_{t_{\lfloor\frac{r}{2}\rfloor-1}+k},\gamma\right)=\left[k,t_{\lfloor\frac{r}{2}\rfloor-1}+k-1\right]\cup \left[t_{\lfloor\frac{r}{2}\rfloor-1}+k,t_{\lfloor\frac{r}{2}\rfloor-1}+k+s_{\lceil\frac{r}{2}\rceil}-1\right].$$
\end{description}

It is not difficult to see that $\gamma$ is an $\lceil\frac{r}{2}\rceil$-improper interval coloring of $K_{n_{1},n_{2},\ldots,n_{r}}$; thus $\muint\left(K_{n_{1},n_{2},\ldots,n_{r}}\right)\leq \left\lceil\frac{r}{2}\right\rceil$.
\end{proof}

\begin{corollary}
			If $G$ is a complete $3$-partite or $4$-partite graph, then
			$\muint(G) \leq 2$.
\end{corollary}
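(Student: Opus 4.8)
The plan is to obtain this corollary as an immediate specialization of Theorem~\ref{completemulti}, so the entire argument amounts to substituting the relevant values of $r$ and evaluating a ceiling. First I would observe that a complete $3$-partite graph is, by definition, of the form $K_{n_1,n_2,n_3}$ for some $n_1,n_2,n_3 \in \mathbb{N}$, and a complete $4$-partite graph is of the form $K_{n_1,n_2,n_3,n_4}$ for some $n_1,n_2,n_3,n_4 \in \mathbb{N}$. In either case the graph in question is a complete $r$-partite graph with $r \in \{3,4\}$, and since $r \geq 2$ in both instances, the hypothesis of Theorem~\ref{completemulti} is met and the bound $\muint(G) \leq \lceil \frac{r}{2} \rceil$ applies verbatim.

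The remaining step is the elementary ceiling computation. For $r=3$ we have $\lceil \frac{3}{2} \rceil = 2$, and for $r=4$ we have $\lceil \frac{4}{2} \rceil = 2$. Hence in both cases Theorem~\ref{completemulti} yields $\muint(G) \leq 2$, which is exactly the claim of the corollary. I would write this out as a one-line appeal: ``This is immediate from Theorem~\ref{completemulti}, since $\lceil \frac{3}{2} \rceil = \lceil \frac{4}{2} \rceil = 2$.''

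Since the corollary asserts only the upper bound and nothing about tightness, no further construction is needed, and I do not expect any genuine obstacle: the single point worth checking is that the two small values $r=3$ and $r=4$ satisfy the standing assumption $r \geq 2$ under which Theorem~\ref{completemulti} was proved, which they plainly do. (If one wished, one could separately note that the bound $2$ is best possible here, since, for example, an odd cycle is a complete multipartite graph with $\muint = 2$ by Proposition~\ref{prop:WheelReg}; but this is not required for the corollary as stated.)
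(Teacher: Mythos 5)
Your proof is correct and takes the same route the paper intends: the corollary appears immediately after Theorem~\ref{completemulti} with no separate proof, being exactly the specialization $r\in\{3,4\}$ with $\left\lceil \frac{3}{2}\right\rceil=\left\lceil \frac{4}{2}\right\rceil=2$. One tiny caveat on your optional parenthetical only: among odd cycles, only $C_3=K_{1,1,1}$ is complete multipartite, so a tightness witness should be $C_3$ (or any Class~2 complete multipartite graph, as the paper notes after Conjecture~\ref{conj:partite}) rather than odd cycles in general --- but since you flagged that remark as unnecessary, it does not affect the correctness of your proof.
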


%We strongly 
In fact, we
believe that a more general result is true:

\begin{conjecture}
\label{conj:partite}
			If $G$ is a complete $r$-partite graph, then
			$\muint(G) \leq 2$.
\end{conjecture}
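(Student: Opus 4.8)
The plan is to prove the conjecture by a divide-and-conquer construction that recursively reduces a complete $r$-partite graph to two complete multipartite graphs on roughly $r/2$ parts each. Assume $n_1 \ge \cdots \ge n_r$ and split the parts into two groups $A$ and $B$ as in the proof of Theorem \ref{completemulti}, writing $G_A$ and $G_B$ for the complete multipartite graphs induced on $A$ and on $B$. Between $A$ and $B$ the graph is complete bipartite, which has an interval coloring (impropriety $1$) via the coloring $\beta(x_i y_j) = i + j - 1$ used in Theorem \ref{completemulti}. The key bookkeeping point is to use three \emph{disjoint} palettes for the edges inside $G_A$, inside $G_B$, and between $A$ and $B$; then the multiplicity of any color at any vertex is whatever it was in the single piece that uses that color, so the impropriety of the whole graph is the \emph{maximum}, not the sum, of the improprieties of the three pieces. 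Since the between-piece contributes impropriety $1$, it then suffices to color $G_A$ and $G_B$ with impropriety at most $2$, which is the same statement for fewer parts. The recursion bottoms out at $r \le 4$, which is already known, the nearly-balanced instances $n_1 - n_r \le 1$ are handled directly by Proposition \ref{prop:maxmindeg}, and the balanced instances are regular and covered by Proposition \ref{prop:WheelReg}.

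For this to work, the three sub-spectra at each vertex must concatenate into a single interval, and this is where the argument must be strengthened. I would place the palettes consecutively, with the inside-$G_A$ colors immediately below the between-colors for the vertices of $A$, and symmetrically for $B$. The induction hypothesis then has to be upgraded to a \emph{boundary-aligned} form, asserting not merely that $G_A$ has a $2$-improper interval coloring, but that it has one in which the spectrum of the $p$-th vertex of $A$ is an interval whose top endpoint is a prescribed affine function of $p$, so that the between-edges of that vertex, which form the interval $[p, p + |B| - 1]$ under $\beta$, attach to it with neither a gap nor an overlap. Verifying that the recursion can be carried out while maintaining this endpoint control, at vertices of all parts simultaneously, is the technical heart of the proof.

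The parity of $r$ and of the part sizes requires care. When $r$ is odd the split is uneven and one group is a complete multipartite graph on an odd number of parts; here I would either invoke the strengthened hypothesis directly or, for the regular pieces that arise along the way, repair odd degrees using the two-copies construction together with Petersen's theorem exactly as in Proposition \ref{prop:WheelReg}. An alternative route worth pursuing in parallel is to blow up a (near-)$1$-factorization of $K_r$: each factor blows up to a vertex-disjoint union of complete bipartite graphs, each interval-colorable, and one then tries to glue the per-factor interval colorings, using the slack afforded by letting each color appear twice to close the gaps caused by the differing partner-part sizes $n_{j}$.

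The step I expect to be the main obstacle is precisely this simultaneous interval (concatenation) constraint. Unlike multiplicity, which behaves well under disjoint palettes, the interval property couples the pieces: the between-coloring must attach to the within-spectrum at \emph{every} vertex, yet the next free color differs from vertex to vertex within a group, while the between-edges incident to a single vertex of the other group must themselves form an interval. Reconciling these two one-sided interval demands with only the slack of multiplicity $2$ is the crux, and it does not follow from the $\left\lceil\frac{r}{2}\right\rceil$ construction, which sidesteps the issue by spending extra multiplicity. I would first test the boundary-aligned hypothesis on the complete split graphs $K_{n,1,\dots,1}$, which appear to be the extremal case, before attempting the general gluing.
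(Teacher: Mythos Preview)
The statement you are attempting to prove is Conjecture~\ref{conj:partite}; it is stated in the paper as an \emph{open conjecture}, not a theorem, and the paper gives no proof of it. The surrounding text only establishes the weaker bound $\muint(K_{n_1,\dots,n_r})\le\lceil r/2\rceil$ (Theorem~\ref{completemulti}) and the case $r\le 4$ as a corollary. So there is no ``paper's own proof'' to compare your proposal against.

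As for the proposal itself, you are candid that it is a strategy rather than a proof, and you have correctly located the obstruction. The multiplicity bookkeeping is fine: if at each vertex the three edge types use pairwise disjoint colour sets, the impropriety is the maximum of the three. The problem, as you say, is the interval property. Your ``boundary-aligned'' strengthening asks for a $2$-improper interval colouring of $G_A$ in which the top endpoint of the spectrum at the $p$-th vertex of $A$ is a prescribed value (so that the between-interval $[p,p+|B|-1]$ under $\beta$ attaches without a gap), and symmetrically for $G_B$. This is a genuinely stronger inductive hypothesis than the conjecture itself, and you have not shown that it is maintained by the recursion; in particular, the base cases $r\le 4$ supplied by the paper (via Proposition~\ref{prop:maxmindeg} and Proposition~\ref{prop:WheelReg}) give \emph{some} $2$-improper interval colouring, not one with prescribed endpoint behaviour at every vertex, so the induction does not start. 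The alternative $1$-factorisation-of-$K_r$ route has the same difficulty: gluing $\lfloor r/2\rfloor$ per-factor interval colourings so that the union at every vertex is again an interval is exactly the step that the $\lceil r/2\rceil$ construction avoids by spending extra multiplicity.

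In short: this is an open problem in the paper, your plan identifies a plausible line of attack, but the decisive step (endpoint-controlled induction) is neither carried out nor shown to be feasible, so the proposal is not a proof.
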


Since there are complete multipartite graphs of Class 2,
Conjecture \ref{conj:partite}, if true, would be best possible.

	\subsection{Cartesian products of graphs}

		In this section we consider the impropriety of Cartesian products of graphs. 
				The Cartesian product $G\square
H$ of two graphs $G$ and $H$ is defined by setting
$$V(G\square H)=V(G)\times V(H), \text{and}$$
$$E(G\square H)=\{(u_{1},v_{1})(u_{2},v_{2})\colon\,
(u_{1}=u_{2}\wedge v_{1}v_{2}\in E(H))\vee (v_{1}=v_{2}\wedge
u_{1}u_{2}\in E(G))\}.$$

\begin{proposition}
\label{cartesianproduct} For any graphs $G$ and $H$, $$\muint(G\square H)\leq \max\left\{\muint(G),\muint(H)\right\}.$$
\end{proposition}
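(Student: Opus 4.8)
The plan is to combine the colorings of the factors along the natural product structure. Let $k = \max\{\muint(G), \muint(H)\}$, and fix a $\muint(G)$-improper interval edge coloring $\alpha$ of $G$ and a $\muint(H)$-improper interval edge coloring $\beta$ of $H$; both are then $k$-improper interval colorings of their respective graphs. The idea is that $G \square H$ decomposes edge-disjointly into copies of $G$ (one for each vertex of $H$) and copies of $H$ (one for each vertex of $G$): an edge $(u_1,v_1)(u_2,v_2)$ is either a ``$G$-edge'' (when $v_1=v_2$ and $u_1u_2\in E(G)$) or an ``$H$-edge'' (when $u_1=u_2$ and $v_1v_2\in E(H)$). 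The difficulty is that at a vertex $(u,v)$ the incident $G$-edges form a copy of the spectrum $S_G(u,\alpha)$ and the incident $H$-edges form a copy of $S_H(v,\beta)$, and we must shift these two intervals so that together they form a single interval while controlling impropriety.

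First I would define the coloring $\gamma$ on $G \square H$ as follows. For a $G$-edge $(u_1,v)(u_2,v)$ set $\gamma$ equal to $\alpha(u_1u_2)$, and for an $H$-edge $(u,v_1)(u,v_2)$ set $\gamma$ equal to $\alpha$-dependent shift plus $\beta(v_1v_2)$. Concretely, I would place the $G$-edge colors at each vertex $(u,v)$ in the block $[\underline{S}(u,\alpha), \overline{S}(u,\alpha)]$ and place the $H$-edge colors in the adjacent block by defining the color of an $H$-edge $(u,v_1)(u,v_2)$ to be $\overline{S}(u,\alpha) + \beta(v_1v_2) - \underline{S}(v_1 v_2 \text{ endpoint})$; the key requirement is that the shift depends only on $u$ (so that the $H$-edges incident to $(u,v)$ receive a consistent translate of $S_H(v,\beta)$, preserving that they form an interval) while lining up the bottom of that translated interval with $\overline{S}(u,\alpha)+1$.

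The verification then splits into two parts. For the interval condition at a vertex $(u,v)$: the $G$-edges contribute exactly the interval $S_G(u,\alpha)=[\underline{S}(u,\alpha),\overline{S}(u,\alpha)]$, and the $H$-edges contribute a translate of $S_H(v,\beta)$; with the shift chosen so that the translate starts at $\overline{S}(u,\alpha)+1$, the union is again a single interval. For the impropriety condition: a color $c$ appearing at $(u,v)$ comes either entirely from $G$-edges or entirely from $H$-edges, since the two blocks occupy disjoint ranges of colors at that vertex. In the first case the number of edges of color $c$ is at most the impropriety of $\alpha$ at $u$, hence at most $k$; in the second case at most the impropriety of $\beta$ at $v$, hence at most $k$. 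Thus $\gamma$ is a $k$-improper interval coloring.

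The main obstacle, and the step that requires care, is the choice of the shift so that it depends only on the $G$-coordinate $u$ and yet makes the two blocks at every vertex $(u,v)$ abut without gap or overlap; a naive per-edge shift could destroy either the interval property of the $H$-block or the disjointness of the two color ranges. Once the shift is correctly defined as a function of $u$ alone (namely by the quantity $\overline{S}(u,\alpha) - \underline{S}(v,\beta) + 1$ applied uniformly to all $H$-edges at copies indexed by $u$), both the interval and impropriety checks become routine, yielding $\muint(G \square H) \le k = \max\{\muint(G),\muint(H)\}$.
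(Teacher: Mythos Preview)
Your overall strategy is the right one and matches the paper's, but the specific coloring you propose is not well-defined, and this is precisely the ``main obstacle'' you flag but do not actually resolve. You color a $G$-edge $(u_1,v)(u_2,v)$ by $\alpha(u_1u_2)$ with \emph{no} shift, and then want the $H$-block at $(u,v)$ to begin at $\overline{S}(u,\alpha)+1$. That forces the $H$-edge shift to be $\overline{S}(u,\alpha)-\underline{S}(v,\beta)+1$, which you yourself write down---but this quantity depends on $v$, contradicting your own requirement that the shift ``depends only on $u$''. Concretely, an $H$-edge $(u,v_1)(u,v_2)$ has two endpoints with potentially different values of $\underline{S}(v_i,\beta)$, so the edge would receive two different colors depending on which endpoint you compute from. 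If instead you insist on a shift depending only on $u$ (say $\overline{S}(u,\alpha)$), then the $H$-block at $(u,v)$ is $[\overline{S}(u,\alpha)+\underline{S}(v,\beta),\,\overline{S}(u,\alpha)+\overline{S}(v,\beta)]$, which abuts the $G$-block only when $\underline{S}(v,\beta)=1$; in general there is a gap.

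The fix, which the paper uses, is to shift \emph{both} families: color a $G$-edge $(u_1,v)(u_2,v)$ by $\alpha(u_1u_2)+\underline{S}(v,\beta)-1$ (a shift depending only on $v$, hence constant along the edge) and an $H$-edge $(u,v_1)(u,v_2)$ by $\beta(v_1v_2)+\overline{S}(u,\alpha)$ (a shift depending only on $u$). Then at $(u,v)$ the $G$-block is $[\underline{S}(u,\alpha)+\underline{S}(v,\beta)-1,\,\overline{S}(u,\alpha)+\underline{S}(v,\beta)-1]$ and the $H$-block is $[\overline{S}(u,\alpha)+\underline{S}(v,\beta),\,\overline{S}(u,\alpha)+\overline{S}(v,\beta)]$; these abut for every $(u,v)$, and your impropriety argument (the two blocks are disjoint, so each color is used at most $k$ times) goes through unchanged.
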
 
\begin{proof} In the proof of this theorem we follow the idea from \cite{GiaroKubale2} (Theorem 2.4). Let $\alpha$ be a $k_{1}$-improper interval $t_{1}$-coloring of $G$ and $\beta$ be a $k_{2}$-improper interval $t_{2}$-coloring of $H$, where $k_{1}=\muint(G)$ and $k_{2}=\muint(H)$.  

We define an edge coloring $\gamma$ of $G\square H$ as follows: for every $(u_{1},v_{1})(u_{2},v_{2})\in E(G\square H)$, let

$$\gamma\left((u_{1},v_{1})(u_{2},v_{2})\right)=
			\begin{cases}
				\alpha(u_{1}u_{2})+\underline{S}(v_{1},\beta)-1, & \text{if $v_{1}=v_{2}$ and $u_{1}u_{2}\in E(G)$},\\
				\beta(v_{1}v_{2})+\overline{S}(u_{1},\alpha), & \text{if $u_{1}=u_{2}$ and $v_{1}v_{2}\in E(H)$.}\\
			\end{cases}
$$

By the definition of $\gamma$, for every vertex $(u,v)\in V(G\square H)$, we have 

$$S((u,v),\gamma) = \left[\underline{S}(u,\alpha)+\underline{S}(v,\beta)-1,\overline{S}(u,\alpha)+\underline{S}(v,\beta)-1\right]\cup$$
$$\cup\left[\underline{S}(v,\beta)+\overline{S}(u,\alpha),\overline{S}(v,\beta)+\overline{S}(u,\alpha)\right]=\left[\underline{S}(u,\alpha)+\underline{S}(v,\beta)-1,\overline{S}(u,\alpha)+\overline{S}(v,\beta)\right].$$

Since for every vertex $(u,v)$ of $G\square H$, each color
can occur at most $\max\{k_{1},k_{2}\}$ times at $(u,v)$ 
under the coloring $\gamma$, this implies that $\gamma$ is a $\max\{k_{1},k_{2}\}$-improper interval $(t_{1}+t_{2})$-coloring of $G\square H$. 
Thus, $\muint(G\square H)\leq \max\left\{\muint(G),\muint(H)\right\}$.
\end{proof}

Clearly, this upper bound on the impropriety in Theorem \ref{cartesianproduct} is sharp for all Cartesian products of graphs when the factors are interval colorable. Let us note that there are graphs $G$ and $H$ such that $\muint(G\square H)< \max\left\{\muint(G),\muint(H)\right\}$. For example, 
if $G$ and $H$ are  both isomorphic to the Petersen graph, then, by Proposition \ref{prop:WheelReg}, $\muint(G)=\muint(H)=2$, but 
$\muint(G\square H)= 1$, since $G$ and $H$ contain perfect matchings \cite{PetDMGT}.
On the other hand, if we consider the Cartesian product of two odd cycles $C_{2m+1}\square C_{2n+1}$, then again, by Proposition \ref{prop:WheelReg}, $\muint(C_{2m+1})=\muint(C_{2n+1})=2$, but in this case $\muint(C_{2m+1}\square C_{2n+1})=2$, since $C_{2m+1}\square C_{2n+1}$ is Class 2. So, the upper bound on the impropriety in Theorem \ref{cartesianproduct} is also sharp for all Cartesian products of regular graphs when the factors and the Cartesian product of factors are Class 2.

		\section{The number of colors in an improper interval coloring}
		
Following \cite{Hudak}, we denote by $\hat t (G)$ the maximum
number of colors used in an improper interval edge coloring of $G$.
In \cite{Hudak}, the authors proved the following two results.

\begin{theorem} \cite{Hudak}
\label{triangle-free} For each connected triangle-free graph $G$ on $n$ vertices, 
$\hat{t}(G)\leq n-1$. Moreover, the upper bound is sharp.
\end{theorem}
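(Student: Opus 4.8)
The plan is to work throughout with a fixed improper interval coloring $\alpha$ of $G$ realizing the maximum, so that $t := \hat t(G)$ colors are used, and to reduce the statement to the purely combinatorial claim $n \geq t+1$. The first step is to normalize the color set. Since $G$ is connected and the colors at each vertex form an interval, any edge $uv$ of color $c$ forces $c \in S(u,\alpha)\cap S(v,\alpha)$, so the spectra of adjacent vertices overlap; walking along a path shows that the family $\{S(v,\alpha)\}_{v\in V(G)}$ has a connected "overlap graph", hence its union is a single interval of integers. Thus I may assume the colors used are exactly $[1,t]$, and it remains to show $t \leq n-1$, i.e.\ to exhibit $t+1$ distinct vertices.

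The core idea is a color-to-vertex injection along a geodesic. Choose $x$ with $\underline S(x,\alpha)=1$ and $y$ with $\overline S(y,\alpha)=t$, and let $P=(u_0,u_1,\dots,u_p)$ with $u_0=x$, $u_p=y$ be a shortest $x$--$y$ path. Setting $b_i=\overline S(u_i,\alpha)$ and tracking the running maximum $m_i=\max\{b_0,\dots,b_i\}$, one has $m_0=b_0$ and $m_p=t$, so that every color in $[1,t]$ is ``introduced'' either at $u_0$ (the colors in $[1,b_0]$) or at the first index $i$ where $m_i$ strictly increases (the colors in $(m_{i-1},m_i]$). Each color $c$ so introduced at a vertex $u_i$ is carried by an edge $u_i w_c$ with $\alpha(u_i w_c)=c$; because distinct colors at one vertex go to distinct neighbors (the graph is simple), the assignment $c\mapsto w_c$ is injective at each $u_i$. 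A collision $w_c=w_{c'}$ across two different path vertices $u_i,u_j$ would make $w_c$ a common neighbor of $u_i$ and $u_j$; for consecutive indices this produces a triangle $u_iu_{i+1}w_c$, which is exactly what triangle-freeness forbids. This is the place where the hypothesis is used, and it is what upgrades the generic $2n-3$-type estimate down to roughly $n$.

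The main obstacle, and the step I expect to require the most care, is the residual case $|i-j|=2$: two path vertices at geodesic distance two may legitimately share a neighbor without creating a triangle, so the injection can collide there. I would resolve this either by choosing $P$ to be as induced as possible and re-routing the assignment of the (at most one) doubly-claimed color, or by replacing the global argument with an induction that peels off such a configuration; in parallel one must secure the final ``$+1$'', i.e.\ that the $t$ witnesses together with a vertex not among them (for instance $u_0$, which cannot serve as $w_c$ for any $c$ introduced at $u_i$ with $i\geq 2$ by the shortest-path property) give $t+1$ distinct vertices rather than merely $t$. Once these two bookkeeping points are settled, $n\geq t+1$ follows.

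Finally, sharpness is immediate from the path $P_n$: coloring its $n-1$ edges by $1,2,\dots,n-1$ in order is a (proper) interval coloring using $n-1$ colors, so $\hat t(P_n)=n-1$; the complete bipartite graphs $K_{m,n}$, where the coloring $\alpha(a_ib_j)=i+j-1$ attains $m+n-1$ colors on $m+n$ vertices, furnish further connected triangle-free examples meeting the bound.
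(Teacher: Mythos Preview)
First, note that the paper does not actually prove this statement: Theorem~\ref{triangle-free} is quoted from \cite{Hudak} and used as a black box in the proof of Theorem~\ref{generalbound_new}. So there is no ``paper's own proof'' to compare against here.

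That said, your proposal has a genuine gap, and it is more serious than you acknowledge. The $|i-j|=2$ collision is not a residual bookkeeping issue; it defeats the color-to-vertex injection outright. Concretely, take $G=C_4$ on vertices $u_0,u_1,u_2,w$ (so $w$ is adjacent to $u_0$ and $u_2$) with $\alpha(u_0u_1)=1$, $\alpha(u_0w)=2$, $\alpha(u_1u_2)=2$, $\alpha(u_2w)=3$. This is an improper interval $3$-coloring with $\underline S(u_0,\alpha)=1$ and $\overline S(u_2,\alpha)=3$. The shortest path is $u_0u_1u_2$; colors $1,2$ are introduced at $u_0$ with witnesses $u_1,w$, and color $3$ is introduced at $u_2$ with witness $w$ again. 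Your map sends $\{1,2,3\}$ onto $\{u_1,w\}$, and adding $u_0$ as the extra ``$+1$'' vertex yields only three vertices, whereas $n=4$. The missing vertex $u_2$ is never produced by the scheme.

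More structurally, even granting the best possible accounting --- each off-path vertex can be adjacent to at most two path vertices (namely some $u_i$ and $u_{i+2}$, by geodesicity and triangle-freeness) and hence can absorb at most two colors --- your count gives only
\[
t \;\le\; p + 2|N|,
\]
where $p+1=|V(P)|$ and $N$ is the set of off-path neighbours of $P$. Combined with $n \ge (p+1)+|N|$ this yields $t \le 2n-p-2$, which is in the regime of the \emph{general} bound of Theorem~\ref{generalbound_old}, not $n-1$; indeed it matches $n-1$ only when $|N|=0$. The ``re-routing'' or ``peeling-off induction'' you allude to would therefore have to supply a substantially new idea rather than patch a single stray color: there can be as many as $|N|$ independent double-claims, and your phrase ``the (at most one) doubly-claimed color'' is simply false in general.

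Your normalization step and your sharpness examples ($P_n$ and $K_{m,n}$) are correct.
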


\begin{theorem} \cite{Hudak}
\label{generalbound_old} For each connected graph $G$ on $n$ vertices, 
$\hat{t}(G)\leq 2n-1$.
\end{theorem}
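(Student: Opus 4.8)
The plan is to first pin down the global structure of the colour set and then trace one well-chosen path to bound the number of colours. Fix an improper interval colouring $\alpha$ of $G$ realizing $\hat t(G)$ colours and normalize so that $1$ is the smallest colour used; write $t=\hat t(G)$. First I would record that for every edge $uv$ we have $\alpha(uv)\in S(u,\alpha)\cap S(v,\alpha)$, so the two spectra---each an interval of integers---overlap, and hence their union is again an interval. Running this observation along the edges of a spanning tree of the connected graph $G$ shows that $\bigcup_{v\in V(G)}S(v,\alpha)$ is a single interval of integers; since every colour used occurs in some spectrum, the set of colours is exactly $[1,t]$.

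Next I would choose a vertex $a$ with $\underline S(a,\alpha)=1$ and a vertex $b$ with $\overline S(b,\alpha)=t$, and take a shortest $(a,b)$-path $P=v_0v_1\cdots v_p$. Consecutive spectra overlap, so $\bigcup_{i=0}^{p}S(v_i,\alpha)$ is an interval containing both $1$ and $t$, whence it equals $[1,t]$. Counting colours with the overlaps subtracted gives $t\le\sum_{i=0}^p|S(v_i,\alpha)|-p$. Since $|S(v_i,\alpha)|\le d_G(v_i)$, and because $P$ is shortest it has no chords, the incidences split as $\sum_i d_G(v_i)=2p+q$, where $q$ counts the edges joining $P$ to $V(G)\setminus V(P)$; this yields $t\le p+q$. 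Finally, being a shortest path forces every off-path vertex to have all its neighbours on $P$ within three consecutive indices (two such neighbours at distance $\ge 3$ along $P$ would give a shortcut), so $q$ is linear in $|V(G)\setminus V(P)|$ and the whole bound becomes linear in $n$.

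The remaining, and hardest, step is to tighten the constant to exactly $2n-1$. The crude accounting above only gives a bound of order $3n$, because a single path-vertex of large degree contributes its full spectrum width, and an off-path vertex may be adjacent to three consecutive path-vertices. To reach $2n-1$ I expect one must use more than mere connectivity: the interval condition is self-limiting, since an off-path vertex adjacent to three consecutive path-vertices whose spectra are widely separated would itself need a wide spectrum, hence large degree, and so would force many further vertices. Quantifying this trade-off---morally, charging each unit of colour-range to a distinct left or right spectrum-endpoint among the $2n$ available, while handling the degenerate levels where $\{v:\overline S(v,\alpha)=j\}$ and $\{v:\underline S(v,\alpha)=j+1\}$ are simultaneously empty (as happens, e.g., for an evenly $2$-coloured even cycle)---is where the real work lies. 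I would also anticipate that the triangle-free bound $\hat t(G)\le n-1$ needs a separate, sharper argument, since forbidding triangles rules out exactly the off-path vertices adjacent to consecutive path-vertices and should collapse the window of three down to one.
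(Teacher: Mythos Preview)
The paper does not actually prove this theorem: it is quoted from \cite{Hudak} without proof, and the paper's own contribution is the sharper bound $\hat t(G)\le 2n-3$ (the next theorem). So there is no ``paper's own proof'' to compare against for this precise statement. That said, the technique the paper uses for $2n-3$ is entirely different from yours, and in its stripped-down form it already yields $2n-1$ in one line: take the bipartite double of $G$ (two copies of $V(G)$, with $u_iw_j$ and $u_jw_i$ whenever $v_iv_j\in E(G)$); the colouring $\alpha$ lifts to an improper interval colouring of this triangle-free graph on $2n$ vertices, and Theorem~\ref{triangle-free} gives $\hat t(G)\le 2n-1$ immediately. The refinement to $2n-3$ comes from adjoining a perfect matching $\{u_iw_i\}$ and observing that this buys two extra colours.

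Your proposal, by contrast, is not a proof of the stated bound: you yourself say that the path-tracing argument only gives a bound of order $3n$, and the remainder is a programme (``charging each unit of colour-range to a distinct spectrum endpoint'') rather than an argument. The specific difficulty you identify---an off-path vertex may attach to three consecutive path vertices---is real and does not go away with a more careful count along the path; the inequality $t\le p+q$ with $q\le 3(n-p-1)$ genuinely caps out near $3n$. The doubling trick sidesteps all of this by reducing to the triangle-free case, where the path-based argument \emph{does} work cleanly (no off-path vertex can be adjacent to two consecutive path vertices). So the missing idea in your approach is not a tighter local estimate but a global reduction; without it, the proposal does not reach $2n-1$.
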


Here we slightly improve the general upper bound from the last theorem.

\begin{theorem}
\label{generalbound_new} For each connected graph $G$ on $n$ vertices ($n\geq 2$), $\hat{t}(G)\leq 2n-3$.
\end{theorem}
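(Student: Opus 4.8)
The plan is to improve the bound $\hat t(G) \le 2n-1$ from Theorem~\ref{generalbound_old} to $\hat t(G)\le 2n-3$ by a more careful analysis of the spectra along a spanning structure of $G$. Let $\alpha$ be an improper interval edge coloring of the connected graph $G$ using exactly $\hat t(G)$ colors; we may assume the smallest color used is $1$ and the largest is $\hat t(G)$. The key observation is that at each vertex $v$ the spectrum $S(v,\alpha)$ is an interval $[\underline S(v,\alpha),\overline S(v,\alpha)]$, so moving along any edge $uv$ the intervals $S(u,\alpha)$ and $S(v,\alpha)$ overlap (they share the color $\alpha(uv)$). Consequently, along any path the maximum color can increase only by a bounded amount per step, and the total color range is controlled by a sum of interval lengths along a spanning tree.

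First I would fix a spanning tree $T$ of $G$ and root it, and track how $\overline S$ and $\underline S$ can grow as we traverse edges of $T$. Since adjacent spectra share a common color, for an edge $uv$ we have $\overline S(v,\alpha) \le \overline S(u,\alpha) + (|S(v,\alpha)|-1)$ and a symmetric bound for $\underline S$. The crude argument giving $2n-1$ essentially sums the interval lengths $|S(v,\alpha)|-1$ over a path realizing both extreme colors and adds the two endpoints. To sharpen this to $2n-3$, I would single out the two vertices $w_{\min}$ and $w_{\max}$ carrying the globally smallest and largest colors: at $w_{\min}$ the spectrum starts at color $1$, so its downward slack is zero, and at $w_{\max}$ the spectrum ends at $\hat t(G)$, so its upward slack is zero. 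I would then exploit that the path in $T$ between $w_{\min}$ and $w_{\max}$ must absorb the full range $\hat t(G)-1$, while each internal vertex on this path contributes at most its interval length minus one, and the two endpoints each save one unit because one of their directional increments is forced to be trivial. Carefully accounting for these two saved units at the endpoints, together with the fact that an edge contributes its color to two incident spectra, should reduce the bound by exactly $2$, yielding $\hat t(G)\le 2n-3$.

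The main obstacle I anticipate is making the ``two saved units'' argument rigorous without double-counting, particularly handling the degenerate cases: when the extremal path has very few vertices (e.g., $w_{\min}$ and $w_{\max}$ are adjacent or coincide with small-degree vertices), and when some vertex along the path has a singleton spectrum. For $n=2$ the bound reads $\hat t(G)\le 1$, which matches the single edge of $K_2$, so the base case is forced and must be checked directly. I would structure the proof as an induction or a direct telescoping inequality along the extremal path, being explicit that at $w_{\max}$ the increment $\overline S(w_{\max},\alpha)-\underline S(w_{\max},\alpha)$ is capped and that the final edge into $w_{\max}$ cannot push the maximum beyond what the interval structure at $w_{\max}$ permits. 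The delicate point is ensuring that the endpoint savings are genuinely \emph{independent} of the per-vertex interval-length bounds used along the interior, so that they can be subtracted without overlap; I expect this bookkeeping, rather than any deep structural idea, to be where the real work lies.
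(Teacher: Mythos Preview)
Your plan has a fundamental gap: the path-telescoping argument you describe does not yield a bound in terms of $n$ alone, so it cannot recover even the crude bound $\hat t(G)\le 2n-1$, let alone improve it to $2n-3$. Concretely, telescoping along a path $P=(v_1,\dots,v_k)$ from $w_{\min}$ to $w_{\max}$ gives
\[
\hat t(G)\;\le\;\sum_{i=1}^{k}|S(v_i,\alpha)|-(k-1)\;\le\;\sum_{i=1}^{k} d_G(v_i)-(k-1),
\]
and the right-hand side is governed by the \emph{degrees along $P$}, not by $n$. For example, in $K_n$ minus a perfect matching ($n$ even) every vertex has degree $n-2$; if $w_{\min}$ and $w_{\max}$ are the two non-adjacent matched vertices, the shortest path between them has three vertices and your inequality yields only $3(n-2)-2=3n-8$, which exceeds $2n-3$ for all $n>5$. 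Nothing in your outline excludes such configurations, and the proposed ``two saved units at the endpoints'' cannot close a gap that grows linearly in $n$. In short, the assertion that the $2n-1$ bound comes from summing interval lengths along a path is a misreading of the original argument, and your refinement is built on that misreading.

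The paper's proof takes an entirely different route and never analyzes paths in $G$. It constructs an auxiliary connected bipartite graph $H$ on $2n$ vertices (the bipartite double cover of $G$ together with the perfect matching $\{u_iw_i:1\le i\le n\}$) and transports $\alpha$ to an improper interval coloring of $H$: each edge $v_iv_j$ of $G$ contributes two edges of $H$ colored $\alpha(v_iv_j)+1$, and each matching edge $u_iw_i$ receives color $\overline S(v_i,\alpha)+2$; one matching edge is then recolored $1$. This produces an improper interval $(\hat t(G)+2)$-coloring of $H$, and since $H$ is triangle-free, Theorem~\ref{triangle-free} gives $\hat t(G)+2\le |V(H)|-1=2n-1$, hence $\hat t(G)\le 2n-3$. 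The entire improvement from $2n-1$ to $2n-3$ comes from the two extra colors manufactured by the added matching and the shift, not from any endpoint bookkeeping along a path in $G$.
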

\begin{proof}
Let $V(G)=\{v_{1},v_{2},\ldots,v_{n}\}$ and $\alpha$ be an improper interval
$\hat{t}(G)$-coloring of $G$.
 Define an auxiliary graph $H$ as follows:
\begin{center}
$V(H)=U\cup W$, where
\end{center}
\begin{center}
$U=\{u_{1},u_{2},\ldots,u_{n}\}$, $W=\{w_{1},w_{2},\ldots,w_{n}\}$
and
\end{center}
\begin{center}
$E(H)=\left\{u_{i}w_{j},u_{j}w_{i}\colon\,v_{i}v_{j}\in E(G), 1\leq i\leq n,1\leq j\leq n\right\}\cup \{u_{i}w_{i}\colon\,1\leq i\leq n\}$.
\end{center}

Clearly, $H$ is a bipartite graph with $\vert V(H)\vert = 2n$.\\

Define an edge-coloring $\beta$ of $H$ as follows:
\begin{description}
\item[(1)] for every edge $v_{i}v_{j}\in E(G)$, let $\beta (u_{i}w_{j})=\beta(u_{j}w_{i})=\alpha(v_{i}v_{j})+1$,

\item[(2)] for $i=1,2,\ldots,n$, let $\beta(u_{i}w_{i})=\overline S(v_{i},\alpha)+2$.
\end{description}

It is easy to see that $\beta$ is an edge-coloring of the graph $H$ with colors $2,3,\ldots,\hat{t}(G)+2$ and $\underline S(u_{i},\beta)=\underline
S(w_{i},\beta)$ for $i=1,2,\ldots,n$. We 
construct an improper interval
$(\hat{t}(G)+2)$-coloring of the graph $H$ by picking an edge
$u_{i_{0}}w_{i_{0}}$ with 
$\underline S\left(u_{i_{0}},\beta\right)=
\underline S\left(w_{i_{0}},\beta\right)=2$ and recoloring it 
with color $1$. The obtained coloring is an improper interval $(\hat{t}(G)+2)$-coloring of $H$. 
Since $H$ is a connected bipartite graph, by Theorem
\ref{triangle-free}, we have
\begin{center}
$\hat{t}(G)+2\leq \vert V(H)\vert -1 = 2n-1$,
\end{center}
thus
$\hat{t}(G)\leq 2n-3$.
\end{proof}

We note that the upper bound in the preceding theorem is sharp
by the example of a complete graph with only two vertices.

\end{document}